\newtheorem{thm}{Theorem}[section]
\newtheorem{prop}[thm]{Proposition}
\newtheorem{lem}[thm]{Lemma}
\newtheorem{cor}[thm]{Corollary}
\theoremstyle{definition}
\newtheorem{dfn}[thm]{Definition}
\newtheorem{rem}[thm]{Remark}
\numberwithin{equation}{section}
\DeclareMathOperator{\ev}{ev}
\DeclareMathOperator{\tr}{tr}
\DeclareMathOperator{\ad}{ad}
\DeclareMathOperator{\mult}{mult}
\newcommand{\ve}{\varepsilon}
\newcommand{\bbZ}{\mathbb{Z}}
\newcommand{\bbC}{\mathbb{C}}
\newcommand{\affY}{Y\big(\affsl\big)}
\newcommand{\frakg}{\mathfrak{g}}
\newcommand{\gl}{\mathfrak{gl}}
\newcommand{\fraksl}{\mathfrak{sl}}
\newcommand{\affsl}{\hat{\mathfrak{sl}}_N}
\newcommand{\bfid}{\mathbf{1}}
\begin{document}
\allowdisplaybreaks

\newcommand{\arXivNumber}{1805.01621}

\renewcommand{\PaperNumber}{020}

\FirstPageHeading

\ShortArticleName{Braid Group Action on Affine Yangian}

\ArticleName{Braid Group Action on Affine Yangian}

\Author{Ryosuke KODERA}

\AuthorNameForHeading{R.~Kodera}

\Address{Department of Mathematics, Graduate School of Science, Kobe University,\\ Kobe 657-8501, Japan}
\Email{\href{mailto:kodera@math.kobe-u.ac.jp}{kodera@math.kobe-u.ac.jp}}
\URLaddress{\url{http://www2.kobe-u.ac.jp/~kryosuke/}}

\ArticleDates{Received August 09, 2018, in final form February 27, 2019; Published online March 16, 2019}

\Abstract{We study braid group actions on Yangians associated with symmetrizable Kac--Moody Lie algebras. As an application, we focus on the affine Yangian of type A and use the action to prove that the image of the evaluation map contains the diagonal Heisenberg algebra inside $\hat{\mathfrak{gl}}_N$.}

\Keywords{affine Yangian; braid group action; evaluation map}

\Classification{17B10; 17B37; 17B67}

\section{Introduction}

The purpose of this paper is to introduce braid group actions on Yangians and study their properties. Moreover we give an application for the affine Yangian of type~A.

Braid group actions on Kac--Moody Lie algebras and their integrable representations are classical. Lusztig initiated the study of braid group actions on quantized enveloping algebras associated with Kac--Moody Lie algebras. See~\cite[notes on Part~VI]{MR1227098} for historical remarks. They are fundamental tools for the construction of the PBW bases of the quantized enveloping algebras of finite type in~\cite{MR1013053, MR1066560} (see also \cite{MR1081014,MR1120927,MR1265471}). Then Lusztig~\cite{MR1035415} used the PBW basis for the construction of the canonical basis in ADE type. The braid group action is important also in affine type. Beck~\cite{MR1301623} used it to construct the Drinfeld generators and proved the equivalence of two presentations of the quantized affine algebra.

\looseness=1 Another important family of quantum groups are Yangians which were introduced by Drinfeld. The Yangian $Y(\frakg)$ associated with a simple Lie algebra $\frakg$ contains the universal enveloping algebra $U(\frakg)$ as a subalgebra and it is easy to see that the braid group action on~$U(\frakg)$ extends to~$Y(\frakg)$. Although this is a trivial observation, it seems that properties of the braid group action on the Yangian have not been seriously studied. (A different kind of braid group actions on quantized affine algebras and Yangians are studied by Ding--Khoroshkin~\cite{MR1745710}. It differs from the action introduced in this paper and our results are independent of their work.)

\looseness=1 Let us give a remark on a difference between the situations for the quantized enveloping algebra and the Yangian. The Yangian $Y(\frakg)$ is known to be an additive degeneration of the quantized enveloping algebra $U_q(\hat{\frakg})$ of affine type. The braid group acting on $U_q(\hat{\frakg})$ is of affine type, while one acting on $Y(\frakg)$ is of finite type. Thus the symmetry of Yangian coming from this consideration is smaller than that of quantized enveloping algebra given by Lusztig.

We can define the Yangian associated with a symmetrizable Kac--Moody Lie algebra $\frakg$. Then we see that the braid group of $\frakg$ acts on it. In this paper we mainly focus on the affine Yangian~$\affY$ of type A and the general case is studied in Appendix~\ref{appendixA}. The affine Yangian~$\affY$ is a two-parameter deformation of the universal enveloping algebra of the universal central extension of $\mathfrak{sl}_N\big[s,t^{\pm 1}\big]$, and is related to many interesting objects; symmetry of the spin Calogero--Sutherland model~\cite{MR3898327, MR1724950}; Schur--Weyl type duality for degenerate double affine Hecke algebra~\cite{MR2199856}; quiver variety associated with the cyclic quiver~\cite{MR3898327, MR1818101}.

The affine Yangian $\affY$ contains $U\big(\affsl\big)$ as a subalgebra and hence it admits an action of the braid group associated with $\affsl$. We give a formula for the action on generators of degree one. We use it to show the compatibility of the braid group action and the coproduct on $\affY$ introduced by Guay~\cite{MR2323534}, Guay--Nakajima--Wendlandt~\cite{MR3861718}. This compatibility result holds for the Yangian of arbitrary finite or affine type except for $A_1^{(1)}$ and $A_2^{(2)}$ as discussed in Appendix~\ref{appendixA}. See Propositions~\ref{prop:compatibility_coproduct} and~\ref{prop:compatibility_coproduct2} for precise statements.

We hope to apply the braid group action to the study of structure theory and representation theory of the affine Yangian. As a first step, we consider the following in the second half of the paper.

Guay~\cite{MR2323534} introduced an evaluation map for the affine Yangian $\affY$ whose target space is a certain completion of $U\big(\hat{\gl}_N\big)$. By its definition, the image contains~$U\big(\affsl\big)$, and we have expected that it contains $U\big(\hat{\gl}_N\big)$. We give an affirmative answer to this question, under a certain assumption on the parameters, by constructing elements of~$\affY$ whose images by the evaluation map coincide with generators of the diagonal Heisenberg algebra inside $\hat{\gl}_N$ (Theorem~\ref{thm:image}). To construct such elements, we use the braid group action. Certainly it is desirable to lift the Heisenberg subalgebra inside the affine Yangian and we will come back to this problem in a~future.

Our main result Theorem~\ref{thm:image} implies that the pull-back of an irreducible $\hat{\gl}_N$-module by the evaluation map is irreducible as a module of~$\affY$. We determine the highest weights of the evaluation modules in~\cite{kodera_evaluation}.

The plan of this paper is as follows. In Section~\ref{sec:affine_Yangian}, we define the affine Yangian $\affY$ and recall some automorphisms and the coproduct. In Section~\ref{section3}, we introduce the braid group action and study its properties. In particular, the compatibility with the coproduct is proved in Section~\ref{subsec:compatibility_coproduct}. We construct elements of $\affY$ whose images by the evaluation map coincide with Heisenberg generators in Section~\ref{section4}. In Appendix~\ref{appendixA}, we consider the braid group action on the Yangian associated with a symmetrizable Kac--Moody Lie algebra. Then we give a proof of the compatibility with the coproduct when it is known to be well-defined.

\section{Affine Yangian}\label{sec:affine_Yangian}

\subsection{Affine Yangian}

Fix an integer $N \geq 3$ throughout the paper. We use the notation $\{x,y\}=xy+yx$.

\begin{dfn}\label{dfn:Yangian}The affine Yangian $\affY$ is the algebra over $\bbC$ generated by $x_{i,r}^{+}$, $x_{i,r}^{-}$, $h_{i,r}$ ($i \in \mathbb{Z} / N\mathbb{Z}$, $r \in \mathbb{Z}_{\geq 0}$) with parameters $\ve_1, \ve_2 \in \bbC$ subject to the relations:
\begin{gather*}
[h_{i,r}, h_{j,s}] = 0, \qquad \big[x_{i,r}^{+}, x_{j,s}^{-}\big] = \delta_{ij} h_{i, r+s}, \qquad \big[h_{i,0}, x_{j,r}^{\pm}\big] = \pm a_{ij} x_{j,r}^{\pm},\\
\big[h_{i, r+1}, x_{j, s}^{\pm}\big] - \big[h_{i, r}, x_{j, s+1}^{\pm}\big]= \pm a_{ij} \dfrac{\varepsilon_1 + \varepsilon_2}{2} \big\{h_{i, r}, x_{j, s}^{\pm}\big\} - m_{ij} \dfrac{\varepsilon_1 - \varepsilon_2}{2} \big[h_{i, r}, x_{j, s}^{\pm}\big],\\
\big[x_{i, r+1}^{\pm}, x_{j, s}^{\pm}\big] - \big[x_{i, r}^{\pm}, x_{j, s+1}^{\pm}\big]= \pm a_{ij}\dfrac{\varepsilon_1 + \varepsilon_2}{2} \big\{x_{i, r}^{\pm}, x_{j, s}^{\pm}\big\}
- m_{ij} \dfrac{\varepsilon_1 - \varepsilon_2}{2} \big[x_{i, r}^{\pm}, x_{j, s}^{\pm}\big],\\
\sum_{w \in \mathfrak{S}_{1 - a_{ij}}}\big[x_{i,r_{w(1)}}^{\pm}, \big[x_{i,r_{w(2)}}^{\pm}, \dots, \big[x_{i,r_{w(1 - a_{ij})}}^{\pm}, x_{j,s}^{\pm}\big]\dots\big]\big] = 0, \qquad i \neq j,
\end{gather*}
where
\begin{gather*}
a_{ij} =
\begin{cases}
\hphantom{-}2 &\text{if } i=j, \\
-1 &\text{if } i=j \pm 1, \\
\hphantom{-}0 &\text{otherwise,}
\end{cases}\qquad
m_{ij} =
\begin{cases}
\hphantom{-}1 &\text{if } j=i-1, \\
-1 &\text{if } j=i+1, \\
\hphantom{-}0 &\text{otherwise.}
\end{cases}
\end{gather*}
\end{dfn}

\begin{rem}In this paper, we define $\hat\fraksl_N = \fraksl_N \otimes \bbC\big[t,t^{-1}\big] \oplus \bbC c$ without the degree operator. In \cite{MR3861718}, the Yangian $Y_{\hbar}(\frakg)$ of affine type is defined to be an algebra containing the degree opera\-tor~$d$ and one without~$d$ is denoted by~$Y_{\hbar}(\frakg')$. In particular, the algebra defined in Definition~\ref{dfn:Yangian} coincides with $Y_{\hbar,\ve}(\frakg')$ in the notation of \cite[Definition~7.1]{MR3861718}.
\end{rem}

The subalgebra generated by $x_{i,0}^+$, $x_{i,0}^-$, $h_{i,0}$ ($i \in \bbZ / N\bbZ$) is isomorphic to $U\big(\affsl\big)$ (See \cite[Theorem~6.1]{MR2323534} for $N \geq 4$ and \cite[Theorem~6.9]{GRW} in general). We write $x_{i}^{\pm} = x_{i,0}^{\pm}$, $h_{i} = h_{i,0}$ and identify them with the standard Chevalley generators of $\affsl$. Let $\{\alpha_i\}_{i \in \bbZ/N\bbZ}$ be the simple roots of~$\affsl$. The null root $\delta$ is given by $\delta=\sum\limits_{i=0}^{N-1} \alpha_i$. Let $\theta = \sum\limits_{i=1}^{N-1} \alpha_i$ be the highest root of~$\mathfrak{sl}_N$ and~$h_{-\theta}$ the coroot corresponding to $-\theta$. We denote by $\hat{\Delta}$, $\hat{\Delta}_+$, and $\hat{\Delta}_+^{\mathrm{re}}$ the set of roots, positive roots, and positive real roots for $\affsl$, respectively. We need to consider $\affsl \oplus \bbC d$ with the degree operator to deal with the coproduct on~$\affY$. Fix a nondegenerate invariant symmetric bilinear form $(\, ,\, )$ on $\affsl \oplus \bbC d$ such that $(x_i^+,x_i^-)=1$ and denote the induced bilinear form on the dual of the Cartan subalgebra by the same letter.

Let $\hat{W}$ be the Weyl group of $\affsl$ generated by the simple reflections $s_i$ ($i \in \bbZ/N\bbZ$). We denote by~$s_{\theta}$ the reflection corresponding to the highest root $\theta$. For each $\alpha \in \hat{\Delta}$, we assign the translation element $t_{\alpha}$ in $\hat{W}$. For example, we have $s_0 s_{\theta} = t_{\theta}$. The action of $t_{\alpha}$ on the root lattice $Q=\sum\limits_{i=1}^{N-1} \bbZ \alpha_i$ of $\mathfrak{sl}_N$ is given by $t_{\alpha}(\lambda) = \lambda - (\alpha,\lambda)\delta$ for $\lambda \in Q$.

Set $\hbar = \ve_1 + \ve_2$ and $\tilde{h}_{i,1} = h_{i,1} - \frac{\hbar}{2} h_{i}^2$. We can deduce the following identities directly from the defining relations of $\affY$.
\begin{lem}\label{lem:relations}
We have
\begin{gather*}
\big[h_{i,1}, x_{i}^{\pm}\big] = \pm2 x_{i,1}^{\pm} \pm \hbar \big\{h_i,x_i^{\pm}\big\}, \qquad \big[\tilde{h}_{i,1}, x_{i}^{\pm}\big] = \pm2 x_{i,1}^{\pm}, \qquad \big[x_{i,1}^{\pm},x_i^{\pm}\big] = \pm \hbar \big(x_i^{\pm}\big)^2,\\
\big[h_{i,1}, x_{j}^{\pm}\big] = \mp \left( x_{j,1}^{\pm} + \dfrac{\hbar}{2} \big\{h_i,x_j^{\pm}\big\} - m_{ij} \dfrac{\ve_1-\ve_2}{2}x_j^{\pm} \right) \qquad \text{if} \quad i=j \pm 1.
\end{gather*}
\end{lem}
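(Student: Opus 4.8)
The plan is to obtain each of the four identities by specializing the defining relations of $\affY$ to the lowest degrees $r=s=0$ and simplifying using the explicit values of $a_{ij}$ and $m_{ij}$. First I would derive the first identity. Applying the relation governing $[h_{i, r+1}, x_{j, s}^{\pm}] - [h_{i, r}, x_{j, s+1}^{\pm}]$ with $i=j$ and $r=s=0$, and using $a_{ii}=2$, $m_{ii}=0$, yields $[h_{i,1}, x_i^{\pm}] - [h_i, x_{i,1}^{\pm}] = \pm\hbar\{h_i, x_i^{\pm}\}$. Since $[h_i, x_{i,1}^{\pm}] = \pm 2 x_{i,1}^{\pm}$ by the relation $[h_{i,0}, x_{j,r}^{\pm}] = \pm a_{ij} x_{j,r}^{\pm}$, the first identity follows immediately.

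For the second identity, I would expand $[\tilde{h}_{i,1}, x_i^{\pm}] = [h_{i,1}, x_i^{\pm}] - \frac{\hbar}{2}[h_i^2, x_i^{\pm}]$. Here $[h_i^2, x_i^{\pm}] = \pm 2\{h_i, x_i^{\pm}\}$, which follows from $[h_i, x_i^{\pm}] = \pm 2 x_i^{\pm}$ by the Leibniz rule for the commutator. Combining this with the first identity, the two $\hbar\{h_i, x_i^{\pm}\}$ contributions cancel and leave precisely $\pm 2 x_{i,1}^{\pm}$, which is the definition of $\tilde{h}_{i,1}$ at work.

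For the third identity, I would take the relation for $[x_{i, r+1}^{\pm}, x_{j, s}^{\pm}] - [x_{i, r}^{\pm}, x_{j, s+1}^{\pm}]$ at $i=j$, $r=s=0$: its left-hand side collapses to $2[x_{i,1}^{\pm}, x_i^{\pm}]$, while the right-hand side equals $\pm 2\hbar (x_i^{\pm})^2$ because $\{x_i^{\pm}, x_i^{\pm}\} = 2(x_i^{\pm})^2$ and $m_{ii}=0$; dividing by $2$ gives the claim. The fourth identity is entirely analogous: specializing the $h$-$x$ relation at $r=s=0$ with $i = j \pm 1$, so that $a_{ij} = -1$ and $m_{ij} = \mp 1$, and substituting $[h_i, x_{j,1}^{\pm}] = \mp x_{j,1}^{\pm}$ together with $[h_i, x_j^{\pm}] = \mp x_j^{\pm}$, reproduces the stated expression after collecting terms.

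All four computations are direct and there is no genuine obstacle; the only points requiring care are the consistent tracking of the coupled signs $\pm$ and $\mp$ throughout, and remembering that $m_{ii}=0$ annihilates the $\frac{\ve_1-\ve_2}{2}$ contributions in the three diagonal cases while the $m_{ij}$ term survives in the off-diagonal fourth identity.
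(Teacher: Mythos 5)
Your proposal is correct and is exactly the computation the paper intends: the paper omits the proof, remarking only that the identities follow directly from the defining relations, and your specializations at $r=s=0$ with the values $a_{ii}=2$, $m_{ii}=0$, $a_{ij}=-1$ carry this out accurately, including the cancellation of the $\hbar\{h_i,x_i^{\pm}\}$ terms in $[\tilde h_{i,1},x_i^{\pm}]$. The only cosmetic slip is the parenthetical ``$m_{ij}=\mp 1$'' in the fourth case (the correct correlation with $i=j\pm1$ is $m_{ij}=\pm1$), but this is immaterial since your final formula keeps $m_{ij}$ symbolic.
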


\subsection{Automorphisms}

We introduce algebra (anti-)automorphisms of $\affY$ which will be used later.

Let $\omega$ be the algebra anti-automorphism of $\affY$ defined by $x_{i,r}^{\pm} \mapsto x_{i,r}^{\mp}$, $h_{i,r} \mapsto h_{i,r}$. Here anti-automorphism means that the invertible operator $\omega$ satisfies $\omega(XY)=\omega(Y)\omega(X)$. It is easy to check that it is well-defined. The following algebra automorphism corresponds to a~rotation of the Dynkin diagram.
\begin{prop}[{\cite[Lemma~3.5]{MR2199856}}] The assignment
\begin{gather*}
x_{i,r}^{\pm} \mapsto \sum_{s=0}^r \dbinom{r}{s} \ve_2^{r-s} x_{i-1,s}^{\pm}, \qquad h_{i,r} \mapsto \sum_{s=0}^r \dbinom{r}{s} \ve_2^{r-s} h_{i-1,s}
\end{gather*}
gives an algebra automorphism $\rho$ of $\affY$.
\end{prop}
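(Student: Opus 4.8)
The plan is to factor $\rho$ into two commuting maps and treat each separately. Write $\rho = \sigma \circ \tau$, where $\tau$ is the spectral-parameter shift by $\ve_2$,
\begin{gather*}
\tau\colon x_{i,r}^{\pm} \mapsto \sum_{s=0}^{r} \binom{r}{s}\ve_2^{r-s} x_{i,s}^{\pm}, \qquad h_{i,r} \mapsto \sum_{s=0}^{r}\binom{r}{s}\ve_2^{r-s} h_{i,s}
\end{gather*}
(fixing the Dynkin index $i$), and $\sigma$ is the rotation $\sigma(x_{i,r}^{\pm})=x_{i-1,r}^{\pm}$, $\sigma(h_{i,r})=h_{i-1,r}$ (fixing the mode $r$). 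One checks directly that the composite reproduces the stated assignment, and since $\tau$ and $\sigma$ act on disjoint pieces of the data they commute; it therefore suffices to prove that each is a well-defined automorphism.

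For $\sigma$ the only point is that every defining relation is invariant under the simultaneous substitution $i\mapsto i-1$, $j\mapsto j-1$, which reduces to the identities $a_{i-1,j-1}=a_{ij}$ and $m_{i-1,j-1}=m_{ij}$; both are immediate since $a_{ij}$ and $m_{ij}$ depend only on the difference of the indices in $\bbZ/N\bbZ$. As $\sigma^N=\id$, it is invertible. For $\tau$ I would verify the relations one at a time. The relation $[h_{i,r},h_{j,s}]=0$ is preserved trivially; $\big[x_{i,r}^{+},x_{j,s}^{-}\big]=\delta_{ij}h_{i,r+s}$ follows from the Vandermonde identity $\sum_{s'+s''=t}\binom{r}{s'}\binom{s}{s''}=\binom{r+s}{t}$; and $\big[h_{i,0},x_{j,r}^{\pm}\big]=\pm a_{ij}x_{j,r}^{\pm}$ holds because $\tau$ fixes $h_{i,0}$ and $x_{i,0}^{\pm}$ and acts linearly in the modes. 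The Serre relations are multilinear in the $x^{\pm}$ with vanishing right-hand side, so applying $\tau$ to each factor expands the expression into a binomial-weighted sum of Serre expressions, each of which vanishes.

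The main obstacle is the pair of difference relations, those for $\big[h_{i,r+1},x_{j,s}^{\pm}\big]-\big[h_{i,r},x_{j,s+1}^{\pm}\big]$ and for $\big[x_{i,r+1}^{\pm},x_{j,s}^{\pm}\big]-\big[x_{i,r}^{\pm},x_{j,s+1}^{\pm}\big]$, where the anticommutator term interacts with the binomial shift. Here the mechanism is clean: applying $\tau$ to the left-hand side and using Pascal's rule $\binom{r+1}{a}=\binom{r}{a}+\binom{r}{a-1}$ collapses the difference to a sum over modes $(a,b)$ with coefficient $\binom{r}{a-1}\binom{s}{b}-\binom{r}{a}\binom{s}{b-1}$ (the powers of $\ve_2$ in the two terms agreeing), while applying $\tau$ to the right-hand side and substituting the defining relation itself at mode $(a,b)$—which is legitimate because the coefficients $a_{ij}$, $m_{ij}$, $\hbar/2$ and $(\ve_1-\ve_2)/2$ are independent of the modes—reproduces exactly the same combination after reindexing $a\mapsto a+1$ and $b\mapsto b+1$. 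The identical argument handles the $x^{\pm}$–$x^{\pm}$ relation. Finally $\tau$ is invertible since the shifts form a one-parameter group, so that $\tau$ composed with the shift by $-\ve_2$ is the identity; composing with $\sigma^{-1}$ gives $\rho^{-1}$, and $\rho=\sigma\circ\tau$ is the desired automorphism of $\affY$.
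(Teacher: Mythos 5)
Your proof is correct, but be aware that the paper offers no proof of this statement at all: it is imported as a citation of Lemma~3.5 of \cite{MR2199856}, where the automorphism is written in Guay's generators $X_{i,r}^{\pm}$, $H_{i,r}$ and is transported to the present generators via the node-dependent rescaling recorded in the Remark immediately following the proposition. Your factorization $\rho=\sigma\circ\tau$ into the index rotation $\sigma$ and the global spectral shift $\tau$ is therefore a genuinely self-contained substitute, and both halves check out. For $\sigma$ the key point is exactly the one you identify: in the presentation of Definition~\ref{dfn:Yangian} every coefficient $a_{ij}$, $m_{ij}$ depends only on $i-j$ in $\bbZ/N\bbZ$, so the relations are manifestly rotation-invariant (this symmetry is special to these generators; in Guay's normalization it fails, which is why the $\ve_2$-shift appears in his formula in the first place --- in the present generators the naive rotation $\sigma$ alone is already an automorphism, and $\rho$ is the particular combination matching Guay's). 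For $\tau$ your Pascal/Vandermonde mechanism is the standard verification that translation of the spectral parameter preserves relations depending only on differences of modes; the computation does close up, since after the reindexings $a\mapsto a+1$, $b\mapsto b+1$ the left-hand side collapses to $\sum_{a,b}\binom{r}{a}\binom{s}{b}\ve_2^{r+s-a-b}\big(\big[h_{i,a+1},x_{j,b}^{\pm}\big]-\big[h_{i,a},x_{j,b+1}^{\pm}\big]\big)$, whose weight agrees with the weight $\ve_2^{(r-a)+(s-b)}$ produced on the right-hand side. Two pieces of bookkeeping are worth making explicit in a write-up, though neither is a gap: the boundary terms in Pascal's rule are handled by the convention $\binom{r}{a}=0$ for $a<0$ or $a>r$, and in the Serre relation the multilinear expansion must be regrouped by the relabelling $a_k=c_{w(k)}$ before the inner sum over $w$ is recognized as a Serre expression at fixed modes.
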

In particular we have $\rho(x_{i,1}^{\pm}) = x_{i-1,1}^{\pm} + \ve_2 x_{i-1}^{\pm}$.
\begin{rem}The relation between generators $X_{i,r}^{\pm}$, $H_{i,r}$ with parameters $\lambda$, $\beta$ used in~\cite{MR2199856,MR2323534} and ours are as follows:
\begin{gather*}
X_{0,r}^{\pm} = \sum_{s=0}^r \dbinom{r}{s} \left( \dfrac{N}{4}(\ve_1-\ve_2)\right)^{r-s} x_{0,s}^{\pm},\qquad H_{0,r} = \sum_{s=0}^r \dbinom{r}{s} \left( \dfrac{N}{4}(\ve_1-\ve_2)\right)^{r-s} h_{0,s},\\
X_{i,r}^{\pm} = \sum_{s=0}^r \dbinom{r}{s} \left( \dfrac{i}{2}(\ve_1-\ve_2)\right)^{r-s} x_{i,s}^{\pm}, \qquad H_{i,r} = \sum_{s=0}^r \dbinom{r}{s} \left( \dfrac{i}{2}(\ve_1-\ve_2)\right)^{r-s} h_{i,s}, \qquad i \neq 0,\\
\lambda = \hbar, \qquad \beta = \frac{1}{2} \hbar - \dfrac{N}{4}(\ve_1-\ve_2).
\end{gather*}
\end{rem}

\subsection{Coproduct}

A formula for the coproduct on the affine Yangian $\affY$ was stated in \cite{MR2323534}. Guay--Nakajima--Wendlandt~\cite{MR3861718} gave a detailed proof of the well-definedness. To recall it, we consider a bigger algebra $Y\big(\affsl \oplus \bbC d\big)$, which is generated by $x_{i,r}^{\pm}$, $h_{i,r}$, $d$ with defining relations given in \cite[equation~(2.8)]{MR3861718}. Moreover we need certain completions $Y\big(\affsl \oplus \bbC d\big) \widehat{\otimes} Y\big(\affsl \oplus \bbC d\big)$ and $\affY \widehat{\otimes} \affY$ of the tensor products since the coproduct involves infinite sums. See \cite[Section~5]{MR3861718} for the precise definition of the completion.

We define the half Casimir operator $\Omega_+$ as follows. Let $\{u_k\}$ be a $\bbC$-basis of the Cartan subalgebra of $\affsl \oplus \bbC d$ and $\big\{u^k\big\}$ its dual basis with respect to the nondegenerate bilinear form~$(\, ,\, )$. Let $\big\{x_{\alpha}^{(k)}\big\}_{\alpha \in \hat{\Delta}, 1 \leq k \leq \mult \alpha}$ be a root vector basis satisfying $\big(x_{\alpha}^{(k)},x_{-\alpha}^{(l)}\big) = \delta_{k,l}$. Here $\mult \alpha$ denotes the dimension of the root space corresponding to~$\alpha$. We take simple root vectors as $x_{\pm \alpha_i}^{(1)} = x_i^{\pm}$. Put
\begin{gather*}
\Omega_+ = \sum_{k} u^k \otimes u_k + \sum_{\substack{\alpha \in \hat{\Delta}_+ \\ 1 \leq k \leq \mult \alpha}} x_{-\alpha}^{(k)} \otimes x_{\alpha}^{(k)}
\end{gather*}
as an element of $Y\big(\affsl \oplus \bbC d\big) \widehat{\otimes} Y\big(\affsl \oplus \bbC d\big)$.

Define a $\bbC$-linear operator $\square$ on $\affY$ by $\square(X) = X \otimes 1 + 1 \otimes X$. Note that $\square$ is not an algebra homomorphism, but satisfies $\square([X,Y]) = [\square(X), \square(Y)]$.
\begin{thm}[{\cite[Definition~4.6, Theorem~4.9, Proposition~5.18, Section~7]{MR3861718}}]\label{thm:definition_of_coproduct} There exists an algebra homomorphism $\Delta \colon \affY \to \affY \widehat{\otimes} \affY$ uniquely determined by
\begin{gather*}
\Delta(X) = \square(X) \qquad \text{for} \quad X=x_{i}^{\pm}, h_i,\\
\Delta\big(x_{i,1}^+\big) = \square(x_{i,1}^+) - \hbar \big[1 \otimes x_i^+, \Omega_+\big],\\
\Delta\big(x_{i,1}^-\big) = \square(x_{i,1}^-) + \hbar \big[x_i^- \otimes 1, \Omega_+\big],\\
\Delta\big(\tilde{h}_{i,1}\big)= \square(\tilde{h}_{i,1}) + \hbar [h_i \otimes 1, \Omega_+].
\end{gather*}
\end{thm}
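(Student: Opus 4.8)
\emph{Plan.} The plan is to prove uniqueness first and then existence. For uniqueness, I would show that $\affY$ is generated as an algebra by the finite set $\{x_i^{\pm}, h_i, \tilde{h}_{i,1}\}_{i \in \bbZ/N\bbZ}$. Indeed, Lemma~\ref{lem:relations} gives $x_{i,1}^{\pm} = \pm\frac{1}{2}\big[\tilde{h}_{i,1}, x_i^{\pm}\big]$, and the current relations of Definition~\ref{dfn:Yangian} allow one to express every $x_{i,r+1}^{\pm}$ and $h_{i,r+1}$ through iterated brackets of degree-$\leq 1$ generators by induction on $r$. Since $\Delta$ is required to be an algebra homomorphism whose values on this generating set are prescribed by the formulas in the statement, it is unique once shown to exist.

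For existence, I would define $\Delta$ on the generators by the stated formulas, extend multiplicatively, and verify that the defining relations of $\affY$ are sent to identities in $\affY \widehat{\otimes} \affY$. A preliminary step is to make the formulas meaningful: using a PBW/triangular decomposition one checks that $\Omega_+$ and each correction term $\big[a \otimes 1 + 1 \otimes a, \Omega_+\big]$ lie in the completion $\affY \widehat{\otimes} \affY$ and that the infinite sum over $\hat{\Delta}_+$ converges there, so that all subsequent commutator manipulations are legitimate. The relations among the degree-$0$ generators $x_i^{\pm}, h_i$ hold automatically, because on this subalgebra $\Delta$ coincides with the standard cocommutative coproduct of $U\big(\affsl\big)$.

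The heart of the argument is the verification of the relations involving the degree-$1$ generators, and this rests on the commutation properties of $\Omega_+$ with $\square(x)$ for $x \in \affsl$. The key input is the invariance of the Casimir element attached to $(\,,\,)$: writing $\Omega = \Omega_+ + \Omega_-$ with $\Omega_- = \sum_{\alpha \in \hat{\Delta}_+,\, k} x_{\alpha}^{(k)} \otimes x_{-\alpha}^{(k)}$, one has $\big[\square(x), \Omega\big] = 0$ for all $x$, hence $\big[\square(x), \Omega_+\big] = -\big[\square(x), \Omega_-\big]$. A direct computation then gives $\big[\square(h_i), \Omega_+\big] = 0$, so that $\Delta(h_i) = \square(h_i)$ is consistent, together with explicit formulas for $\big[\square(x_i^{\pm}), \Omega_+\big]$. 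Substituting these into $\Delta$ applied to the relations $\big[h_{i,0}, x_{j,1}^{\pm}\big] = \pm a_{ij} x_{j,1}^{\pm}$, to the deformed relations carrying the $\frac{\hbar}{2}\{h_i, x_j^{\pm}\}$ and $m_{ij}\frac{\ve_1-\ve_2}{2}$ corrections, and to the brackets $\big[x_{i,1}^{\pm}, x_j^{\pm}\big]$ and $\big[\tilde{h}_{i,1}, x_j^{\pm}\big]$, one checks that each relation is preserved; an induction on $r$ then upgrades these to the full current relations.

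The main obstacle will be twofold. First, the Serre relations $\sum_{w \in \mathfrak{S}_{1-a_{ij}}} \big[x_{i,r_{w(1)}}^{\pm}, [\dots, x_{j,s}^{\pm}]\dots\big] = 0$ for $i \neq j$: applying $\Delta$ generates a large number of cross terms mixing the two tensor factors, and their cancellation requires combining the quadratic Casimir identities above with the finite-type Serre relations inside each factor. Second, and specific to the affine setting, is the convergence bookkeeping: since $\Omega_+$ is an infinite sum, every reordering and telescoping in the inductive step must be justified inside the completed tensor product, which is exactly the technical role played by the completion of \cite{MR3861718}. The hypothesis $N \geq 3$ is used here, as the low-rank cases $A_1^{(1)}$ and $A_2^{(2)}$ break these Casimir--Serre cancellations.
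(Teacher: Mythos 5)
The paper does not prove this statement: it is imported verbatim from Guay--Nakajima--Wendlandt \cite{MR3861718}, and the only ``proof'' in the paper is the citation. So the comparison here is between your outline and the external argument it points to. Your plan does follow the broad shape of that argument (uniqueness from generation in degrees $\leq 1$; existence by checking the defining relations, with $\Omega_+$ controlled through the invariance of the Casimir and the completion handling convergence), and the uniqueness half is essentially complete as you state it.

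For existence, however, two load-bearing steps are asserted rather than supplied. First, ``an induction on $r$ then upgrades these to the full current relations'' hides the real issue: the presentation in Definition~\ref{dfn:Yangian} imposes relations for \emph{all} $r,s\geq 0$, so defining $\Delta$ on the degree-$\leq 1$ generators and checking the low-degree relations only suffices if one first knows a Levendorskii-type minimalistic presentation of $\affY$ (that the finitely many relations in degrees $\leq 1$ imply all the others). That reduction is a separate nontrivial theorem, proved in \cite{MR3861718} and \cite{GRW}, and without it your induction has nothing to induct on --- the higher relations are axioms, not consequences, until that presentation is established. Second, the Casimir step is more delicate in affine type than your sketch acknowledges: the invariant bilinear form is nondegenerate only on $\affsl\oplus\bbC d$, so the full Casimir $\Omega=\Omega_++\Omega_-$ and the identity $[\square(x),\Omega]=0$ live in an algebra containing $d$, the imaginary root spaces $\mathop{\mathrm{mult}}\alpha=N-1$ contribute to $\Omega_+$, and one must check separately that each correction term $[1\otimes x_i^+,\Omega_+]$, $[x_i^-\otimes 1,\Omega_+]$, $[h_i\otimes 1,\Omega_+]$ nevertheless lands in $\affY\widehat{\otimes}\affY$ without $d$ (this is exactly what the paper's remark after the theorem records, and for $[h_i\otimes 1,\Omega_+]$ it uses $(\alpha_i,\delta)=0$ to kill the imaginary-root contribution). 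Finally, the Serre-relation verification, which you correctly identify as the heart of the matter, is only named; as it stands the proposal is a credible roadmap to the proof in \cite{MR3861718} rather than a proof.
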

\begin{rem}Explicitly we have
\begin{gather*}
[1 \otimes x_i^+, \Omega_+] = -h_i \otimes x_i^+ + \sum_{\substack{\alpha \in \hat{\Delta}_+\\ 1 \leq k \leq \mult \alpha}} x_{-\alpha}^{(k)} \otimes \big[x_i^+, x_{\alpha}^{(k)}\big],
\\
[x_i^- \otimes 1, \Omega_+] = x_i^- \otimes h_i + \sum_{\substack{\alpha \in \hat{\Delta}_+\\ 1 \leq k \leq \mult \alpha}} \big[x_i^-, x_{-\alpha}^{(k)}\big] \otimes x_{\alpha}^{(k)},
\\
[h_i \otimes 1, \Omega_+] = - \sum_{\alpha \in \hat{\Delta}_+^{\mathrm{re}}} (\alpha_i,\alpha)\, x_{-\alpha}^{(1)} \otimes x_{\alpha}^{(1)}.
\end{gather*}
Hence the target of $\Delta$ is $\affY \widehat{\otimes} \affY$ without~$d$.
\end{rem}

\section{Braid group action}\label{section3}

We define automorphisms $T_i$ $(i \in \bbZ/N\bbZ)$ of the affine Yangian and study their properties.

\subsection{Definition}

Since the adjoint actions of $x_i^{\pm}$ on $\affY$ are locally nilpotent, the operators $\exp\ad x_i^{\pm}$ are well defined by
\begin{gather*}
\exp\ad x_i^{\pm} = \sum_{n=0}^{\infty} \dfrac{1}{n!} \big(\ad x_i^{\pm}\big)^n.
\end{gather*}
These are algebra automorphisms of $\affY$ as $\ad x_i^{\pm}$ are derivations. We define an algebra automorphism $T_i$ of $\affY$ for each $i \in \bbZ/N\bbZ$ by
\begin{gather*}
T_i = \exp\ad x_i^{+} \exp\ad (-x_i^{-}) \exp\ad x_i^{+}.
\end{gather*}
This operator appears in \cite{MR3861718} and is used to construct real root vectors of Yangians.

\subsection{Braid relations}

A proof of the following proposition is exactly the same as one for the fact that $\{T_i\}$ satisfy the braid relations as automorphisms of $U\big(\affsl\big)$. We give a proof for the sake of completeness.

\begin{prop}\label{prop:braid}The operators $\{T_i\}$ satisfy the braid relations. That is, we have
\begin{gather*}
T_i T_j = T_j T_i \quad \text{if $a_{ij}=0$},\qquad T_i T_j T_i = T_j T_i T_j \quad \text{if $a_{ij}=-1$}.
\end{gather*}
\end{prop}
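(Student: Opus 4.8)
The plan is to exploit the fact that each $T_i$ is built entirely from the adjoint action of the degree-zero subalgebra $U\big(\affsl\big) \subset \affY$. Regard $\affY$ as a module over $U\big(\affsl\big)$ via the adjoint action; since the $\ad x_i^{\pm}$ are locally nilpotent, this is an integrable module, and $T_i = \exp\ad x_i^+ \exp\ad(-x_i^-) \exp\ad x_i^+$ is precisely the standard operator attached to the $\fraksl_2$-triple $\big(x_i^+, h_i, x_i^-\big)$. Each braid relation is an identity between two algebra automorphisms of $\affY$, so it suffices to prove that the two sides coincide as linear operators on the underlying vector space. I would therefore prove the braid relations for these operators on the integrable module $\affY$ directly, which is exactly the classical statement alluded to before the proposition and which handles all generators $x_{k,r}^{\pm}$, $h_{k,r}$ at once rather than forcing a computation with the deformed relations.

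For a fixed pair $i \neq j$, the operators $T_i$ and $T_j$ involve only $\ad x_i^{\pm}$ and $\ad x_j^{\pm}$, hence only the adjoint action of the subalgebra $\frakg_{ij} \subset \affsl$ generated by $x_i^{\pm}$, $x_j^{\pm}$. Thus the relation between $T_i$ and $T_j$ can be verified after restricting the adjoint action to $\frakg_{ij}$. When $a_{ij}=0$, the Serre relations together with $\big[x_i^{\pm}, x_j^{\mp}\big]=0$ show that the two $\fraksl_2$-triples commute, so $\exp\ad x_i^{\pm}$ commutes with $\exp\ad x_j^{\pm}$ and $T_iT_j = T_jT_i$ follows immediately. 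When $a_{ij}=-1$ we have $\frakg_{ij} \cong \fraksl_3$; note that for $\affsl_N$ two distinct nodes never support a double edge, so the rank-two subalgebra is always of finite type.

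The remaining content is the single rank-two identity $T_iT_jT_i = T_jT_iT_j$ for $\fraksl_3$. Since $\ad x_i^{\pm}$ and $\ad x_j^{\pm}$ act locally nilpotently, $\affY$ restricted to $\frakg_{ij} \cong \fraksl_3$ is a locally finite module, i.e., a direct sum of finite-dimensional $\fraksl_3$-modules, so it is enough to check the identity on each finite-dimensional summand. On such a summand the $T_i$ are the actions of the standard lifts of the simple reflections $s_i$ in $\mathrm{SL}_3$, and $T_iT_jT_i = T_jT_iT_j$ is the classical braid relation for these lifts, verified by a direct computation in $\mathrm{SL}_3$; this then propagates to all of $\affY$ and hence holds at the level of automorphisms. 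The only genuine work is this $a_{ij}=-1$ case, the reduction to rank two being formal; the point to justify with care is that local nilpotence of the four generators makes the restricted adjoint module locally finite, so that the finite-dimensional braid relation applies.
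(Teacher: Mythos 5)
Your argument is correct in substance but follows a genuinely different route from the paper. The paper never invokes integrability or complete reducibility: it uses the conjugation identity $\varphi \circ \exp\ad X \circ \varphi^{-1} = \exp\ad \varphi(X)$ to rewrite $T_iT_jT_i^{-1}$ (resp.\ $T_iT_jT_iT_j^{-1}T_i^{-1}$) as $\exp\ad T_i\big(x_j^+\big)\exp\ad T_i\big({-}x_j^-\big)\exp\ad T_i\big(x_j^+\big)$ (resp.\ the analogous product with $T_iT_j\big(x_i^{\pm}\big)$), and then concludes from the explicit formulas $T_i\big(x_j^{\pm}\big)=x_j^{\pm}$ for $a_{ij}=0$ and $T_iT_j\big(x_i^{\pm}\big)=x_j^{\pm}$ for $a_{ij}=-1$ (Propositions~\ref{prop:formula1} and~\ref{prop:iji_pre}). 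That argument only requires knowing $T_i$ and $T_iT_j$ on the four degree-zero Chevalley generators involved, and it transfers verbatim to the general Kac--Moody setting of Appendix~\ref{appendixA}. Your approach instead reduces to the rank-two subalgebra $\frakg_{ij}$ and, in the $a_{ij}=-1$ case, to the classical braid relation for the lifts $n_i=\exp(e_i)\exp(-f_i)\exp(e_i)$ in $\mathrm{SL}_3$ acting on finite-dimensional modules; this is more conceptual and makes transparent \emph{why} the relation holds, and your treatment of $a_{ij}=0$ (all four brackets among $x_i^{\pm}$, $x_j^{\pm}$ vanish, so the exponentials commute outright) is even more direct than the paper's. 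The one step you flag but do not actually close is the local finiteness of $\affY$ as a $\frakg_{ij}$-module: local nilpotence of $\ad x_i^{\pm}$, $\ad x_j^{\pm}$ alone does not imply local finiteness for an arbitrary module over $\fraksl_3$. You need the additional observation that $\affY$ is a weight module for $\ad h_i$, $\ad h_j$ (every monomial in the generators $x_{k,r}^{\pm}$, $h_{k,r}$ is a simultaneous eigenvector with integer eigenvalues), after which $U(\frakg_{ij})v$ is finite-dimensional for each weight vector $v$ by a PBW argument and complete reducibility applies. With that sentence added, your proof is complete; without it, the decomposition into finite-dimensional summands is unjustified.
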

For each $w \in \hat{W}$ with a reduced expression $w=s_{i_1} \cdots s_{i_l}$, we can define an algebra automorphism $T_w$ of $\affY$ by $T_w = T_{i_1} \cdots T_{i_l}$ thanks to the braid relations.

Let us start the proof with some preparations. A proof of the following formulas is straightforward.
\begin{lem}\label{lem:expad1} Assume $a_{ij}=-1$. Then
\begin{enumerate}\itemsep=0pt
\item $\exp\ad x_{i}^{+}$ sends:
\begin{gather*}
x_i^+ \mapsto x_i^+, \qquad x_j^+ \mapsto x_j^+ + \big[x_i^+,x_j^+\big],\qquad x_i^- \mapsto x_i^-+h_i-x_i^+, \qquad x_j^- \mapsto x_j^-,\\
h_i \mapsto h_i-2x_i^+,\qquad h_j \mapsto h_j+x_i^+,\qquad \big[x_i^+,x_j^+\big] \mapsto \big[x_i^+,x_j^+\big],\\
 \big[x_i^-,x_j^-\big] \mapsto \big[x_i^-,x_j^-\big] + x_j^-;
\end{gather*}
\item $\exp\ad (-x_{i}^{-})$ sends:
\begin{gather*}
x_i^+ \mapsto x_i^++h_i-x_i^-, \qquad x_j^+ \mapsto x_j^+,\qquad x_i^- \mapsto x_i^-, \qquad x_j^- \mapsto x_j^--[x_i^-,x_j^-],\\
h_i \mapsto h_i-2x_i^-,\qquad h_j \mapsto h_j+x_i^-,\qquad \big[x_i^+,x_j^+\big] \mapsto \big[x_i^+,x_j^+\big]-x_j^+,\\
 \big[x_i^-,x_j^-\big] \mapsto \big[x_i^-,x_j^-\big].
\end{gather*}
\end{enumerate}
\end{lem}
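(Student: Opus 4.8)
The plan is to verify each assignment by directly expanding $\exp\ad x_i^+ = \sum_{n\geq 0}\frac{1}{n!}(\ad x_i^+)^n$ (and similarly $\exp\ad(-x_i^-)$) and using local nilpotency to truncate the sum after a few terms. Every element listed lies in the subalgebra $U\big(\affsl\big)$, and both automorphisms preserve it, so the whole computation takes place inside the rank-two subalgebra of type $A_2$ generated by the $i$- and $j$-triples. The only relations I need are the Chevalley--Serre relations $[x_i^+,x_i^-]=h_i$, $[x_i^+,x_j^-]=0$ for $i\neq j$, $[h_i,x_j^\pm]=\pm a_{ij}x_j^\pm$, together with the Serre relation $[x_i^\pm,[x_i^\pm,x_j^\pm]]=0$, which holds because $a_{ij}=-1$.

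For the three generators attached to the node $i$ I would record the short $(\ad x_i^+)$-chains $x_i^-\mapsto h_i\mapsto -2x_i^+\mapsto 0$ and $x_i^+\mapsto 0$; summing with the factorial weights $1,1,\tfrac12$ yields $x_i^-\mapsto x_i^-+h_i-x_i^+$, $h_i\mapsto h_i-2x_i^+$, and $x_i^+\mapsto x_i^+$. For the generators attached to $j$, the identities $[x_i^+,x_j^-]=0$, $[x_i^+,h_j]=-a_{ji}x_i^+=x_i^+$, and the vanishing of $[x_i^+,[x_i^+,x_j^+]]$ by the Serre relation give the remaining first-order images $x_j^-\mapsto x_j^-$, $h_j\mapsto h_j+x_i^+$, and $x_j^+\mapsto x_j^++[x_i^+,x_j^+]$; the invariance of $[x_i^+,x_j^+]$ is immediate from the same Serre relation.

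The two compound elements require a single application of the Jacobi identity. I would compute $(\ad x_i^+)[x_i^-,x_j^-]=[[x_i^+,x_i^-],x_j^-]+[x_i^-,[x_i^+,x_j^-]]=[h_i,x_j^-]=-a_{ij}x_j^-=x_j^-$, after which a second application returns $[x_i^+,x_j^-]=0$, so the chain terminates and gives $[x_i^-,x_j^-]\mapsto[x_i^-,x_j^-]+x_j^-$. Part~(ii), concerning $\exp\ad(-x_i^-)$, follows by the identical procedure; alternatively one can deduce it from part~(i) by conjugating with the Chevalley involution $\sigma\colon x_i^\pm\mapsto -x_i^\mp$, $h_i\mapsto -h_i$, using $\sigma\,(\exp\ad x_i^+)\,\sigma^{-1}=\exp\ad(-x_i^-)$. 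Since every chain terminates after at most two applications of $\ad$, there is no genuine obstacle; the only care needed is correct sign bookkeeping and the expansion of the compound brackets through Jacobi.
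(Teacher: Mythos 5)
Your computation is correct and is exactly the ``straightforward'' direct verification the paper has in mind (the paper omits the proof entirely): expand the exponential, observe that every $\ad$-chain terminates after at most two or three steps by the Serre and weight relations, and handle the compound brackets via Jacobi. The Chevalley-involution shortcut for part~(ii) is a clean and valid way to avoid redoing the sign bookkeeping.
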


The following two propositions are well known. Proposition~\ref{prop:formula1} follows from Lemma~\ref{lem:expad1}. Then Proposition~\ref{prop:iji_pre} follows from Proposition~\ref{prop:formula1}.
\begin{prop}\label{prop:formula1} We have
\begin{gather*}
T_i(x_j^+) = \begin{cases}
-x_i^- & \text{if $i=j$},\\
\big[x_i^+,x_j^+\big] & \text{if $a_{ij}=-1$},\\
x_j^+ & \text{if $a_{ij}=0$},
\end{cases}\qquad
T_i\big(x_j^-\big) = \begin{cases}
-x_i^+ & \text{if $i=j$},\\
-\big[x_i^-,x_j^-\big] & \text{if $a_{ij}=-1$},\\
x_j^- & \text{if $a_{ij}=0$},
\end{cases}
\\
T_i(h_j) = \begin{cases}
-h_i & \text{if $i=j$},\\
h_j + h_i & \text{if $a_{ij}=-1$},\\
h_j & \text{if $a_{ij}=0$}.
\end{cases}
\end{gather*}
\end{prop}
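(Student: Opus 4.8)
The plan is to compute $T_i = \exp\ad x_i^{+} \exp\ad (-x_i^{-}) \exp\ad x_i^{+}$ applied to each of $x_j^+$, $x_j^-$, $h_j$ by pushing the element through the three exponentials one at a time, reading off each intermediate image from the substitution rules already recorded in Lemma~\ref{lem:expad1}. I would organize the argument according to the three cases in the statement. The case $a_{ij}=0$ is immediate: then $x_i^{\pm}$ commute with $x_j^{\pm}$ and with $h_j$, so each factor $\exp\ad x_i^{\pm}$ fixes these elements and hence $T_i$ does as well.

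For $i=j$ I would run the $\mathfrak{sl}_2$ computation inside the triple $\{x_i^+,x_i^-,h_i\}$, using those entries of Lemma~\ref{lem:expad1} that involve only $x_i^{\pm}$ and $h_i$; these are intrinsic to the triple and so do not depend on $j$. Applying the three exponentials to $x_i^+$ gives the chain $x_i^+ \mapsto x_i^+ \mapsto x_i^+ + h_i - x_i^- \mapsto -x_i^-$ after collecting terms, so $T_i(x_i^+) = -x_i^-$; the parallel chains for $x_i^-$ and $h_i$ yield $T_i(x_i^-) = -x_i^+$ and $T_i(h_i) = -h_i$. The coefficients and signs are arranged precisely so that the auxiliary $h_i$ and $x_i^+$ terms produced at the middle step cancel against what the final $\exp\ad x_i^+$ contributes.

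For $a_{ij}=-1$ I would feed $x_j^+$, $x_j^-$, $h_j$ through the same three steps, now invoking the full table of Lemma~\ref{lem:expad1}, including the images of the commutators $\big[x_i^+,x_j^+\big]$ and $\big[x_i^-,x_j^-\big]$. For $x_j^+$ the chain is $x_j^+ \mapsto x_j^+ + \big[x_i^+,x_j^+\big] \mapsto \big[x_i^+,x_j^+\big] \mapsto \big[x_i^+,x_j^+\big]$, giving $T_i(x_j^+) = \big[x_i^+,x_j^+\big]$, and symmetrically $T_i(x_j^-) = -\big[x_i^-,x_j^-\big]$. The computation of $T_i(h_j)$ is the one in which extra $x_i^{\pm}$ and $h_i$ terms appear at the intermediate stages: here $\exp\ad x_i^+$ first sends $h_j$ to $h_j + x_i^+$, then $\exp\ad(-x_i^-)$ sends $h_j + x_i^+$ to $h_j + x_i^+ + h_i$, and the final exponential must recombine these so that only $h_j + h_i$ survives.

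There is no conceptual obstacle: every step reduces to the already-established substitution rules of Lemma~\ref{lem:expad1}. The only thing demanding care is the sign and coefficient bookkeeping across the threefold composition, in particular the cancellations in the $i=j$ chains and the recombination of the auxiliary $x_i^{\pm}$ and $h_i$ terms in the computation of $T_i(h_j)$ when $a_{ij}=-1$.
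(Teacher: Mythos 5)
Your proposal is correct and is exactly the route the paper takes: the paper simply states that Proposition~\ref{prop:formula1} follows from Lemma~\ref{lem:expad1}, and your chains (e.g.\ $x_i^+ \mapsto x_i^+ \mapsto x_i^+ + h_i - x_i^- \mapsto -x_i^-$ and $h_j \mapsto h_j + x_i^+ \mapsto h_j + x_i^+ + h_i \mapsto h_j + h_i$) are precisely the computation left implicit there. All the intermediate images and cancellations you describe check out against the table in Lemma~\ref{lem:expad1}.
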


\begin{prop}\label{prop:iji_pre}Assume $a_{ij}=-1$. Then we have
\begin{gather*}
T_i T_j \big(x_i^+\big) = x_j^+,\qquad T_i T_j \big(x_i^-\big) = x_j^-,\qquad T_i T_j (h_i) = h_j.
\end{gather*}
\end{prop}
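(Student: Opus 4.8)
\emph{Proof proposal.} The plan is to compute each of the three images by applying $T_j$ first and then $T_i$, reading off the intermediate expression from Proposition~\ref{prop:formula1} and then using that $T_i$ is an algebra automorphism, hence preserves the Lie bracket: $T_i([X,Y])=[T_i(X),T_i(Y)]$. Since the Cartan matrix is symmetric, $a_{ji}=a_{ij}=-1$, so Proposition~\ref{prop:formula1} applies verbatim with the roles of $i$ and $j$ interchanged.

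Concretely, for the first identity I would start from $T_j(x_i^+)=[x_j^+,x_i^+]$ and apply $T_i$ to this bracket, obtaining $[T_i(x_j^+),T_i(x_i^+)]=[[x_i^+,x_j^+],-x_i^-]$; the remaining task is to simplify this double bracket to $x_j^+$. The treatment of $x_i^-$ is entirely parallel: starting from $T_j(x_i^-)=-[x_j^-,x_i^-]$ one arrives at $-[[x_i^-,x_j^-],x_i^+]$, which should reduce to $x_j^-$. The case of $h_i$ is the easiest and involves no brackets at all: from $T_j(h_i)=h_i+h_j$ together with the already-known values $T_i(h_i)=-h_i$ and $T_i(h_j)=h_j+h_i$, I immediately get $T_iT_j(h_i)=-h_i+(h_j+h_i)=h_j$.

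The only genuine computation is the reduction of the two double brackets, and here I would invoke the Jacobi identity together with the degree-zero defining relations $[x_i^+,x_j^-]=\delta_{ij}h_i$ and $[h_i,x_j^{\pm}]=\pm a_{ij}x_j^{\pm}$. For instance, expanding $[[x_i^+,x_j^+],x_i^-]$ via Jacobi gives $[x_i^+,[x_j^+,x_i^-]]-[x_j^+,[x_i^+,x_i^-]]$; the first term vanishes because $[x_j^+,x_i^-]=0$ for $i\neq j$, while the second equals $-[x_j^+,h_i]=[h_i,x_j^+]=a_{ij}x_j^+=-x_j^+$, whence $[[x_i^+,x_j^+],-x_i^-]=x_j^+$ as required. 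The computation for $x_i^-$ is identical after exchanging the roles of $+$ and $-$ and bookkeeping the signs.

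I do not expect any real obstacle: everything takes place inside the degree-zero copy of $U\big(\affsl\big)$, and the statement is simply the classical fact that $T_iT_j$ carries the $i$-th Chevalley triple to the $j$-th one when $a_{ij}=-1$. The only point requiring care is the sign bookkeeping in the Jacobi expansions and keeping track of which index plays which role when passing between $T_i$ and $T_j$.
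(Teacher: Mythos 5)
Your proposal is correct and matches the paper's route: the paper simply asserts that Proposition~\ref{prop:iji_pre} follows from Proposition~\ref{prop:formula1}, and your computation (apply $T_j$ via Proposition~\ref{prop:formula1}, then $T_i$ as a bracket-preserving automorphism, and reduce the resulting double brackets with the Jacobi identity and the relations $[x_i^+,x_j^-]=\delta_{ij}h_i$, $[h_i,x_j^{\pm}]=\pm a_{ij}x_j^{\pm}$) is exactly the omitted verification, carried out with the correct signs.
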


\begin{proof}[Proof of Proposition~\ref{prop:braid}] We use that $\varphi \circ \exp\ad X \circ \varphi^{-1} = \exp \ad \varphi(X)$ holds for any algebra automorphism~$\varphi$. If $a_{ij}=0$, then we have
\begin{gather*}
T_i T_j T_i^{-1} = \exp\ad T_i\big(x_j^+\big) \exp\ad T_i\big({-}x_j^-\big) \exp\ad T_i\big(x_j^+\big) = T_j
\end{gather*}
by the formulas in Proposition~\ref{prop:formula1}. If $a_{ij}=-1$, then we have
\begin{gather*}
T_i T_j T_i T_j^{-1} T_i^{-1} = \exp\ad T_i T_j \big(x_i^+\big) \exp\ad T_i T_j \big({-}x_i^-\big) \exp\ad T_i T_j \big(x_i^+\big) = T_j
\end{gather*}
by the formulas in Proposition~\ref{prop:iji_pre}. The proof is complete.
\end{proof}

We give an alternative definition of $T_i$.
\begin{prop}The operator $T_i$ coincides with $\exp\ad \big({-}x_i^{-}\big) \exp\ad x_i^{+} \exp\ad \big({-}x_i^{-}\big)$.
\end{prop}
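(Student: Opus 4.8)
The plan is to reduce the asserted equality $\exp\ad x_i^+\,\exp\ad(-x_i^-)\,\exp\ad x_i^+ = \exp\ad(-x_i^-)\,\exp\ad x_i^+\,\exp\ad(-x_i^-)$ to a single identity in the group $SL_2$. Write $A = \exp\ad x_i^+$ and $B = \exp\ad(-x_i^-)$; both are well-defined automorphisms of $\affY$ precisely because $\ad x_i^\pm$ act locally nilpotently, as already observed. Since $T_i = ABA$ by definition, the claim is the braid-type relation $ABA = BAB$. The essential point is that $A$ and $B$ involve only the elements $x_i^+$, $x_i^-$, $h_i$, which span a subalgebra $\mathfrak{a}_i \cong \mathfrak{sl}_2$ of $\affY$, and that the adjoint action of $\mathfrak{a}_i$ on $\affY$ is well-behaved enough to exponentiate to a group action.

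First I would record that the adjoint action $\pi\colon \mathfrak{a}_i \to \End(\affY)$, $\pi(X) = \ad X$, makes $\affY$ an integrable $\mathfrak{a}_i$-module. Indeed, every defining generator is an $\ad h_i$-eigenvector, since $[h_i, x_{j,r}^\pm] = \pm a_{ij} x_{j,r}^\pm$ and $[h_i, h_{j,r}] = 0$; hence $\affY$ is spanned by $\ad h_i$-weight vectors. Together with the local nilpotency of $\ad x_i^\pm$, the standard structure theorem for $\mathfrak{sl}_2$-modules on which $e$ and $f$ act locally nilpotently shows that $\affY$ is a (possibly infinite) direct sum of finite-dimensional irreducibles. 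Consequently $\pi$ integrates to a representation $\Pi\colon SL_2 \to \End(\affY)$ with $\Pi(\exp X) = \exp \pi(X)$; in particular $A = \Pi(u_+)$ and $B = \Pi(u_-)$ for $u_+ = \bigl(\begin{smallmatrix}1&1\\0&1\end{smallmatrix}\bigr)$ and $u_- = \bigl(\begin{smallmatrix}1&0\\-1&1\end{smallmatrix}\bigr)$, the images of the Chevalley generators $e \leftrightarrow x_i^+$ and $-f \leftrightarrow -x_i^-$ under the exponential map of $SL_2$.

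Then the proof concludes with the matrix identity $u_+ u_- u_+ = u_- u_+ u_- = \bigl(\begin{smallmatrix}0&1\\-1&0\end{smallmatrix}\bigr)$, verified by two $2\times 2$ multiplications. Applying $\Pi$ and using $\Pi(gh) = \Pi(g)\Pi(h)$ gives $ABA = \Pi(u_+u_-u_+) = \Pi(u_-u_+u_-) = BAB$, as desired. As a consistency check, $\ad$ of the common element $\bigl(\begin{smallmatrix}0&1\\-1&0\end{smallmatrix}\bigr)$ sends $x_i^+ \mapsto -x_i^-$, $x_i^- \mapsto -x_i^+$, $h_i \mapsto -h_i$, in agreement with the values of $T_i$ recorded in Proposition~\ref{prop:formula1}.

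The main obstacle is the integrability step, that is, justifying that the adjoint $\mathfrak{a}_i$-action genuinely exponentiates to $SL_2$ on all of $\affY$ rather than merely on the subalgebra $\mathfrak{a}_i$. If one prefers to avoid the general structure theorem, the same conclusion follows by checking the two operators agree on an arbitrary element $v$: such $v$ lies in a finite-dimensional $\mathfrak{a}_i$-submodule, generated from a weight vector by finitely many applications of the locally nilpotent $\ad x_i^\pm$, on which both sides manifestly act as $\ad$ of the same $SL_2$ element. Either way, once integrability is in hand the remaining computation is purely the classical rank-one braid identity.
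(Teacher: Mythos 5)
Your argument is correct, but it takes a genuinely different route from the paper. The paper's proof is a one-line conjugation trick: writing $T_i'$ for the right-hand side, it computes $T_i T_i' T_i^{-1} = \exp\ad T_i\big({-}x_i^-\big)\exp\ad T_i\big(x_i^+\big)\exp\ad T_i\big({-}x_i^-\big)$ via the identity $\varphi\circ\exp\ad X\circ\varphi^{-1}=\exp\ad\varphi(X)$, and then uses the already-established values $T_i\big(x_i^{\pm}\big)=-x_i^{\mp}$ from Proposition~\ref{prop:formula1} to conclude that this equals $T_i$ itself, whence $T_i'=T_i$. Your proof instead integrates the adjoint action of the copy of $\mathfrak{sl}_2$ spanned by $x_i^{\pm}$, $h_i$ to an $SL_2$-action and reduces the claim to the $2\times 2$ identity $u_+u_-u_+=u_-u_+u_-$; you correctly flag integrability as the one point needing care, and your fallback --- that any element lies in a finite-dimensional $\ad\mathfrak{a}_i$-submodule (generated from finitely many weight vectors by the locally nilpotent $\ad x_i^{\pm}$), on which the finite-dimensional $\mathfrak{sl}_2$-theory applies --- closes that gap without invoking the full structure theorem. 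The trade-off: the paper's argument is shorter but leans on the explicit formulas of Proposition~\ref{prop:formula1} (hence on Lemma~\ref{lem:expad1}), while yours is independent of those computations, works verbatim for any algebra containing $U\big(\affsl\big)$ on which $\ad x_i^{\pm}$ are locally nilpotent, and makes transparent why the common value should act as the Weyl group reflection. One cosmetic remark: in your consistency check the element $\bigl(\begin{smallmatrix}0&1\\-1&0\end{smallmatrix}\bigr)$ is a group element, so the operator you apply to $x_i^{\pm}$, $h_i$ is $\mathrm{Ad}$ of that matrix rather than $\ad$ of a Lie algebra element; the computed values are nevertheless correct and do match Proposition~\ref{prop:formula1}.
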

\begin{proof}Put $T_i' = \exp\ad \big({-}x_i^{-}\big) \exp\ad x_i^{+} \exp\ad \big({-}x_i^{-}\big)$ for a while. Then we have
\begin{gather*}
T_i T_i' T_i^{-1} = \exp\ad T_i\big({-}x_i^{-}\big) \exp\ad T_i\big(x_i^+\big) \exp\ad T_i\big({-}x_i^{-}\big) = T_i
\end{gather*}
by the formulas in Proposition~\ref{prop:formula1}. Hence the assertion is proved.
\end{proof}

Note that $T_i^{-1}$ is given by
\begin{gather*}
\exp\ad x_i^- \exp\ad \big({-}x_i^+\big) \exp\ad x_i^- = \exp\ad \big({-}x_i^+\big) \exp\ad x_i^- \exp\ad \big({-}x_i^+\big).
\end{gather*}

\begin{lem}\label{lem:rho_omega_and_T} We have $(i)$ $\omega \circ T_i = T_{i} \circ \omega$ and $(ii)$ $\rho \circ T_i = T_{i-1} \circ \rho$.
\end{lem}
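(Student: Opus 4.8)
The plan is to reduce both identities to how $\exp\ad$ interacts with (anti-)automorphisms, in the same spirit as the proof of Proposition~\ref{prop:braid}. Part (ii) is the easier of the two, so I would dispose of it first. Since $\rho$ is an algebra automorphism, the conjugation formula $\rho \circ \exp\ad X \circ \rho^{-1} = \exp\ad \rho(X)$ applies verbatim. Evaluating $\rho$ on the relevant generators, the defining binomial sum collapses to a single term at degree $0$, giving $\rho(x_i^{\pm}) = x_{i-1}^{\pm}$. Hence
\begin{gather*}
\rho \circ T_i \circ \rho^{-1} = \exp\ad \rho\big(x_i^+\big) \exp\ad \rho\big({-}x_i^-\big) \exp\ad \rho\big(x_i^+\big) = \exp\ad x_{i-1}^+ \exp\ad\big({-}x_{i-1}^-\big) \exp\ad x_{i-1}^+ = T_{i-1},
\end{gather*}
which is exactly the asserted identity $\rho \circ T_i = T_{i-1} \circ \rho$.

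For part (i) the subtlety is that $\omega$ is an anti-automorphism: it reverses the order of products and therefore does not conjugate $\exp\ad$ in the naive fashion. The key preliminary computation is to check that $\omega \circ \ad X = -\ad \omega(X) \circ \omega$ as operators on $\affY$. This follows directly from $\omega([X,Y]) = [\omega(Y),\omega(X)] = -[\omega(X),\omega(Y)]$. Iterating introduces a sign $(-1)^n$ at the $n$-th power, and summing yields the twisted conjugation rule $\omega \circ \exp\ad X = \exp\ad\big({-}\omega(X)\big) \circ \omega$.

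With this rule in hand, I would move $\omega$ rightward past each of the three factors of $T_i = \exp\ad x_i^+ \exp\ad(-x_i^-) \exp\ad x_i^+$, using $\omega(x_i^+) = x_i^-$ and $\omega(x_i^-) = x_i^+$. Each passage flips the sign of the exponent according to the rule above: the outer $\exp\ad x_i^+$ factors become $\exp\ad(-x_i^-)$, while the middle $\exp\ad(-x_i^-)$ becomes $\exp\ad x_i^+$. The net effect is
\begin{gather*}
\omega \circ T_i = \exp\ad\big({-}x_i^-\big) \exp\ad x_i^+ \exp\ad\big({-}x_i^-\big) \circ \omega.
\end{gather*}
By the alternative expression for $T_i$ established just above the lemma, the operator on the right is precisely $T_i$, so $\omega \circ T_i = T_i \circ \omega$, completing the proof. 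The only point demanding care is the sign bookkeeping through the three factors; beyond that the argument is purely formal and I anticipate no genuine obstacle.
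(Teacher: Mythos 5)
Your proof is correct and follows essentially the same route as the paper: part (ii) by straightforward conjugation with the automorphism $\rho$, and part (i) by the twisted rule $\omega \circ \exp\ad X = \exp\ad(-\omega(X)) \circ \omega$, which turns $\omega \circ T_i$ into $T_i' \circ \omega$ and then invokes the previously established identity $T_i' = T_i$. The sign bookkeeping and the (non-)reversal of the factor order are handled correctly.
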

\begin{proof}For the assertion (i), we use $\omega (\exp\ad X (Y)) = \exp\ad(-\omega(X))(\omega(Y))$. Then we have $\omega(T_i(X))=T_i'(\omega(X))$. Since we have proved $T_i'=T_i$, the assertion holds.

The assertion (ii) is obvious.
\end{proof}

Let $M$ be a $\affY$-module and assume that $x_i^{\pm}$ acts on $M$ locally nilpotently. Then an automorphism $T_i^M$ of $M$ is defined similarly. The following property is immediate.
\begin{prop}Let $M$ be a $\affY$-module and assume that $x_i^{\pm}$ acts on $M$ locally nilpotently. Then for any $X \in \affY$ and $m \in M$,
\begin{gather*}
T_i^M(Xm)=T_i(X)T_i^M(m)
\end{gather*}
holds.
\end{prop}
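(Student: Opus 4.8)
The plan is to prove the intertwining property for a single exponential factor and then compose. Following the definition of $T_i$, the operator $T_i^M$ is $\exp\big(x_i^+\big)\exp\big({-}x_i^-\big)\exp\big(x_i^+\big)$, where for an element $x$ acting locally nilpotently on $M$ we set $\exp(x)=\sum_{n\geq 0}\frac{1}{n!}x^n$; this is a well-defined invertible operator on $M$ because each power series terminates on every vector. Correspondingly, $T_i=\exp\ad x_i^+\circ\exp\ad\big({-}x_i^-\big)\circ\exp\ad x_i^+$ on $\affY$, a product of three algebra automorphisms. The key observation is that the asserted property is \emph{transitive} under composition: if two pairs of operators satisfy such an intertwining relation, so does their composite. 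Hence it suffices to treat each exponential factor separately.

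First I would prove the single-factor identity: for $x\in\{x_i^+,-x_i^-\}$ (more generally, for any $x$ acting locally nilpotently on $M$ with $\ad x$ locally nilpotent on $\affY$), and for all $X\in\affY$, $m\in M$,
\[
\exp(x)(Xm)=\big(\exp(\ad x)(X)\big)\,\exp(x)(m).
\]
This rests on the associative-algebra identity $x^n X=\sum_{k=0}^{n}\binom{n}{k}\big((\ad x)^k X\big)x^{n-k}$, proved by an easy induction on $n$ using $xY=Yx+(\ad x)(Y)$ and Pascal's rule. Applying both sides to $m$, dividing by $n!$, using $\frac{1}{n!}\binom{n}{k}=\frac{1}{k!(n-k)!}$, and summing over $n$, the double sum reindexes (with $j=n-k$) into $\big(\sum_{k}\frac{1}{k!}(\ad x)^k X\big)\big(\sum_{j}\frac{1}{j!}x^j m\big)=\big(\exp(\ad x)(X)\big)\big(\exp(x)(m)\big)$. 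Every sum here is finite by local nilpotency, so the rearrangement is legitimate.

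With the single-factor identity in hand, the general statement follows by peeling off the three factors one at a time. Writing $E_1=E_3=\exp\big(x_i^+\big)$, $E_2=\exp\big({-}x_i^-\big)$ and $\phi_1=\phi_3=\exp\ad x_i^+$, $\phi_2=\exp\ad\big({-}x_i^-\big)$, we have $E_k(Ym)=\phi_k(Y)E_k(m)$ for each $k$, whence
\[
T_i^M(Xm)=E_1E_2E_3(Xm)=\phi_1\phi_2\phi_3(X)\,E_1E_2E_3(m)=T_i(X)\,T_i^M(m),
\]
as desired. There is no serious obstacle here: the only point requiring care is justifying the interchange of summation in the single-factor identity, which is immediate once one notes that local nilpotency of $x_i^{\pm}$ on $M$ (guaranteeing $E_1,E_2,E_3$ are defined) together with local nilpotency of $\ad x_i^{\pm}$ on $\affY$ (already used to define $T_i$) makes every sum encountered finite.
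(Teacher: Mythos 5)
Your proof is correct and is exactly the standard argument the paper has in mind: the paper states this property without proof (calling it ``immediate''), and your single-factor identity $\exp(x)(Xm)=\big(\exp(\ad x)(X)\big)\exp(x)(m)$ together with composition over the three exponential factors is the intended justification. The finiteness bookkeeping via local nilpotency of $x_i^{\pm}$ on $M$ and of $\ad x_i^{\pm}$ on $\affY$ is handled properly.
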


In fact, this is a general property for any associative algebra such that it contains $U\big(\affsl\big)$ as a subalgebra and the operators $\exp\ad x_i^{\pm}$ are well-defined.

\subsection{Action on generators}

We compute the action of $T_i$ on $x_{j,1}^{\pm}$, $\tilde{h}_{j,1}$. We will use the following formulas.
\begin{lem}\label{lem:expad2}Assume $a_{ij}=-1$. Then
\begin{enumerate}\itemsep=0pt
\item $\exp\ad x_{i}^{+}$ sends:
\begin{gather*}
x_{i,1}^+ \mapsto x_{i,1}^+-\hbar \big(x_i^+\big)^2,\\
x_{i,1}^- \mapsto x_{i,1}^- + h_{i,1}-x_{i,1}^+ -\tfrac{\hbar}{2}\big\{ h_i, x_i^+\big\} + \hbar \big(x_i^+\big)^2,\\
x_{j,1}^- \mapsto x_{j,1}^-,\\
\big[x_i^-,x_{j,1}^-\big] \mapsto \big[x_i^-,x_{j,1}^-\big]+x_{j,1}^-,\\
\big\{h_i,x_i^+\big\} \mapsto \big\{h_i,x_i^+\big\} - 4\big(x_i^+\big)^2,\\
\big(x_i^+\big)^2 \mapsto (x_i^+)^2;
\end{gather*}
\item $\exp\ad (-x_{i}^{-})$ sends:
\begin{gather*}
x_{i,1}^+ \mapsto x_{i,1}^+ + h_{i,1}-x_{i,1}^- -\tfrac{\hbar}{2}\big\{ h_i, x_i^-\big\} + \hbar \big(x_i^-\big)^2,\\
x_{i,1}^- \mapsto x_{i,1}^- - \hbar\big(x_i^-\big)^2,\\
x_{j,1}^- \mapsto x_{j,1}^- - \big[x_i^-,x_{j,1}^-\big],\\
h_{i,1} \mapsto h_{i,1} - 2x_{i,1}^- - \hbar\big\{ h_i, x_i^- \big\} + 3\hbar \big(x_i^-\big)^2,\\
\big\{h_i,x_i^+\big\} \mapsto \big\{h_i,x_i^+\big\} - 3\big\{h_i,x_i^-\big\} - 2\big\{x_i^+,x_i^-\big\} + 2h_i^2 + 4\big(x_i^-\big)^2,\\
\big(x_i^+\big)^2 \mapsto \big(x_i^+\big)^2 + h_i^2 + \big(x_i^-\big)^2+ \big\{h_i,x_i^+\big\} - \big\{h_i,x_i^-\big\} - \big\{x_i^+,x_i^-\big\}.
\end{gather*}
\end{enumerate}
\end{lem}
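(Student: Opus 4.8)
The plan is to compute each image directly from the exponential series $\exp\ad X(Y)=\sum_{n\ge 0}\frac{1}{n!}(\ad X)^n(Y)$, using that $\ad X$ is a derivation (so the Leibniz rule applies to products and to anticommutators) together with the defining relations, Lemma~\ref{lem:relations}, and the $\fraksl_2$-relations inside $U\big(\affsl\big)$, namely $[x_i^+,x_i^-]=h_i$, $[h_i,x_i^\pm]=\pm 2x_i^\pm$, and $[h_i,x_j^\pm]=\mp x_j^\pm$ when $a_{ij}=-1$. For the entries involving $x_{j,1}^-$ I also use the special case $r_1=r_2=0$ of the Serre relation in Definition~\ref{dfn:Yangian}, which gives $(\ad x_i^\pm)^2\big(x_{j,s}^\pm\big)=0$ for all $s$; this is what makes the chains starting from $x_{j,1}^-$ and $\big[x_i^-,x_{j,1}^-\big]$ terminate after finitely many steps.

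For each target $Y$ I would form the chain $Y,\ (\ad X)Y,\ (\ad X)^2Y,\dots$ until it vanishes and then read off the answer with the factorial weights. For instance, for $\exp\ad x_i^+\big(x_{i,1}^+\big)$ one computes $\big[x_i^+,x_{i,1}^+\big]=-\hbar\big(x_i^+\big)^2$ from Lemma~\ref{lem:relations} and then $\big[x_i^+,\big(x_i^+\big)^2\big]=0$, so the series stops after two terms and yields $x_{i,1}^+-\hbar\big(x_i^+\big)^2$. For $\exp\ad x_i^+\big(x_{i,1}^-\big)$ one finds $\big[x_i^+,x_{i,1}^-\big]=h_{i,1}$, then $\big[x_i^+,h_{i,1}\big]=-2x_{i,1}^+-\hbar\{h_i,x_i^+\}$, then $6\hbar\big(x_i^+\big)^2$, and finally $0$; weighting by $1,1,\tfrac12,\tfrac16$ reproduces the stated image. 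Every case has this mechanical shape, and local nilpotency of $\ad x_i^\pm$ guarantees termination.

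To cut the bookkeeping roughly in half I would invoke the relation $\omega\big(\exp\ad x_i^+(Y)\big)=\exp\ad\big({-}x_i^-\big)\big(\omega(Y)\big)$ established in the proof of Lemma~\ref{lem:rho_omega_and_T}, together with $\omega\big(x_{i,r}^\pm\big)=x_{i,r}^\mp$, $\omega(h_{i,r})=h_{i,r}$, and the symmetry of the anticommutator. This deduces, for example, the image of $x_{i,1}^-$ under $\exp\ad\big({-}x_i^-\big)$ from that of $x_{i,1}^+$ under $\exp\ad x_i^+$, and the image of $x_{i,1}^+$ under $\exp\ad\big({-}x_i^-\big)$ from that of $x_{i,1}^-$ under $\exp\ad x_i^+$.

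The main obstacle lies in the last two entries of part (ii), the images of $\{h_i,x_i^+\}$ and $\big(x_i^+\big)^2$ under $\exp\ad\big({-}x_i^-\big)$, which are not directly accessible through $\omega$ from part (i). Here the chain does not terminate quickly: $\ad\big({-}x_i^-\big)$ sends $\big(x_i^+\big)^2\mapsto\{h_i,x_i^+\}\mapsto 2h_i^2-2\{x_i^+,x_i^-\}\mapsto -6\{h_i,x_i^-\}\mapsto 24\big(x_i^-\big)^2\mapsto 0$, so one must push to the fourth power of the adjoint and carefully apply the Leibniz rule to each anticommutator and each quadratic monomial while tracking the $\fraksl_2$-weight structure. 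Assembling these contributions with the factorials $1,1,\tfrac12,\tfrac16,\tfrac1{24}$ yields the stated five-term expressions, and checking that the coefficients collapse exactly as claimed is the only genuinely delicate part of the computation.
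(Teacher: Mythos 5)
Your proposal is correct and matches the paper's own proof, which likewise verifies each entry by a direct computation of the terminating $\ad$-chains using the defining relations and Lemma~\ref{lem:relations} (the paper's sole worked example is exactly your chain for $\exp\ad x_i^+\big(x_{i,1}^-\big)$, and the Serre relation with $r_1=r_2=0$ is indeed what kills $(\ad x_i^-)^2\big(x_{j,1}^-\big)$). Your additional use of $\omega$ via $\omega\big(\exp\ad x_i^+(Y)\big)=\exp\ad\big({-}x_i^-\big)\big(\omega(Y)\big)$ is a legitimate shortcut consistent with how the paper deploys $\omega$ elsewhere, and your explicit chains for the two entries not reachable by $\omega$ check out.
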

\begin{proof}By a direct computation using the defining relations of $\affY$ with Lemma~\ref{lem:relations}. For example, by
\begin{gather*}
\ad x_i^{+} \big(x_{i,1}^-\big) = h_{i,1}, \\
\big(\ad x_i^{+}\big)^2 \big(x_{i,1}^-\big) = \big[x_i^+,h_{i,1}\big] = -2x_{i,1}^+ - \hbar \big\{h_i,x_i^+\big\}, \\
\big(\ad x_i^{+}\big)^3 \big(x_{i,1}^-\big) = -2 \big[x_i^+, x_{i,1}^+\big] - \hbar \big\{\big[x_i^+,h_i\big],x_i^+\big\} = 2\hbar\big(x_i^+\big)^2 + 4\hbar \big(x_i^+\big)^2 = 6\hbar\big(x_i^+\big)^2, \\
\big(\ad x_i^{+}\big)^4 \big(x_{i,1}^-\big)=0,
\end{gather*}
we have
\begin{gather*}
\exp\ad x_i^+ \big(x_{i,1}^-\big) = x_{i,1}^- + h_{i,1} - x_{i,1}^+ - \tfrac{\hbar}{2} \big\{h_i,x_i^+\big\} + \hbar\big(x_i^+\big)^2.\tag*{\qed}
\end{gather*}\renewcommand{\qed}{}
\end{proof}

\begin{prop}\label{prop:formula2}We have
\begin{gather*}
T_i\big(x_{j,1}^+\big) = \begin{cases}
-x_{i,1}^- + \tfrac{\hbar}{2}\big\{ h_i, x_i^- \big\} & \text{if $i=j$},\\
\big[x_i^+,x_{j,1}^+\big] & \text{if $a_{ij}=-1$},\\
x_{j,1}^+ & \text{if $a_{ij}=0$},
\end{cases} \\
T_i\big(x_{j,1}^-\big) = \begin{cases}
-x_{i,1}^+ + \tfrac{\hbar}{2}\big\{ h_i, x_i^+ \big\} & \text{if $i=j$},\\
-\big[x_{i}^-,x_{j,1}^-\big] & \text{if $a_{ij}=-1$},\\
x_{j,1}^- & \text{if $a_{ij}=0$},
\end{cases}
\\
T_i\big(\tilde{h}_{j,1}\big) = \begin{cases}
-\tilde{h}_{i,1} - \hbar \big\{ x_i^+,x_i^- \big\} & \text{if $i=j$},\\
\tilde{h}_{j,1} + \tilde{h}_{i,1} + \tfrac{\hbar}{2} \big\{ x_i^+,x_i^- \big\} + m_{ij}\tfrac{\ve_1 - \ve_2}{2} h_i & \text{if $a_{ij}=-1$},\\
\tilde{h}_{j,1} & \text{if $a_{ij}=0$}.
\end{cases}
\end{gather*}
\end{prop}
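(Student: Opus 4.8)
The plan is to treat the three cases $a_{ij}=0$, $a_{ij}=-1$, and $i=j$ separately, and in each case to feed the generator through the three factors of $T_i = \exp\ad x_i^+ \circ \exp\ad(-x_i^-) \circ \exp\ad x_i^+$ using the transformation rules collected in Lemma~\ref{lem:expad2}. The case $a_{ij}=0$ is immediate: the defining relations (together with the recursion $[h_{i,r+1},x_{j,s}^\pm]=[h_{i,r},x_{j,s+1}^\pm]$ obtained from them when $a_{ij}=m_{ij}=0$) show that $x_i^\pm$ commutes with $x_{j,1}^\pm$ and $h_{j,1}$, so $\exp\ad x_i^\pm$ fixes these elements and $T_i$ acts as the identity on $x_{j,1}^\pm$ and on $\tilde h_{j,1}=h_{j,1}-\tfrac{\hbar}{2} h_j^2$. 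To cut the remaining work roughly in half I would compute only the formulas for $x_{j,1}^-$ directly and recover those for $x_{j,1}^+$ from the anti-automorphism $\omega$: since $\omega(x_{j,r}^\pm)=x_{j,r}^\mp$ and $\omega\circ T_i=T_i\circ\omega$ by Lemma~\ref{lem:rho_omega_and_T}(i), we get $T_i(x_{j,1}^+)=\omega\big(T_i(x_{j,1}^-)\big)$, and $\omega$ sends $\{h_i,x_i^-\}\mapsto\{h_i,x_i^+\}$ and $[x_i^-,x_{j,1}^-]\mapsto -[x_i^+,x_{j,1}^+]$, turning each claimed $-$ formula into the corresponding $+$ formula.

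For the two direct computations I would substitute term by term. In the case $a_{ij}=-1$ the element $x_{j,1}^-$ is fixed by the outer $\exp\ad x_i^+$, is sent to $x_{j,1}^- - [x_i^-,x_{j,1}^-]$ by $\exp\ad(-x_i^-)$, and then the final $\exp\ad x_i^+$ sends $[x_i^-,x_{j,1}^-]\mapsto[x_i^-,x_{j,1}^-]+x_{j,1}^-$, so the $x_{j,1}^-$ terms cancel and $T_i(x_{j,1}^-)=-[x_i^-,x_{j,1}^-]$. The case $i=j$ is heavier: starting from $x_{i,1}^-$, Lemma~\ref{lem:expad2}(i) produces the five-term expression $x_{i,1}^-+h_{i,1}-x_{i,1}^+-\tfrac{\hbar}{2}\{h_i,x_i^+\}+\hbar(x_i^+)^2$, each summand of which must be run through $\exp\ad(-x_i^-)$ and then through the final $\exp\ad x_i^+$. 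The bookkeeping here is the main calculational obstacle: one must track the degree-zero corrections $(x_i^\pm)^2$, $\{h_i,x_i^\pm\}$, $h_i^2$, $\{x_i^+,x_i^-\}$ through two more substitutions, but the point is that almost everything cancels --- after $\exp\ad(-x_i^-)$ the intermediate expression collapses to $-x_{i,1}^+ +\tfrac{\hbar}{2}\{h_i,x_i^+\}+\hbar(x_i^+)^2$, and the last $\exp\ad x_i^+$ removes the $(x_i^+)^2$ term, leaving $T_i(x_{i,1}^-)=-x_{i,1}^+ +\tfrac{\hbar}{2}\{h_i,x_i^+\}$.

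Finally I would obtain the $\tilde h_{j,1}$ formulas from the $x_{j,1}^\pm$ formulas rather than by a fourth direct computation, using that $T_i$ is an algebra automorphism and that $h_{j,1}=[x_{j,1}^+,x_j^-]$, so $\tilde h_{j,1}=[x_{j,1}^+,x_j^-]-\tfrac{\hbar}{2} h_j^2$. Applying $T_i$ gives $T_i(\tilde h_{j,1})=\big[T_i(x_{j,1}^+),T_i(x_j^-)\big]-\tfrac{\hbar}{2} T_i(h_j)^2$, into which I substitute the already-established value of $T_i(x_{j,1}^+)$ together with $T_i(x_j^-)$ and $T_i(h_j)$ from Proposition~\ref{prop:formula1}. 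For $i=j$ this expansion, using $[x_{i,1}^-,x_i^+]=-h_{i,1}$ and $[\{h_i,x_i^-\},x_i^+]=-2h_i^2+2\{x_i^+,x_i^-\}$, yields $-\tilde h_{i,1}-\hbar\{x_i^+,x_i^-\}$. For $a_{ij}=-1$ the double commutator $\big[[x_i^+,x_{j,1}^+],[x_i^-,x_j^-]\big]$ must be expanded with the full defining relations; this is where the parameter combination $\ve_1-\ve_2$ enters, through the deformed current relation carrying the coefficient $m_{ij}\tfrac{\ve_1-\ve_2}{2}$, and I expect the careful extraction of this term to be the subtlest part of the argument. Collecting terms then produces $\tilde h_{j,1}+\tilde h_{i,1}+\tfrac{\hbar}{2}\{x_i^+,x_i^-\}+m_{ij}\tfrac{\ve_1-\ve_2}{2}h_i$, completing the proof.
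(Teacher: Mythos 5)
Your proposal is correct and follows essentially the same route as the paper: a direct computation of $T_i\big(x_{j,1}^-\big)$ through the three $\exp\ad$ factors using Lemma~\ref{lem:expad2} (with the same intermediate expression $-x_{i,1}^+ + \tfrac{\hbar}{2}\{h_i,x_i^+\} + \hbar(x_i^+)^2$ after the middle factor), recovery of the $+$ formulas via $\omega$, and derivation of the $\tilde h_{j,1}$ formulas by writing $h_{j,1}$ as a commutator of a degree-one generator with a Chevalley generator and substituting the already-established images. The only cosmetic difference is that you expand $h_{j,1}=\big[x_{j,1}^+,x_j^-\big]$ where the paper uses $h_{j,1}=\big[x_j^+,x_{j,1}^-\big]$, which changes nothing of substance.
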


\begin{proof}The formulas for $a_{ij}=0$ trivially hold. Let us consider the other cases.

We compute $T_i\big(x_{j,1}^-\big)$. First assume $i=j$. We have
\begin{gather*}
\exp\ad \big({-}x_i^-\big)\exp\ad x_i^+\big(x_{i,1}^-\big) = \exp\ad \big({-}x_i^-\big)\big(x_{i,1}^- + h_{i,1}-x_{i,1}^+ -\tfrac{\hbar}{2}\big\{ h_i, x_i^+\big\} + \hbar \big(x_i^+\big)^2\big)\\
\qquad {}= \big(x_{i,1}^- - \hbar(x_i^-)^2\big)+\big(h_{i,1}-2x_{i,1}^- - \hbar\big\{ h_i, x_i^- \big\} + 3\hbar \big(x_i^-\big)^2\big)\\
\qquad\quad{}-\big(x_{i,1}^+ + h_{i,1}-x_{i,1}^--\tfrac{\hbar}{2}\big\{ h_i, x_i^-\big\} + \hbar \big(x_i^-\big)^2\big)\\
\qquad\quad{}-\tfrac{\hbar}{2}\big(\big\{h_i,x_i^+\big\} - 3\big\{h_i,x_i^-\big\} - 2\big\{x_i^+,x_i^-\big\} + 2h_i^2 + 4\big(x_i^-\big)^2\big)\\
\qquad\quad{} +\hbar\big(\big(x_i^+\big)^2 + h_i^2 + \big(x_i^-\big)^2+ \big\{h_i,x_i^+\big\} - \big\{h_i,x_i^-\big\} - \big\{x_i^+,x_i^-\big\}\big)\\
\qquad{}=-x_{i,1}^+ + \tfrac{\hbar}{2}\big\{h_i,x_i^+\big\} +\hbar \big(x_i^+\big)^2.
\end{gather*}
Then we have
\begin{gather*}
\exp\ad x_i^+\big({-}x_{i,1}^+ + \tfrac{\hbar}{2}\big\{h_i,x_i^+\big\} +\hbar \big(x_i^+\big)^2\big) \\
\qquad {}= -\big( x_{i,1}^+-\hbar \big(x_i^+\big)^2 \big) + \tfrac{\hbar}{2}\big( \big\{h_i,x_i^+\big\} - 4\big(x_i^+\big)^2 \big) + \hbar \big(x_i^+\big)^2
 =-x_{i,1}^++\tfrac{\hbar}{2}\big\{h_i,x_i^+\big\}.
\end{gather*}
Next assume $a_{ij}=-1$. Then we have
\begin{gather*}
T_i\big(x_{j,1}^-\big)=\exp\ad x_i^+\exp\ad \big({-}x_i^-\big)\big(x_{j,1}^-\big) = \exp\ad x_i^+\big(x_{j,1}^- - \big[x_i^-,x_{j,1}^-\big]\big)\\
\hphantom{T_i\big(x_{j,1}^-\big)}{} = x_{j,1}^- - \big(\big[x_i^-,x_{j,1}^-\big]+x_{j,1}^-)=-\big[x_i^-,x_{j,1}^-\big]. 
\end{gather*}
The formulas for $T_i\big(x_{j,1}^+\big)$ are obtained by applying $\omega$ to these results.

We compute $T_{i}(h_{j,1})$. First assume $i=j$. Then we have
\begin{gather*}
T_i(h_{i,1}) = T_i\big(\big[x_{i}^+,x_{i,1}^-\big]\big) = \big[{-}x_{i}^-, -x_{i,1}^+ + \tfrac{\hbar}{2}\big\{ h_i, x_i^+ \big\}\big] = -h_{i,1} - \hbar \big\{x_i^+,x_i^-\big\} + \hbar h_i^2.
\end{gather*}
Next assume $a_{ij}=-1$. Then we have
\begin{gather*}
T_i(h_{j,1}) = T_i\big(\big[x_{j}^+,x_{j,1}^-\big]\big) = \big[\big[x_i^+,x_{j}^+\big], -\big[x_i^-,x_{j,1}^-\big]\big] \\
\hphantom{T_i(h_{j,1})}{} = - \big[\big[\big[x_i^+,x_j^+\big],x_i^-\big],x_{j,1}^-\big]-\big[x_i^-,\big[\big[x_i^+,x_j^+\big],x_{j,1}^-\big]\big].
\end{gather*}
Since we have
\begin{gather*}
\big[\big[\big[x_i^+,x_j^+\big],x_i^-\big],x_{j,1}^-\big]=\big[\big[h_i,x_j^+\big],x_{j,1}^-\big]=\big[{-}x_j^+,x_{j,1}^-\big]=-h_{j,1}
\end{gather*}
and
\begin{gather*}
\big[x_i^-,\big[[x_i^+,x_j^+\big],x_{j,1}^-\big]\big] =\big[x_i^-,\big[x_i^+,h_{j,1}\big]\big] = \big[x_i^-,x_{i,1}^++\tfrac{\hbar}{2}\big\{h_j,x_i^+\big\}-m_{ji}\tfrac{\ve_1 - \ve_2}{2} x_i^+\big] \\
\hphantom{\big[x_i^-,\big[[x_i^+,x_j^+\big],x_{j,1}^-\big]\big]}{} = -h_{i,1} + \tfrac{\hbar}{2}\big( \big\{\big[x_i^-,h_j\big],x_i^+\big\} + \big\{ h_j, \big[x_i^-, x_i^+\big] \big\}\big) + m_{ji}\tfrac{\ve_1 - \ve_2}{2} h_i \\
\hphantom{\big[x_i^-,\big[[x_i^+,x_j^+\big],x_{j,1}^-\big]\big]}{} = -h_{i,1} - \tfrac{\hbar}{2} \big\{x_i^-,x_i^+\big\} - \hbar h_i h_j + \tfrac{\ve_1 - \ve_2}{2}m_{ji} h_i,
\end{gather*}
we conclude
\begin{gather*}
T_i(h_{j,1})=h_{j,1}+h_{i,1}+\tfrac{\hbar}{2} \big\{x_i^+,x_i^-\big\} + \hbar h_i h_j + \tfrac{\ve_1 - \ve_2}{2}m_{ij} h_i.
\end{gather*}
We can easily obtain the formulas for $T_{i}(\tilde{h}_{j,1})$ from these identities.
\end{proof}

\begin{prop}\label{prop:iji}Assume $a_{ij}=-1$. Then we have
\begin{gather*}
T_i T_j \big(x_{i,1}^+\big) = x_{j,1}^+ - m_{ij}\tfrac{\ve_1 - \ve_2}{2}x_j^+ - \tfrac{\hbar}{2}\big\{T_i\big(x_j^+\big),x_i^-\big\}, \\
T_i T_j \big(x_{i,1}^-\big) = x_{j,1}^- - m_{ij}\tfrac{\ve_1 - \ve_2}{2}x_j^- - \tfrac{\hbar}{2}\big\{x_i^+,T_i\big(x_j^-\big)\big\}, \\
T_i T_j (h_{i,1}) = h_{j,1} - m_{ij}\tfrac{\ve_1 - \ve_2}{2}h_j - \tfrac{\hbar}{2}\{x_i^+,x_i^-\} + \tfrac{\hbar}{2}\big\{T_i(x_j^+),T_i\big(x_j^-\big)\big\}.
\end{gather*}
\end{prop}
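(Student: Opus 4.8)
The plan is to compute $T_iT_j=T_i\circ T_j$ by applying $T_j$ first and then $T_i$, reducing everything to the degree-one formulas of Proposition~\ref{prop:formula2}, the degree-zero formulas of Propositions~\ref{prop:formula1} and~\ref{prop:iji_pre}, and the auxiliary identities of Lemma~\ref{lem:relations}. Throughout I would leave the answers written in terms of $T_i(x_j^\pm)$ rather than the expanded brackets $[x_i^\pm,x_j^\pm]$, which keeps the final expressions compact and matches the stated form.

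First I would treat $x_{i,1}^+$. Since $a_{ji}=a_{ij}=-1$, Proposition~\ref{prop:formula2} gives $T_j(x_{i,1}^+)=[x_j^+,x_{i,1}^+]$. As $T_i$ is an algebra automorphism it commutes with brackets, so $T_iT_j(x_{i,1}^+)=[T_i(x_j^+),T_i(x_{i,1}^+)]$; substituting $T_i(x_j^+)=[x_i^+,x_j^+]$ from Proposition~\ref{prop:formula1} and $T_i(x_{i,1}^+)=-x_{i,1}^-+\tfrac{\hbar}{2}\{h_i,x_i^-\}$ from Proposition~\ref{prop:formula2} turns the problem into evaluating a single double commutator. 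I would expand it with the Jacobi identity, using $[x_i^+,x_{i,1}^-]=h_{i,1}$, $[x_j^+,x_{i,1}^-]=0$, and $[h_{i,1},x_j^+]=-\big(x_{j,1}^++\tfrac{\hbar}{2}\{h_i,x_j^+\}-m_{ij}\tfrac{\ve_1-\ve_2}{2}x_j^+\big)$ from Lemma~\ref{lem:relations}. For the anticommutator part one first records $[[x_i^+,x_j^+],h_i]=-[x_i^+,x_j^+]$ and $[[x_i^+,x_j^+],x_i^-]=-x_j^+$, and then applies the Leibniz rule $[X,\{A,B\}]=\{[X,A],B\}+\{A,[X,B]\}$. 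The two occurrences of $\tfrac{\hbar}{2}\{h_i,x_j^+\}$ then cancel, and recognizing $[x_i^+,x_j^+]=T_i(x_j^+)$ yields the stated formula.

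The identity for $x_{i,1}^-$ I would obtain for free by applying the anti-automorphism $\omega$: by Lemma~\ref{lem:rho_omega_and_T} we have $\omega\circ T_iT_j=T_iT_j\circ\omega$, and since $\omega(x_{i,1}^+)=x_{i,1}^-$, $\omega(x_j^+)=x_j^-$, $\omega(T_i(x_j^+))=T_i(x_j^-)$, while $\omega$ preserves anticommutators, the $x_{i,1}^+$ identity transforms directly into the $x_{i,1}^-$ identity. For $h_{i,1}$ I would use $h_{i,1}=[x_i^+,x_{i,1}^-]$ together with $T_iT_j(x_i^+)=x_j^+$ from Proposition~\ref{prop:iji_pre}, so that $T_iT_j(h_{i,1})=[x_j^+,T_iT_j(x_{i,1}^-)]$; plugging in the value of $T_iT_j(x_{i,1}^-)$ just found, the first two summands give $h_{j,1}-m_{ij}\tfrac{\ve_1-\ve_2}{2}h_j$ at once, and the remaining term $-\tfrac{\hbar}{2}[x_j^+,\{x_i^+,T_i(x_j^-)\}]$ is expanded via the same Leibniz rule, using $[x_j^+,x_i^+]=-T_i(x_j^+)$ and $[x_j^+,T_i(x_j^-)]=x_i^-$, to produce $\tfrac{\hbar}{2}\{T_i(x_j^+),T_i(x_j^-)\}-\tfrac{\hbar}{2}\{x_i^+,x_i^-\}$.

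I expect the main obstacle to be the bookkeeping in the double-commutator expansions: keeping track of the several anticommutator-valued terms, applying the Leibniz rule for a bracket against an anticommutator without sign or ordering errors, and verifying that the unwanted $\tfrac{\hbar}{2}\{h_i,x_j^+\}$ contributions cancel in the $x_{i,1}^+$ computation. No genuinely new idea is needed beyond the identities already established; the difficulty is entirely in carrying the algebra out carefully, and the $\omega$-symmetry removes half of that work by deducing the $x_{i,1}^-$ case from the $x_{i,1}^+$ case.
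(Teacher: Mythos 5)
Your proposal is correct and follows essentially the same route as the paper: apply $T_j$ first via Proposition~\ref{prop:formula2}, expand the resulting double bracket for $x_{i,1}^+$ using Lemma~\ref{lem:relations} and the Leibniz rule for anticommutators (with the same cancellation of the $\tfrac{\hbar}{2}\{h_i,x_j^+\}$ terms), deduce the $x_{i,1}^-$ case by $\omega$, and derive the $h_{i,1}$ case from Proposition~\ref{prop:iji_pre}. The only (immaterial) difference is that you write $h_{i,1}=[x_i^+,x_{i,1}^-]$ and bracket with $x_j^+$, whereas the paper uses $h_{i,1}=[x_{i,1}^+,x_i^-]$ and brackets with $x_j^-$; the two computations are exchanged by $\omega$.
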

\begin{proof}We show the first identity. Since we have
\begin{gather*}
T_iT_j\big(x_{i,1}^+\big) = T_i\big(\big[x_{j}^+,x_{i,1}^+\big]\big) = \big[\big[x_i^+,x_j^+\big],-x_{i,1}^-+\tfrac{\hbar}{2}\big\{h_i,x_i^{-}\big\}\big],
\end{gather*}
it follows from
\begin{gather*}
\big[\big[x_i^+,x_j^+\big],x_{i,1}^-\big] = \big[h_{i,1},x_j^+\big] = -\big( x_{j,1} + \tfrac{\hbar}{2}\big\{h_i,x_j^{+}\big\} - m_{ij} \tfrac{\ve_1 - \ve_2}{2}x_j^+ \big)
\end{gather*}
and
\begin{gather*}
\big[\big[x_i^+,x_j^+\big],\big\{h_i,x_i^{-}\big\}\big]= \big\{\big[\big[x_i^+,x_j^+\big],h_i\big],x_i^-\big\} + \big\{h_i,\big[\big[x_i^+,x_j^+\big],x_i^-\big]\big\} \\
 \hphantom{\big[\big[x_i^+,x_j^+\big],\big\{h_i,x_i^{-}\big\}\big]}{} = -\big\{\big[x_i^+,x_j^+\big],x_i^-\big\} - \big\{h_i,x_j^{+}\big\} = -\big\{T_i(x_j^+),x_i^-\big\} - \big\{h_i,x_j^{+}\big\}.
\end{gather*}
The second identity is obtained by applying $\omega$ to the first one. The third identity follows from
\begin{gather*}
T_iT_j(h_{i,1})= T_iT_j\big(\big[x_{i,1}^+,x_{i}^-\big]\big) = \big[x_{j,1}^+-m_{ij}\tfrac{\ve_1-\ve_2}{2}x_j^+-\tfrac{\hbar}{2}\big\{T_i\big(x_j^+\big),x_i^-\big\},x_j^-\big]
\end{gather*}
and
\begin{gather*}
\big[\big\{T_i\big(x_j^+\big),x_i^-\big\},x_j^-\big] = \big\{\big[T_i(x_j^+),x_j^-\big],x_i^-\big\} + \big\{T_i\big(x_j^+\big),\big[x_i^-,x_j^-\big]\big\}\\
\hphantom{\big[\big\{T_i\big(x_j^+\big),x_i^-\big\},x_j^-\big]}{} = \big\{x_i^+,x_i^-\big\} - \big\{T_i\big(x_j^+\big),T_i\big(x_j^-\big)\big\}.\tag*{\qed}
\end{gather*}\renewcommand{\qed}{}
\end{proof}

We give formulas for $T_i^2$. Proofs are straightforward.
\begin{prop} We have $T_i^2(X)=X$ for $X=x_{j}^{\pm}, x_{j,1}^{\pm}$ $(a_{ij} \neq -1)$, $h_k, h_{k,1}$ $(k \in \bbZ/N\bbZ)$. We have $T_i^2(X)=-X$ for $X=x_{j}^{\pm}, x_{j,1}^{\pm}$ $(a_{ij} = -1)$. In particular, $T_i^4 = 1$.
\end{prop}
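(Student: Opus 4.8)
The plan is to apply the formulas of Propositions~\ref{prop:formula1} and~\ref{prop:formula2} twice, using throughout that $T_i$ is an algebra automorphism and so commutes with brackets and anticommutators. For the degree-zero generators the cases $a_{ij}=0$ and $i=j$ are immediate from Proposition~\ref{prop:formula1}, giving $T_i^2(x_j^\pm)=x_j^\pm$; likewise $T_i^2(h_k)=h_k$ for every $k$, since $T_i(h_k)$ is always a $\bbC$-linear combination of $h_k$ and $h_i$ and $T_i(h_i)=-h_i$. When $a_{ij}=-1$ I would write $T_i^2(x_j^+)=[T_i(x_i^+),T_i(x_j^+)]=[-x_i^-,[x_i^+,x_j^+]]$ and expand by the Jacobi identity; using $[x_i^-,x_j^+]=0$ and $[x_i^+,x_i^-]=h_i$ in $\affsl$ this collapses to $-x_j^+$, and the computation for $x_j^-$ is symmetric.

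For the degree-one generators $x_{j,1}^\pm$ the computation has the same shape. In the case $i=j$ the two correction terms $\tfrac{\hbar}{2}\{h_i,x_i^\pm\}$ produced by Proposition~\ref{prop:formula2} cancel after the second application, using $T_i(h_i)=-h_i$ and $T_i(x_i^\pm)=-x_i^\mp$, leaving $T_i^2(x_{i,1}^\pm)=x_{i,1}^\pm$. In the case $a_{ij}=-1$ the same Jacobi manipulation as above goes through verbatim with $x_{j,1}^+$ in place of $x_j^+$; the only new inputs are $[x_i^-,x_{j,1}^+]=0$ and $[h_i,x_{j,1}^+]=a_{ij}x_{j,1}^+=-x_{j,1}^+$, both instances of the defining relations of $\affY$, and one obtains $T_i^2(x_{j,1}^\pm)=-x_{j,1}^\pm$.

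For $h_{k,1}$ it is cleaner to pass to $\tilde{h}_{k,1}=h_{k,1}-\tfrac{\hbar}{2}h_k^2$ and use the third family of formulas in Proposition~\ref{prop:formula2}. In each of the three cases the extra terms $\tilde{h}_{i,1}$, $\{x_i^+,x_i^-\}$ and $m_{ik}\tfrac{\ve_1-\ve_2}{2}h_i$ created by the first application are undone by the second, the decisive points being the symmetry $\{x_i^+,x_i^-\}=\{x_i^-,x_i^+\}$ together with $T_i(h_i)=-h_i$; hence $T_i^2(\tilde{h}_{k,1})=\tilde{h}_{k,1}$. Since $T_i^2$ is an automorphism and $T_i^2(h_k)=h_k$, I then recover $T_i^2(h_{k,1})=T_i^2(\tilde{h}_{k,1})+\tfrac{\hbar}{2}\big(T_i^2(h_k)\big)^2=\tilde{h}_{k,1}+\tfrac{\hbar}{2}h_k^2=h_{k,1}$.

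Finally, to deduce $T_i^4=1$ I would use that $\affY$ is generated by its degree-$\le 1$ elements $x_{j,0}^\pm$, $x_{j,1}^\pm$, $h_{j,0}$, $h_{j,1}$: every higher mode $x_{j,r}^\pm$ and $h_{j,r}$ is obtained from these by iterated brackets with $h_{j,1}$ through the defining relations. On each of these generators $T_i^2$ acts by the scalar $\pm 1$ by the cases above, so $T_i^4$ acts by $1$; being an algebra automorphism that fixes a generating set, $T_i^4=1$. The routine part is the double application of the two preceding propositions; the only steps requiring care are the sign-and-coefficient bookkeeping in the $h_{k,1}$ case and, for $T_i^4$, the invocation of the generation statement for $\affY$, which is where I would be most careful.
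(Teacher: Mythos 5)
Your proposal is correct and is essentially the computation the paper has in mind: the paper states only ``Proofs are straightforward,'' and the intended argument is exactly your double application of Propositions~\ref{prop:formula1} and~\ref{prop:formula2} (with the Jacobi identity and the Serre-type vanishing $[x_i^-,x_{j,1}^+]=0$ handling the $a_{ij}=-1$ cases, and the generation of $\affY$ by the degree $\le 1$ modes giving $T_i^4=1$). All the sign and coefficient checks you outline go through as stated.
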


\subsection{Compatibility with the coproduct}\label{subsec:compatibility_coproduct}

The goal of this subsection is to prove the following proposition.
\begin{prop}\label{prop:compatibility_coproduct}We have $\Delta \circ T_i = (T_i \otimes T_i) \circ \Delta$.
\end{prop}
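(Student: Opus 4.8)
The plan is to use that both $\Delta \circ T_i$ and $(T_i \otimes T_i) \circ \Delta$ are algebra homomorphisms $\affY \to \affY \widehat{\otimes} \affY$, so it suffices to check the identity on a generating set. I would take the degree $\leq 1$ generators $x_j^\pm$, $h_j$, $x_{j,1}^\pm$, $\tilde{h}_{j,1}$, which generate $\affY$ because the defining relations recover all $x_{j,r}^\pm$, $h_{j,r}$ inductively from brackets with $\tilde{h}_{j,1}$. A preliminary technical point is that $T_i \otimes T_i$ must be shown to extend continuously to the completion $\affY \widehat{\otimes} \affY$; this I would deduce from the fact that $T_i$ is compatible with the filtration used to define the completion together with its explicit formulas on low-degree generators.

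On the degree-zero generators the claim is immediate: for $X \in \{x_j^\pm, h_j\}$ we have $\Delta(X) = \square(X)$, and by Proposition~\ref{prop:formula1} the element $T_i(X)$ lies in $\affsl$, on which $\Delta$ restricts to the standard primitive coproduct, so both $\Delta(T_i(X))$ and $(T_i \otimes T_i)(\square(X))$ equal $\square(T_i(X))$.

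The heart of the matter is the identity for the half-Casimir
\[
(T_i \otimes T_i)(\Omega_+) = \Omega_+ + x_i^+ \otimes x_i^- - x_i^- \otimes x_i^+,
\]
which I would establish first. The point is that $T_i$ restricts on $\affsl$ to the Weyl-group lift of $s_i$, hence preserves the invariant form $(\,,\,)$ and permutes root spaces according to $s_i$. Writing $\Omega_+$ as $\sum_k u^k \otimes u_k + \sum_{\alpha \in \hat{\Delta}_+,\,k} x_{-\alpha}^{(k)} \otimes x_\alpha^{(k)}$, the Cartan term is $(T_i \otimes T_i)$-invariant since $s_i$ is orthogonal, and because $s_i$ maps $\hat{\Delta}_+ \setminus \{\alpha_i\}$ bijectively to itself while sending $\alpha_i \mapsto -\alpha_i$, form-preservation forces all terms with $\alpha \neq \alpha_i$ to match up after reindexing, leaving only the $\alpha_i$-contribution, where $x_{-\alpha_i} \otimes x_{\alpha_i} = x_i^- \otimes x_i^+$ is sent to $x_i^+ \otimes x_i^-$ via $T_i(x_i^\pm) = -x_i^\mp$. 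The main obstacle lies exactly here: one must control the infinite sum inside the completion, keep track of the degree operator $d$ (the correction term contains no $d$, consistently with the Remark that the relevant brackets lie in $\affY \widehat{\otimes} \affY$), and check the sign-normalization bookkeeping $c_{-\alpha}c_\alpha = 1$ forced by $(T_i x_{-\alpha}, T_i x_\alpha) = (x_{-\alpha},x_\alpha) = 1$.

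Granting this, the degree-one cases become short bracket computations. From the $\affsl$-invariance of the full Casimir and the explicit expansions recorded after Theorem~\ref{thm:definition_of_coproduct}, I would first note the auxiliary relations $[\square(x_i^+),\Omega_+] = -x_i^+ \otimes h_i$, $[\square(x_i^-),\Omega_+] = h_i \otimes x_i^-$, and $[\square(h_i),\Omega_+]=0$. Then for each of $x_{j,1}^+$, $x_{j,1}^-$, $\tilde{h}_{j,1}$ one treats the cases $i=j$, $a_{ij}=-1$, $a_{ij}=0$: substitute the formulas of Proposition~\ref{prop:formula2} on the left, and on the right apply $(T_i \otimes T_i)$ to $\square(\cdot) \mp \hbar[\,\cdot\,,\Omega_+]$, using the key identity to rewrite $(T_i \otimes T_i)[\,\cdot\,,\Omega_+]$ as $[\,\cdot\,,\Omega_+ + (x_i^+ \otimes x_i^- - x_i^- \otimes x_i^+)]$; a Jacobi-identity manipulation together with the auxiliary relations then closes each case. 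The labor can be roughly halved by invoking $\omega \circ T_i = T_i \circ \omega$ from Lemma~\ref{lem:rho_omega_and_T} to pass from the $x_{j,1}^+$ computations to the $x_{j,1}^-$ ones, and $\rho \circ T_i = T_{i-1} \circ \rho$ to reduce the dependence on the index $i$, once the corresponding compatibilities of $\omega$ and $\rho$ with $\Delta$ are recorded.
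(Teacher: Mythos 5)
Your proposal is correct and follows essentially the same route as the paper: the key step is the identity $(T_i \otimes T_i)\Omega_+ = \Omega_+ + x_i^+ \otimes x_i^- - x_i^- \otimes x_i^+$ (the paper's Lemma~\ref{lem:Omega}, proved exactly as you describe via $T_i$-invariance of the form and the $s_i$-permutation of positive roots), combined with the auxiliary relations $[\square(x_i^\pm),\Omega_+]$ and a case-by-case bracket computation for $i=j$, $a_{ij}=0$, $a_{ij}=-1$ using Proposition~\ref{prop:formula2}. The only cosmetic difference is that the paper reduces the check to the single family $x_{j,1}^+$ (since $\affY$ is generated by $x_j^\pm$, $h_j$, $x_{j,1}^+$), whereas you propose also verifying $x_{j,1}^-$ and $\tilde h_{j,1}$ via $\omega$; both are fine.
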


We use the following lemma to prove Proposition~\ref{prop:compatibility_coproduct}.

\begin{lem}\label{lem:Omega}We have $(T_i \otimes T_i)\Omega_+ = \Omega_+ + x_i^+ \otimes x_i^- - x_i^- \otimes x_i^+$.
\end{lem}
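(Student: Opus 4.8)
The plan is to exploit the fact that $\Omega_+$ is assembled entirely from elements of the Lie algebra $\frakg = \affsl \oplus \bbC d$, namely the Cartan piece $\Omega_0 = \sum_k u^k \otimes u_k$ and the root vectors $x_{\pm\alpha}^{(k)}$, so that only the restriction of $T_i$ to $\frakg$ intervenes. On $\frakg$ this restriction is the classical braid operator: each $\exp\ad x_i^{\pm}$ preserves $\frakg$, hence so does $T_i$, and $T_i$ realises the simple reflection $s_i$ in the sense that $T_i(\frakg_\alpha) = \frakg_{s_i\alpha}$ while preserving the Cartan subalgebra. I would first record that $T_i$ is orthogonal for the invariant form: invariance gives $([x_i^{\pm},y],z) = -(y,[x_i^{\pm},z])$, so each $\ad x_i^{\pm}$ is skew-adjoint, whence $\exp\ad x_i^{\pm}$, and therefore $T_i$, preserve $(\,,\,)$. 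Consequently the canonical element attached to the nondegenerate invariant form, the full Casimir tensor $\Omega = \Omega_0 + \sum_{\alpha \in \hat{\Delta}_+,\, k}\big(x_{-\alpha}^{(k)} \otimes x_\alpha^{(k)} + x_\alpha^{(k)} \otimes x_{-\alpha}^{(k)}\big)$, satisfies $(T_i \otimes T_i)\Omega = \Omega$; the same orthogonality together with $T_i$-stability of the Cartan yields $(T_i \otimes T_i)\Omega_0 = \Omega_0$ for the Cartan part alone.

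Next I would split off the simple-root contribution, writing $\Omega_+ = \Omega_0 + x_i^- \otimes x_i^+ + P_+$, where $P_+ = \sum_{\alpha \in \hat{\Delta}_+ \setminus \{\alpha_i\},\, k} x_{-\alpha}^{(k)} \otimes x_\alpha^{(k)}$. By Proposition~\ref{prop:formula1} we have $T_i(x_i^+) = -x_i^-$ and $T_i(x_i^-) = -x_i^+$, so $(T_i \otimes T_i)(x_i^- \otimes x_i^+) = x_i^+ \otimes x_i^-$. Granting the invariance $(T_i \otimes T_i)P_+ = P_+$, subtracting $\Omega_+$ leaves exactly $x_i^+ \otimes x_i^- - x_i^- \otimes x_i^+$, which is the assertion.

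The invariance of $P_+$ is the crux. Since $s_i$ permutes $\hat{\Delta}_+ \setminus \{\alpha_i\}$, the operator $T_i$ preserves the subspaces $\mathfrak{n}_{\pm}$ spanned by the root spaces $\frakg_{\pm\alpha}$ with $\alpha \in \hat{\Delta}_+ \setminus \{\alpha_i\}$, so $T_i \otimes T_i$ preserves $\mathfrak{n}_- \otimes \mathfrak{n}_+$ and $\mathfrak{n}_+ \otimes \mathfrak{n}_-$ separately. Setting $P_- = \sum_{\alpha \in \hat{\Delta}_+ \setminus \{\alpha_i\},\, k} x_\alpha^{(k)} \otimes x_{-\alpha}^{(k)}$, the symmetric combination $P_+ + P_-$ equals $\Omega - \Omega_0 - \big(x_i^- \otimes x_i^+ + x_i^+ \otimes x_i^-\big)$; each of the three subtracted pieces is $(T_i \otimes T_i)$-invariant, the last because $(T_i \otimes T_i)\big(x_i^- \otimes x_i^+ + x_i^+ \otimes x_i^-\big) = x_i^+ \otimes x_i^- + x_i^- \otimes x_i^+$. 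Hence $P_+ + P_-$ is invariant. But $P_+ \in \mathfrak{n}_- \otimes \mathfrak{n}_+$ and $P_- \in \mathfrak{n}_+ \otimes \mathfrak{n}_-$ sit in complementary, individually $T_i \otimes T_i$-stable summands, so invariance of the sum forces invariance of each; in particular $(T_i \otimes T_i)P_+ = P_+$.

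The main obstacle is precisely this splitting argument, since a priori only the symmetric Casimir is manifestly invariant and one must upgrade this to invariance of the positive half $P_+$; the grading by $\mathfrak{n}_- \otimes \mathfrak{n}_+$ versus $\mathfrak{n}_+ \otimes \mathfrak{n}_-$ is what makes this work. The remaining care is bookkeeping in the completed tensor product: $\Omega$ and $P_{\pm}$ are infinite sums, so I would check that $T_i \otimes T_i$ acts compatibly with the completion, which holds because $T_i$ reindexes the summands by the bijection $s_i$ of $\hat{\Delta}_+ \setminus \{\alpha_i\}$ and maps each finite-dimensional root space $\frakg_\alpha$ isomorphically onto $\frakg_{s_i\alpha}$, preserving multiplicities $\mult \alpha$. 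Since $\Omega_+$ involves no degree-one generators, the operator $T_i$ enters only through its restriction to $\frakg$, and the contributions of $d$ cancel as in the remark following Theorem~\ref{thm:definition_of_coproduct}.
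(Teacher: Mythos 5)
Your proof is correct, and it rests on the same three underlying facts as the paper's: $T_i$ is an isometry of the invariant form, $s_i$ permutes $\hat{\Delta}_+ \setminus \{\alpha_i\}$ so that $T_i$ maps the corresponding root spaces among themselves, and $T_i\big(x_i^{\pm}\big) = -x_i^{\mp}$. The organization, however, is genuinely different. The paper argues directly on the half Casimir: it observes that the system $\big\{T_i\big(x_{\alpha}^{(k)}\big)\big\}$ is again a dual system of root vectors, coinciding (after reindexing by $s_i$) with a valid choice for $\Omega_+$ except in the $\pm\alpha_i$ slots, where $-x_i^{\mp}$ appears in place of $x_i^{\pm}$; basis-independence of the canonical element on each pair $\frakg_{-\alpha}\otimes\frakg_{\alpha}$ then gives the discrepancy in one line. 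You instead pass through the full symmetric Casimir $\Omega$, whose invariance is immediate from orthogonality, and recover the invariance of the positive non-simple part $P_+$ by observing that $P_+$ and $P_-$ lie in the complementary $T_i\otimes T_i$-stable summands $\mathfrak{n}_-'\otimes\mathfrak{n}_+'$ and $\mathfrak{n}_+'\otimes\mathfrak{n}_-'$. Your route buys a cleaner separation of the ``soft'' input (invariance of canonical elements) from the combinatorics of $s_i$ on roots, at the cost of an extra splitting step; the paper's is shorter and handles the imaginary-root multiplicities and the $\pm\alpha_i$ correction in a single bookkeeping move. Your attention to the completion and to the multiplicity spaces $\frakg_{m\delta}$ is appropriate and correctly resolved by the basis-independence of the canonical element on each root-space pair.
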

\begin{proof}Note that the bilinear form $(\, ,\, )$ is $T_i$-invariant. Therefore, if $\{u_k\}$ and $\big\{u^k\big\}$ are dual bases, then $\{T_i(u_k)\}$ and $\big\{T_i\big(u^k\big)\big\}$ are also dual bases. Moreover, if we put
\begin{gather*}
y_{\alpha}^{(k)} = \begin{cases}
T_i\big(x_{s_i(\alpha)}^{(k)}\big) & \text{if $\alpha \in \hat{\Delta} \setminus \{\alpha_i, -\alpha_i\}$},\\
x_i^+ & \text{if $\alpha = \alpha_i$},\\
x_i^- & \text{if $\alpha = -\alpha_i$},
\end{cases}
\end{gather*}
then $\big\{ y_{\alpha}^{(k)} \big\}$ also satisfy $\big(y_{\alpha}^{(k)}, y_{-\alpha}^{(l)}\big) = \delta_{k,l}$ and we have
\begin{gather*}
\big\{ T_i \big(x_{\alpha}^{(k)}\big) \big\}_{\alpha \in \hat{\Delta}_+, 1 \leq k \leq \mult \alpha} = \big\{y_\alpha^{(k)}\big\}_{\alpha \in \hat{\Delta}_+, 1 \leq k \leq \mult \alpha} \cup \big\{{-}x_i^-\big\} \setminus \big\{x_i^+\big\},\\
\big\{ T_i \big(x_{-\alpha}^{(k)}\big) \big\}_{\alpha \in \hat{\Delta}_+, 1 \leq k \leq \mult \alpha} = \big\{y_{-\alpha}^{(k)}\big\}_{\alpha \in \hat{\Delta}_+, 1 \leq k \leq \mult \alpha} \cup \big\{{-}x_i^+\big\} \setminus \big\{x_i^-\big\}.
\end{gather*}
Hence we obtain
\begin{gather*}
(T_i \otimes T_i)\Omega_+ - \Omega_+ = \big({-}x_i^+\big) \otimes \big({-}x_i^-\big) - x_i^- \otimes x_i^+.\tag*{\qed}
\end{gather*}\renewcommand{\qed}{}
\end{proof}

\begin{proof}[Proof of Proposition~\ref{prop:compatibility_coproduct}]
We use
\begin{gather*}
\big[\square\big(x_i^+\big), \Omega_+\big] = -x_i^+ \otimes h_i, \qquad \big[\square\big(x_i^-\big), \Omega_+\big] = h_i \otimes x_i^-
\end{gather*}
(see \cite[Lemma~4.2]{MR3861718}).

For $X=x_j^{\pm}, h_j$, both $\Delta T_i(X)$ and $(T_i \otimes T_i) \Delta(X)$ are equal to $\square\, T_i(X)$. Hence it is enough to show $\Delta T_i\big(x_{j,1}^+\big) = (T_i \otimes T_i)\Delta\big(x_{j,1}^+\big)$ since $\affY$ is generated by $x_j^{\pm}$, $h_j$, $x_{j,1}^+$ ($j \in \bbZ/N\bbZ$).

First assume $i=j$. We claim that the both sides coincide with
\begin{gather*}
\square \, T_i\big(x_{i,1}^+\big) +\hbar \big(\big[1 \otimes x_i^-, \Omega_+\big] + x_i^- \otimes h_i \big).
\end{gather*}
The left-hand side is
\begin{gather*}
\Delta\big({-}x_{i,1}^- + \tfrac{\hbar}{2}\big\{h_i,x_i^-\big\}\big) \\
\qquad{} = -\big(\square\big(x_{i,1}^-\big) + \hbar\big[x_i^- \otimes 1,\Omega_+\big]\big) + \tfrac{\hbar}{2} \square\big(\big\{h_i,x_i^-\big\}\big) + \hbar\big(h_i \otimes x_i^- + x_i^- \otimes h_i\big) \\
\qquad{} = \square\,T_i\big(x_{i,1}^+\big) + \hbar \big({-}\big[x_i^- \otimes 1,\Omega_+\big] + h_i \otimes x_i^- + x_i^- \otimes h_i \big),
\end{gather*}
hence we see the claim by $-\big[x_i^- \otimes 1,\Omega_+\big] + h_i \otimes x_i^- = \big[1 \otimes x_i^-,\Omega_+\big]$. The right-hand side is
\begin{gather*}
(T_i \otimes T_i)\big(\square\big(x_{i,1}^+\big)-\hbar\big[1 \otimes x_i^+,\Omega_+\big]\big) = \square\,T_i\big(x_{i,1}^+\big) - \hbar\big[1 \otimes T_i\big(x_i^+\big), (T_i \otimes T_i)\Omega_+\big]\\
\qquad {} = \square\,T_i\big(x_{i,1}^+\big) + \hbar\big[1 \otimes x_i^-, \Omega_+ + x_i^+ \otimes x_i^- - x_i^- \otimes x_i^+\big]\\
\qquad {} = \square\,T_i\big(x_{i,1}^+\big) + \hbar \big(\big[1 \otimes x_i^-, \Omega_+\big] + x_i^- \otimes h_i\big),
\end{gather*}
hence the claim holds.

Next assume $a_{ij}=0$. Then the left-hand side is $\Delta(x_{j,1}^+)$. The right-hand side is
\begin{gather*}
(T_i \otimes T_i)\big(\square\big(x_{j,1}^+\big)-\hbar\big[1 \otimes x_j^+,\Omega_+\big]\big) = \square\, T_i\big(x_{j,1}^+\big) - \hbar\big[1 \otimes T_i\big(x_j^+\big), (T_i \otimes T_i)\Omega_+\big]\\
\qquad{} = \square\big(x_{j,1}^+\big) - \hbar\big[1 \otimes x_j^+, \Omega_+ + x_i^+ \otimes x_i^- - x_i^- \otimes x_i^+\big]\\
\qquad{} = \square\big(x_{j,1}^+\big) - \hbar\big[1 \otimes x_j^+, \Omega_+\big] = \Delta\big(x_{j,1}^+\big),
\end{gather*}
hence the claim holds.

Finally assume $a_{ij}=-1$. We claim that the both sides coincide with
\begin{gather*}
\square\,T_i\big(x_{j,1}^+\big) -\hbar \big(\big[1 \otimes T_i\big(x_j^+\big), \Omega_+\big] - x_i^+ \otimes x_j^+\big).
\end{gather*}
The left-hand side is
\begin{gather*}
\Delta\big(\big[x_i^+, x_{j,1}^+\big]\big) = \big[\square\big(x_i^+\big), \square\big(x_{j,1}^+\big) - \hbar \big[1 \otimes x_j^+,\Omega_+\big]\big]\\
\hphantom{\Delta\big(\big[x_i^+, x_{j,1}^+\big]\big)}{} = \square\,T_i\big(x_{j,1}^-\big) - \hbar \big[\square\big(x_i^+\big),\big[1 \otimes x_j^+,\Omega_+\big]\big].
\end{gather*}
Then we see the claim by
\begin{gather*}
\big[\square\big(x_i^+\big),\big[1 \otimes x_j^+,\Omega_+\big]\big]=\big[\big[\square\big(x_i^+\big),1 \otimes x_j^+\big],\Omega_+\big] + \big[1 \otimes x_j^+,\big[\square\big(x_i^+\big),\Omega_+\big]\big] \\
\qquad {}= \big[1 \otimes T_i\big(x_j^+\big), \Omega_+\big] + \big[1 \otimes x_j^+, -x_i^+ \otimes h_i\big] = \big[1 \otimes T_i\big(x_j^+\big), \Omega_+\big] - x_i^+ \otimes x_j^+.
\end{gather*}
The right-hand side is
\begin{gather*}
(T_i \otimes T_i)\big(\square\big(x_{j,1}^+\big)-\hbar\big[1 \otimes x_j^+,\Omega_+\big]\big) = \square\,T_i\big(x_{j,1}^+\big) - \hbar\big[1 \otimes T_i\big(x_j^+\big), (T_i \otimes T_i)\Omega_+\big]\\
\qquad {}= \square\,T_i\big(x_{j,1}^+\big) - \hbar\big[1 \otimes T_i\big(x_j^+\big), \Omega_+ + x_i^+ \otimes x_i^- - x_i^- \otimes x_i^+\big]\\
\qquad{} = \square\,T_i\big(x_{j,1}^+\big) - \hbar\big(\big[1 \otimes T_i\big(x_j^+\big), \Omega_+\big] - x_i^+ \otimes x_j^+\big),
\end{gather*}
hence the claim holds.
\end{proof}

\section{Heisenberg generators}\label{section4}

\subsection[Affine Lie algebra $\hat{\gl}_N$]{Affine Lie algebra $\boldsymbol{\hat{\gl}_N}$}
Let $\gl_N$ be the complex general linear Lie algebra consisting of $N \times N$ matrices. We denote by $E_{i,j}$ the matrix unit with $(i,j)$-th entry~$1$, and set $\bfid = \sum\limits_{i=1}^N E_{i,i}$. The indices~$i$,~$j$ of $E_{i,j}$ are regarded as elements of $\bbZ/N\bbZ$. The transpose of an element $X$ of $\gl_N$ is denoted by ${}^t X$.

Let $\hat{\gl}_N = \gl_N \otimes \bbC[t,t^{-1}] \oplus \bbC c$ be the affine Lie algebra whose Lie bracket is given by
\begin{gather*}
\big[X \otimes t^{r}, Y \otimes t^{s}\big] = [X,Y] \otimes t^{r+s} + r \delta_{r+s,0} \tr(XY) c, \qquad \text{$c$ is central}.
\end{gather*}
We denote the element $X \otimes t^s$ by $X(s)$. We identify the generators as usual:
\begin{alignat*}{4}
& x_0^+= E_{N,1}(1), \qquad && x_0^{-} = E_{1,N}(-1), \qquad && h_0= E_{N,N} - E_{1,1} + c,&\\
& x_{i}^{+}= E_{i,i+1}, \qquad && x_i^{-}= E_{i+1,i}, \qquad &&h_i= E_{i,i}-E_{i+1,i+1}, \qquad i \neq 0.&
\end{alignat*}

We define automorphisms analogous to $\omega$ and $\rho$ for $\hat{\gl}_N$. Let $\omega$ be the anti-automorphism of~${U}\big(\hat{\gl}_N\big)$ defined by $\omega(X(s))= {}^{t}X(-s)$ and $\omega(c)=c$.
The assignment
\begin{gather*}
x_i^{\pm} \mapsto x_{i-1}^{\pm}, \qquad h_i \mapsto h_{i-1}, \qquad c \mapsto c, \qquad \bfid(s) \mapsto \bfid(s) + \delta_{s,0}c
\end{gather*}
gives an algebra automorphism $\rho$ of $U\big(\hat{\gl}_N\big)$.

\begin{lem}\label{lem:rho}We have $\rho(E_{ij}(s)) = E_{i-1,j-1}(s+\delta_{i,1}-\delta_{j,1}) + \delta_{s,0}\delta_{i,1}\delta_{j,1}c$.
\end{lem}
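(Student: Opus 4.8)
The plan is to introduce the candidate map $\sigma$ on $\hat{\gl}_N$ defined on the matrix units by
\[
\sigma\big(E_{ij}(s)\big)=E_{i-1,j-1}\big(s+\delta_{i,1}-\delta_{j,1}\big)+\delta_{s,0}\delta_{i,1}\delta_{j,1}c,\qquad \sigma(c)=c,
\]
to verify that $\sigma$ is a Lie algebra homomorphism, and then to identify it with $\rho$ by comparing values on a generating set. Conceptually $\sigma$ is just the conjugation $X\mapsto gXg^{-1}$ on the loop algebra $\gl_N\otimes\bbC[t,t^{-1}]$ by $g=C\,\mathrm{diag}(t,1,\dots,1)\in GL_N\big(\bbC[t,t^{-1}]\big)$, where $C$ is the cyclic permutation matrix sending $e_k$ to $e_{k-1}$, so that $\mathrm{Ad}(C)E_{ij}=E_{i-1,j-1}$ and $\mathrm{Ad}\big(\mathrm{diag}(t,1,\dots,1)\big)E_{ij}=t^{\delta_{i,1}-\delta_{j,1}}E_{ij}$, together with the compatible correction on the central extension. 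This picture is only motivation; the actual content is the bracket computation.

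First I would note that $x_i^{\pm}$, $h_i$ $(i\in\bbZ/N\bbZ)$, $\bfid(s)$ $(s\in\bbZ)$ and $c$ generate $\hat{\gl}_N$ as a Lie algebra (the affine Chevalley generators give $\fraksl_N\otimes\bbC[t,t^{-1}]\oplus\bbC c$, and adjoining the $\bfid(s)$ fills out the $\gl_N$-part), so that $\rho$ is determined by its values on them; hence it suffices to prove $\rho=\sigma$ once $\sigma$ is known to be a homomorphism. Then I would check agreement on generators. For $i\neq0$ one has $\sigma\big(E_{i,i+1}(0)\big)=E_{i-1,i}\big(\delta_{i,1}\big)$, which equals $x_{i-1}^+$ (the shift producing the correct $t$-power exactly when $i=1$, i.e.\ $x_1^+\mapsto x_0^+=E_{N,1}(1)$), while $\sigma\big(E_{N,1}(1)\big)=E_{N-1,N}(0)=x_{N-1}^+=x_{-1}^+$; the checks for $x_i^-$, $h_i$ are identical, and $\sigma(\bfid(s))=\sum_i E_{i-1,i-1}(s)+\delta_{s,0}c=\bfid(s)+\delta_{s,0}c$ since only $i=1$ contributes to the central term. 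Thus $\sigma$ and $\rho$ coincide on all generators.

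The main step, and the one I expect to be the main obstacle, is proving that $\sigma$ respects
\[
\big[E_{ij}(r),E_{kl}(s)\big]=\delta_{jk}E_{il}(r+s)-\delta_{li}E_{kj}(r+s)+r\delta_{r+s,0}\delta_{jk}\delta_{li}c.
\]
The central summands of $\sigma\big(E_{ij}(r)\big)$ are central, so the loop parts match at once using $\delta_{j-1,k-1}=\delta_{jk}$, $\delta_{l-1,i-1}=\delta_{li}$, after observing that the total $t$-shift collapses correctly: when $j=k$ one has $\delta_{j,1}=\delta_{k,1}$ so the exponent becomes $r+s+\delta_{i,1}-\delta_{l,1}$, and symmetrically when $l=i$. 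The delicate point is the central term, which survives only when $j=k$ and $l=i$ hold simultaneously; there the shifts force $\big(r+\delta_{i,1}-\delta_{j,1}\big)+\big(s+\delta_{k,1}-\delta_{l,1}\big)=r+s$, and the cocycle contribution on the images side acquires coefficient $\big(r+\delta_{i,1}-\delta_{j,1}\big)\delta_{r+s,0}$. On the other side, applying $\sigma$ to the bracket produces $r\delta_{r+s,0}$ from the cocycle plus the two central corrections $\delta_{r+s,0}\delta_{i,1}$ and $-\delta_{r+s,0}\delta_{j,1}$ coming from $\sigma$ of the diagonal units $E_{ii}(r+s)$ and $E_{jj}(r+s)$, summing to exactly $\big(r+\delta_{i,1}-\delta_{j,1}\big)\delta_{r+s,0}$. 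This cancellation is precisely what pins down the correction $\delta_{s,0}\delta_{i,1}\delta_{j,1}c$ in the statement. Once the bracket is verified, $\sigma$ extends to an algebra endomorphism of $U\big(\hat{\gl}_N\big)$ agreeing with the automorphism $\rho$ on generators, so $\sigma=\rho$, which is the asserted formula.
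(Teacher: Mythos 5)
Your proof is correct, but it runs in the opposite direction from the paper's. The paper takes the automorphism $\rho$ as given on the generators $x_i^{\pm}$, $h_i$, $c$, $\bfid(s)$ and computes $\rho(E_{ij}(s))$ from the inside out: the off-diagonal entries follow by induction from brackets of Chevalley generators, and the diagonal entries are extracted by applying $\rho$ to the identity $\bfid(s)=\sum_{i=1}^{N-1} i\,h_i(s)+NE_{N,N}(s)$ and solving successively for $\rho(E_{N,N}(s)),\rho(E_{N-1,N-1}(s)),\dots$. You instead posit the closed formula as a map $\sigma$, verify directly that it respects the bracket of $\hat{\gl}_N$ --- the only delicate point being the cocycle bookkeeping against the central corrections $\delta_{s,0}\delta_{i,1}\delta_{j,1}c$, which you handle correctly, since both corrections on the right-hand side survive only when $j=k$ and $l=i$ --- and then match $\sigma$ with $\rho$ on a generating set. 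Your route costs more at the homomorphism-verification step but buys two things the paper's proof does not: a conceptual identification of $\rho$ as conjugation by $C\,\mathrm{diag}(t,1,\dots,1)$ corrected by the standard coboundary on the central extension, and, as a byproduct, an independent proof that the assignment defining $\rho$ is well-defined, which the paper simply asserts. Two small points are worth making explicit rather than calling the remaining checks ``identical'': the verifications $\sigma(h_1)=h_0$ and $\sigma(h_0)=h_{N-1}$ genuinely use the central correction in $\sigma(E_{1,1})=E_{N,N}+c$ (and they do come out right), and all Kronecker deltas $\delta_{i,1}$ must be read with the indices normalized to representatives in $\{1,\dots,N\}$.
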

\begin{proof}We can show the identities for $i \neq j$ inductively from those for the Chevalley generators. Then we can show
\begin{gather}
\rho(h_i(s)) = \begin{cases}
h_{-\theta}(s) & \text{if $i=1$},\\
h_{i-1}(s) & \text{otherwise}
\end{cases}\label{eq:rho_h}
\end{gather}
for $s \neq 0$. Indeed we have $\rho(h_1(s)) = \rho([E_{1,2}(s), E_{2,1}]) = [E_{N,1}(s+1), E_{1,N}(-1)] = h_{-\theta}(s)$ for $i=1$. The other cases are similarly proved. The identity (\ref{eq:rho_h}) will be used later.

Let us consider the case $i = j$. The case $s=0$ is proved as follows. Note that the identity $\bfid = \sum\limits_{i=1}^{N-1} i h_i + N E_{N,N}$ holds. Applying $\rho$ to the both sides, we obtain
\begin{gather*}
\bfid + c = \sum_{i=1}^{N-1} i h_{i-1} + N \rho(E_{N,N}).
\end{gather*}
The right-hand side is equal to
\begin{gather*}
\sum_{i=1}^{N-1} i h_i - N h_{N-1} + c + N \rho(E_{N,N}),
\end{gather*}
hence we have $\rho(E_{N,N}) = E_{N,N} + h_{N-1} = E_{N-1,N-1}$. Then we can inductively show
\begin{gather*}
\rho(E_{i,i}) = \rho(h_i + E_{i+1,i+1}) = h_{i-1} + E_{i,i} = E_{i-1,i-1}
\end{gather*}
for $i=N-1,N-2,\ldots,2$. For $i=1$, we have $\rho(E_{1,1}) = \rho(h_1 + E_{2,2}) = h_{0} + E_{1,1} = E_{N,N} + c$. The case $s \neq 0$ is similarly proved by considering
\begin{gather*}
\bfid(s) = \sum_{i=1}^{N-1} i h_i(s) + N E_{N,N}(s)
\end{gather*}
and (\ref{eq:rho_h}).
\end{proof}

We similarly define an algebra automorphism
\begin{gather*}
T_i = \exp\ad x_i^+ \exp\ad\big({-}x_i^-\big) \exp\ad x_i^+
\end{gather*}
of $U\big(\hat{\gl}_N\big)$.

\begin{lem}\label{lem:higher}We have $T_i(X(s))=T_i(X)(s)$ for $X \in \gl_N$ if $i \neq 0$.
\end{lem}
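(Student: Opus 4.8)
The plan is to show that each of the three exponential factors making up $T_i$ already preserves the $t$-grading of $\hat{\gl}_N$ and acts within each graded piece exactly as it does on $\gl_N$ itself; the assertion then follows by composing the three factors. The crucial point, and the only place where the hypothesis $i \neq 0$ enters, is that for $i \neq 0$ the elements $x_i^+ = E_{i,i+1}$ and $x_i^- = E_{i+1,i}$ lie in $\gl_N \otimes 1$, i.e.\ they have $t$-degree $0$.

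First I would record the basic bracket computation. For a degree-zero element $Z \in \gl_N$ (regarded as $Z(0)$) and any $X(s) \in \hat{\gl}_N$, the defining bracket of $\hat{\gl}_N$ gives
\begin{gather*}
[Z,X(s)] = [Z,X](s) + r\,\delta_{r+s,0}\tr(ZX)\,c = [Z,X](s), \qquad r = 0,
\end{gather*}
the central term vanishing because the $t$-degree $r=0$ of $Z$ forces its coefficient $r\,\delta_{r+s,0}$ to be zero, independently of $s$. Hence $\ad Z$ preserves each homogeneous subspace $\gl_N(s)$ and, under the identification $\gl_N(s) \cong \gl_N$, $X(s) \leftrightarrow X$, acts there exactly by the finite adjoint action. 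Taking $Z = x_i^{\pm}$ with $i \neq 0$, a trivial induction on $n$ then yields $\big(\ad x_i^{\pm}\big)^n(X(s)) = \big(\big(\ad x_i^{\pm}\big)^n X\big)(s)$ for all $n$, and summing over $n$ shows $\exp\ad x_i^{\pm}(X(s)) = \big(\exp\ad x_i^{\pm}(X)\big)(s)$.

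Applying this to each of the three factors $\exp\ad x_i^+$, $\exp\ad(-x_i^-)$, $\exp\ad x_i^+$ in turn---and using that the output of each factor is again of the form (element of $\gl_N$)$(s)$, so that the next factor applies in the same way---I would conclude $T_i(X(s)) = T_i(X)(s)$, where on the right $T_i(X)$ denotes the image of $X = X(0)$. That this image indeed lies in $\gl_N$, so that the notation $T_i(X)(s)$ is meaningful, is precisely the $s=0$ instance of the same computation. I expect no genuine obstacle: everything reduces to the vanishing of the central term for degree-zero generators, and the only thing to watch is the bookkeeping ensuring that each intermediate expression remains inside $\bigoplus_s \gl_N(s)$ so that the three factors compose cleanly.
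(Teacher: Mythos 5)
Your argument is correct and is exactly the content behind the paper's one-line proof (``Obvious from the definition of $T_i$ and the Lie bracket of $\hat{\gl}_N$''): for $i \neq 0$ the elements $x_i^{\pm}$ have $t$-degree zero, so the central term $r\,\delta_{r+s,0}\tr(\cdot)c$ vanishes and $\ad x_i^{\pm}$ acts on each graded piece $\gl_N(s)$ as the finite adjoint action, whence the same holds for the exponentials and their composite $T_i$. You have simply written out the details that the paper leaves implicit.
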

\begin{proof}Obvious from the definition of $T_i$ and the Lie bracket of $\hat{\gl}_N$.
\end{proof}

\begin{lem}\label{lem:diagonal}We have
\begin{gather*}
T_{0}(E_{j,j}(s)) = \begin{cases}
E_{1,1}(s)-\delta_{s,0}c &\text{if } j=N, \\
E_{N,N}(s)+\delta_{s,0}c &\text{if } j=1, \\
E_{j,j}(s) &\text{otherwise}
\end{cases}
\end{gather*}
and
\begin{gather*}
T_{i}(E_{j,j}(s)) = \begin{cases}
E_{i+1,i+1}(s) &\text{if } j=i, \\
E_{i,i}(s) &\text{if } j=i+1,\\
E_{j,j}(s) \ &\text{otherwise},
\end{cases}\qquad i \neq 0.
\end{gather*}
\end{lem}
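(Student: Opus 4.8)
The plan is to treat the two cases $i\neq 0$ and $i=0$ separately, since Lemma~\ref{lem:higher} only applies to the former.

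For $i\neq 0$ I would first use Lemma~\ref{lem:higher} to reduce the claim to the computation of $T_i(E_{j,j})$ for $E_{j,j}\in\gl_N$, so that $T_i(E_{j,j}(s))=T_i(E_{j,j})(s)$ reinstates the loop grading automatically. The elements $x_i^{\pm}=E_{i,i+1},E_{i+1,i}$ together with $h_i=E_{i,i}-E_{i+1,i+1}$ span a copy of $\fraksl_2$, and $T_i$ is the associated Weyl reflection. For $j\neq i,i+1$ the matrix unit $E_{j,j}$ commutes with $x_i^{\pm}$, hence is fixed by $\exp\ad x_i^{\pm}$ and therefore by $T_i$. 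For the remaining two indices I would write $E_{i,i}=\tfrac12\big((E_{i,i}+E_{i+1,i+1})+h_i\big)$ and note that $E_{i,i}+E_{i+1,i+1}$ commutes with $x_i^{\pm}$, while $T_i(h_i)=-h_i$ by the same $\fraksl_2$ computation as in Proposition~\ref{prop:formula1}; this immediately gives $T_i(E_{i,i})=E_{i+1,i+1}$ and, symmetrically, $T_i(E_{i+1,i+1})=E_{i,i}$, which is exactly the asserted swap.

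For $i=0$ the generators $x_0^{+}=E_{N,1}(1)$ and $x_0^{-}=E_{1,N}(-1)$ carry nontrivial loop shifts, so Lemma~\ref{lem:higher} is unavailable and I would argue directly. For $j\neq 1,N$ one checks that $E_{j,j}(s)$ commutes with both $x_0^{\pm}$: the bracket of a diagonal matrix unit with $E_{N,1},E_{1,N}$ vanishes, and the cocycle term vanishes because $\tr\big(E_{j,j}E_{N,1}\big)=\tr\big(E_{j,j}E_{1,N}\big)=0$, so $T_0$ fixes it. For $j=1,N$ I would compute the three exponentials in $T_0=\exp\ad x_0^{+}\exp\ad(-x_0^{-})\exp\ad x_0^{+}$ applied to $E_{1,1}(s)$ and $E_{N,N}(s)$. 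The essential finite input is that $\ad x_0^{+}$ sends $E_{1,1}(s)\mapsto E_{N,1}(s+1)$ and $E_{N,N}(s)\mapsto -E_{N,1}(s+1)$ and then annihilates $E_{N,1}(s+1)$, so each exponential truncates after a couple of terms; the analogous statements hold for $\ad x_0^{-}$ with $E_{1,N}(s-1)$.

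The one subtle point, and the place where I expect the real bookkeeping to live, is the central correction. When $\ad x_0^{-}$ is applied to $E_{N,1}(s+1)$ the cocycle contributes $-\delta_{s,0}\tr\big(E_{1,N}E_{N,1}\big)c=-\delta_{s,0}c$, and carrying this term through the composition is what produces the $\pm\delta_{s,0}c$ summands in the final formula. As a consistency check I would verify that the outcome respects $T_0(h_0)=-h_0$ and $T_0(c)=c$: the predicted value $T_0\big(E_{N,N}(s)-E_{1,1}(s)\big)=E_{1,1}(s)-E_{N,N}(s)-2\delta_{s,0}c$ agrees at $s=0$ with $T_0(h_0-c)=-h_0-c$, which pins down the signs of the central terms.
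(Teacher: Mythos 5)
Your proof is correct, but it takes a different route from the paper's in both halves. For $i\neq 0$ the paper also reduces to $s=0$ via Lemma~\ref{lem:higher}, but then applies $T_i$ to the identity $\bfid=\sum_{j=1}^{N-1}jh_j+NE_{N,N}$ to extract $T_i(E_{N,N})$ and climbs down inductively through $E_{j,j}=h_j+E_{j+1,j+1}$; your decomposition $E_{i,i}=\tfrac12\big((E_{i,i}+E_{i+1,i+1})+h_i\big)$, combined with the observation that everything commuting with $x_i^{\pm}$ is $T_i$-fixed, gets the same answer in one step and is arguably cleaner. The more substantive divergence is at $i=0$: the paper transports the $i=1$ case by the rotation automorphism $\rho$, using Lemma~\ref{lem:rho} (which is where its central corrections $\pm\delta_{s,0}c$ come from, via $\rho(E_{1,1}(0))=E_{N,N}+c$), whereas you compute the three exponentials in $T_0$ directly on $E_{1,1}(s)$ and $E_{N,N}(s)$ and track the cocycle term $-\delta_{s,0}\tr(E_{1,N}E_{N,1})c$ by hand. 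Your route costs a short explicit computation (which does terminate as you claim, since $(\ad x_0^{\pm})^2$ kills the relevant terms) but avoids relying on Lemma~\ref{lem:rho}; the paper's route reuses machinery already established. Your consistency check against $T_0(h_0)=-h_0$ correctly pins down the signs, and I verified the direct computation does yield $T_0(E_{N,N}(s))=E_{1,1}(s)-\delta_{s,0}c$.
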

\begin{proof}We show the case $i \neq 0$. By Lemma~\ref{lem:higher}, it is enough to consider the case $s=0$. Apply~$T_i$ to the identity $\bfid = \sum\limits_{j=1}^{N-1} j h_j + N E_{N,N}$. Then the left-hand side is $T_i(\bfid) = \bfid$ and the right-hand side is
\begin{gather*}
T_i \left(\sum_{j=1}^{N-1} j h_j + N E_{N,N}\right) = \begin{cases}
\displaystyle\sum_{j=1}^{N-1} j h_j - N h_{N-1} + N T_{N-1}(E_{N,N}) & \text{if $i = N-1$}, \vspace{1mm}\\
\displaystyle\sum_{j=1}^{N-1} j h_j + N T_i(E_{N,N}) & \text{otherwise}.
\end{cases}
\end{gather*}
This shows
\begin{gather*}
T_i (E_{N,N}) = \begin{cases}
E_{N-1,N-1} & \text{if $i = N-1$}, \\
E_{N,N} & \text{otherwise}.
\end{cases}
\end{gather*}
Now let $i=1$. We can inductively show that $E_{N-1,N-1}, E_{N-2,N-2}, \ldots, E_{3,3}$ are invariant under~$T_1$ and
\begin{gather*}
T_1(E_{2,2}) = T_1(h_2 + E_{3,3}) = h_2 + h_1 + E_{3,3} = E_{1,1},\\
T_1(E_{1,1}) = T_1(h_1 + E_{2,2}) = -h_1 + E_{1,1} = E_{2,2}.
\end{gather*}
Thus we have shown the assertion for $i=1$. Similarly we can show the other cases.

The case $i=0$ is obtained from the case $i=1$ by applying $\rho$ and using Lemma~\ref{lem:rho}.
\end{proof}

\begin{lem}\label{lem:0_to_k}We have $T_0 T_1 \cdots T_{k-1} (x_k^+) = \begin{cases}
E_{N,k+1}(1) & \text{if $0 \leq k \leq N-2$},\\
-E_{N,1}(2) & \text{if $k=N-1$}.
\end{cases}$
\end{lem}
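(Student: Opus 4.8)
The plan is to argue by induction on $k$, treating $0\le k\le N-2$ uniformly and then $k=N-1$ as a separate endpoint. Throughout I use that the formulas of Proposition~\ref{prop:formula1} are valid verbatim in $U\big(\hat{\gl}_N\big)$: each $T_i$ preserves the subalgebra $U\big(\affsl\big)$ generated by the $x_j^{\pm}$, $h_j$, so $T_i\big(x_j^+\big)=\big[x_i^+,x_j^+\big]$ when $a_{ij}=-1$, $=x_j^+$ when $a_{ij}=0$, and $=-x_i^-$ when $i=j$. I also use repeatedly that every $T_w$ is an algebra automorphism, whence $T_w\big([X,Y]\big)=\big[T_w(X),T_w(Y)\big]$, together with the bracket in $\hat{\gl}_N$, namely $\big[E_{a,b}(r),E_{c,d}(s)\big]=\delta_{bc}E_{a,d}(r+s)-\delta_{da}E_{c,b}(r+s)+r\delta_{r+s,0}\tr(E_{a,b}E_{c,d})c$.

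The base case $k=0$ is the definitional identity $x_0^+=E_{N,1}(1)$ (the product of $T$'s being empty). For the inductive step with $1\le k\le N-2$, I would first rewrite $T_{k-1}\big(x_k^+\big)=\big[x_{k-1}^+,x_k^+\big]$, using $a_{k-1,k}=-1$, so that
\[
T_0\cdots T_{k-1}\big(x_k^+\big)=\big[\,T_0\cdots T_{k-2}\big(x_{k-1}^+\big),\ T_0\cdots T_{k-2}\big(x_k^+\big)\,\big].
\]
The key point is that node $k$ is adjacent only to $k-1$ and $k+1$ in the affine Dynkin diagram, and for $k\le N-2$ it is in particular not adjacent to node $0$; hence $a_{jk}=0$ for every $j\in\{0,\dots,k-2\}$ and the second factor collapses to $T_0\cdots T_{k-2}\big(x_k^+\big)=x_k^+=E_{k,k+1}$. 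Applying the induction hypothesis $T_0\cdots T_{k-2}\big(x_{k-1}^+\big)=E_{N,k}(1)$ to the first factor, the whole expression becomes the single bracket $\big[E_{N,k}(1),E_{k,k+1}\big]$, which (since $\delta_{k+1,N}=0$ for $k\le N-2$ and the total loop degree is $1\neq 0$) equals $E_{N,k+1}(1)$, as claimed.

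For the endpoint $k=N-1$ I would proceed the same way, writing
\[
T_0\cdots T_{N-2}\big(x_{N-1}^+\big)=\big[\,T_0\cdots T_{N-3}\big(x_{N-2}^+\big),\ T_0\cdots T_{N-3}\big(x_{N-1}^+\big)\,\big].
\]
The first factor is $E_{N,N-1}(1)$ by the case just settled. The second factor is the delicate one: here node $N-1$ \emph{is} adjacent to node $0$, so the commuting-through argument fails at the outermost operator. The reflections $T_1,\dots,T_{N-3}$ still fix $x_{N-1}^+$, but $T_0\big(x_{N-1}^+\big)=\big[x_0^+,x_{N-1}^+\big]=\big[E_{N,1}(1),E_{N-1,N}\big]=-E_{N-1,1}(1)$; thus the second factor equals $-E_{N-1,1}(1)$. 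Taking the bracket then gives $-\big[E_{N,N-1}(1),E_{N-1,1}(1)\big]=-E_{N,1}(2)$, the loop degree now being $2$.

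The main obstacle — and the only place where the argument is not purely formal — is exactly this endpoint: the extra adjacency $0\sim N-1$ in the affine diagram is what breaks the pass-through step, and it is responsible both for the sign and for the jump from loop degree $1$ to loop degree $2$. A secondary point, though routine, is to confirm that no central term $c$ ever appears: in each bracket above the two loop degrees add to a positive integer, so the factor $r\delta_{r+s,0}$ always vanishes.
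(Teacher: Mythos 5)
Your argument is correct and is essentially the paper's own proof: the same induction on $k$ via $T_{k-1}\big(x_k^+\big)=\big[x_{k-1}^+,x_k^+\big]$, the same observation that the remaining $T_j$ fix the second factor (except for $T_0$ at the endpoint $k=N-1$, where $T_0\big(x_{N-1}^+\big)=-E_{N-1,1}(1)$ produces the sign and the degree jump), and the same final brackets in $\hat{\gl}_N$. The only cosmetic difference is that you peel off the innermost operator while the paper phrases the induction as passing from $k$ to $k+1$.
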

\begin{proof} First we prove the assertion for $0 \leq k \leq N-2$ by induction on $k$. The case $k=0$ is trivial. Assume that it holds for~$k$, then
\begin{gather*}
T_0 T_1 \cdots T_{k-1} T_{k} \big(x_{k+1}^+\big) = T_0 T_1 \cdots T_{k-1} \big(\big[x_{k}^+, x_{k+1}^+\big]\big) = \big[T_0 T_1 \cdots T_{k-1}\big(x_{k}^+\big), x_{k+1}^+\big] \\
\hphantom{T_0 T_1 \cdots T_{k-1} T_{k} \big(x_{k+1}^+\big)}{} = [E_{N,k+1}(1), E_{k+1,k+2}] = E_{N,k+2}(1).
\end{gather*}
Next we prove the assertion for $k=N-1$. We have
\begin{gather*}
T_0 T_1 \cdots T_{N-2} \big(x_{N-1}^+\big) = T_0 T_1 \cdots T_{N-3} \big(\big[x_{N-2}^+, x_{N-1}^+\big]\big) = \big[T_0 T_1 \cdots T_{N-3}\big(x_{N-2}^+\big), T_0\big(x_{N-1}^+\big)\big] \\
\hphantom{T_0 T_1 \cdots T_{N-2} \big(x_{N-1}^+\big)}{} = [E_{N,N-1}(1), -E_{N-1,1}(1)] = -E_{N,1}(2).
\end{gather*}
The proof is complete.
\end{proof}

\begin{lem}Let $i \leq j$. We have
\begin{gather}
T_{i} T_{i+1} \cdots T_{j-1} \big(x_{j}^+\big) = E_{i,j+1},\qquad T_{i} T_{i+1} \cdots T_{j-1} \big(x_{j}^-\big) = E_{j+1,i}, \label{eq:increase}\\
T_{j} T_{j-1} \cdots T_{i+1} \big(x_{i}^+\big) = (-1)^{j-i}E_{i,j+1},\qquad T_{j} T_{j-1} \cdots T_{i+1} \big(x_{i}^-\big) = (-1)^{j-i}E_{j+1,i}. \label{eq:decrease}
\end{gather}
\end{lem}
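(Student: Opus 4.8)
The plan is to prove the two identities in~\eqref{eq:increase} and~\eqref{eq:decrease} for $x^+$ by separate inductions, and then to deduce the two $x^-$ identities by applying the anti-automorphism $\omega$. Throughout one works in the range $1 \le i \le j \le N-1$, so that no index appearing in the products $T_i, T_{i+1}, \dots, T_{j-1}$ and $T_j, T_{j-1}, \dots, T_{i+1}$ equals $0$; in particular every element involved stays in $\gl_N \subset \hat{\gl}_N$, and the action of each $T_k$ on a Chevalley generator is governed by Proposition~\ref{prop:formula1}. The only matrix computation needed is the bracket of matrix units $[E_{a,b}, E_{c,d}] = \delta_{b,c} E_{a,d} - \delta_{d,a} E_{c,b}$.

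For the first identity in~\eqref{eq:increase} I would induct on $j$ with $i$ fixed. When $j = i$ the product is empty and $x_i^+ = E_{i,i+1} = E_{i,j+1}$. For the inductive step, since $a_{j, j+1} = -1$, Proposition~\ref{prop:formula1} gives $T_j(x_{j+1}^+) = [x_j^+, x_{j+1}^+]$, so applying the algebra automorphism $T_i \cdots T_{j-1}$ and distributing over the bracket yields
\begin{gather*}
T_i \cdots T_{j-1} T_j\big(x_{j+1}^+\big) = \big[T_i \cdots T_{j-1}\big(x_j^+\big), T_i \cdots T_{j-1}\big(x_{j+1}^+\big)\big].
\end{gather*}
The first factor is $E_{i,j+1}$ by the inductive hypothesis, while the second factor is $x_{j+1}^+ = E_{j+1,j+2}$, because $a_{k, j+1} = 0$ for every $k$ with $i \le k \le j-1$, so each such $T_k$ fixes $x_{j+1}^+$ by Proposition~\ref{prop:formula1}. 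The bracket $[E_{i,j+1}, E_{j+1,j+2}] = E_{i,j+2}$ (the other term vanishes since $i \ne j+2$) closes the induction. The $x^+$ identity in~\eqref{eq:decrease} is handled the same way by induction on $j-i$: using $T_{i+1}(x_i^+) = [x_{i+1}^+, x_i^+]$, the fact that $T_j \cdots T_{i+2}$ fixes $x_i^+$ (as $a_{k,i} = 0$ for $k \ge i+2$), and the inductive value $(-1)^{j-i-1} E_{i+1,j+1}$ for $T_j \cdots T_{i+2}(x_{i+1}^+)$, one computes $[E_{i+1,j+1}, E_{i,i+1}] = -E_{i,j+1}$, which supplies exactly the extra sign turning $(-1)^{j-i-1}$ into $(-1)^{j-i}$.

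Finally I would obtain the two $x^-$ identities by applying $\omega$. Since $\omega \circ T_k = T_k \circ \omega$ by Lemma~\ref{lem:rho_omega_and_T}(i), the anti-automorphism $\omega$ commutes with each composite $T_i \cdots T_{j-1}$ and $T_j \cdots T_{i+1}$; combined with $\omega(x_k^+) = x_k^-$ and $\omega(E_{i,j+1}) = {}^t E_{i,j+1} = E_{j+1,i}$, applying $\omega$ to the two $x^+$ identities gives the two $x^-$ identities with the same signs. The argument involves no genuine obstacle: everything reduces to Proposition~\ref{prop:formula1} together with the matrix-unit bracket, and the only point demanding care is the bookkeeping of the sign $(-1)^{j-i}$ in~\eqref{eq:decrease}, which is pinned down by the single bracket computation above.
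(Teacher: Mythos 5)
Your proof is correct and follows exactly the route the paper intends: the paper's own proof of this lemma is the one-line remark that it ``is easily proved by induction,'' and your argument is precisely that induction, carried out via Proposition~\ref{prop:formula1}, the matrix-unit bracket, and the reduction of the $x^-$ cases to the $x^+$ cases by $\omega$ (using that $\omega$ commutes with each $T_k$, so also with the composites despite being an anti-automorphism). The sign bookkeeping in \eqref{eq:decrease} and the observation that the relevant $T_k$ fix the second bracket factor are both handled correctly.
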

\begin{proof}The assertion is easily proved by induction.
\end{proof}

\begin{lem}We have
\begin{alignat}{3}
&T_i(E_{i+1,j})= E_{i,j} \qquad && \text{if $j+1 \leq i \leq N-1$ \ or \ $i+2 \leq j$}, & \label{eq:TE-1}\\
&T_i(E_{j,i+1})= E_{j,i} \qquad &&\text{if $1 \leq i \leq j-2$ \ or \ $j \leq i-1$}, & \label{eq:TE-2}\\
&T_i(E_{i,1})= -E_{i+1,1} \qquad &&\text{if $2 \leq i \leq N-1$}, & \label{eq:TE-3}\\
&T_i(E_{2,i})= -E_{2,i+1} \qquad &&\text{if $3 \leq i \leq N-1$}. & \label{eq:TE-4}
\end{alignat}
\end{lem}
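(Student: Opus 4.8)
The plan is to reduce all four identities to a single package of generic transport formulas and then check that the stated range hypotheses are precisely what is needed to apply them. Since every index $i$ occurring in (\ref{eq:TE-1})--(\ref{eq:TE-4}) satisfies $i \neq 0$, the elements $x_i^{\pm} = E_{i,i+1}, E_{i+1,i}$ and $h_i = E_{i,i} - E_{i+1,i+1}$ lie in the degree-zero part $\gl_N \subset \hat{\gl}_N$, so $T_i$ preserves $\gl_N$ and the whole computation can be carried out inside $\gl_N$ with matrix units having honest indices in $\{1, \dots, N\}$; no wrap-around occurs because $1 \le i \le N-1$.

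First I would establish that for any fixed $c \in \{1, \dots, N\}$ with $c \notin \{i, i+1\}$ one has
\begin{gather*}
T_i(E_{i,c}) = -E_{i+1,c}, \qquad T_i(E_{i+1,c}) = E_{i,c},\\
T_i(E_{c,i}) = -E_{c,i+1}, \qquad T_i(E_{c,i+1}) = E_{c,i}.
\end{gather*}
The cleanest route is representation-theoretic. When $c \notin \{i, i+1\}$, both the $c$-th row and the $c$-th column of $x_i^{\pm}$ and $h_i$ vanish, so each of the two-dimensional spaces $\bbC E_{i,c} \oplus \bbC E_{i+1,c}$ (fixed column $c$) and $\bbC E_{c,i} \oplus \bbC E_{c,i+1}$ (fixed row $c$) is stable under $\ad x_i^{\pm}$ and $\ad h_i$. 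On the first of these the triple $(x_i^+, x_i^-, h_i)$ acts on the pair $(E_{i,c}, E_{i+1,c})$ exactly as the standard $\mathfrak{sl}_2$-block acts on $(e_i, e_{i+1})$ in the defining representation $\bbC^N$, while on the second it acts through the dual (negative transpose) action. Since $\ad x_i^{\pm}$ is nilpotent of order two on each module, the braid lift $T_i = \exp\ad x_i^+ \exp\ad(-x_i^-) \exp\ad x_i^+$ reduces to three copies of $1 + \ad(\cdot)$, and composing them yields the two rows of formulas. Alternatively the second row follows from a direct three-factor $\exp\ad$ computation of exactly the type carried out in the proof of Lemma~\ref{lem:expad2}.

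Finally I would match each displayed identity to one of these: (\ref{eq:TE-1}) is $T_i(E_{i+1,j}) = E_{i,j}$ with free index $j$; (\ref{eq:TE-2}) is $T_i(E_{j,i+1}) = E_{j,i}$ with free index $j$; (\ref{eq:TE-3}) is $T_i(E_{i,1}) = -E_{i+1,1}$ with free index $1$; and (\ref{eq:TE-4}) is $T_i(E_{2,i}) = -E_{2,i+1}$ with free index $2$. It then remains only to confirm that the hypotheses place the free index outside $\{i, i+1\}$: in (\ref{eq:TE-1}) the two cases $j+1 \le i$ and $i+2 \le j$ say precisely $j \le i-1$ or $j \ge i+2$, and likewise in (\ref{eq:TE-2}); in (\ref{eq:TE-3}) the bound $2 \le i$ forces $1 \neq i$, while $1 \neq i+1$ is automatic; in (\ref{eq:TE-4}) the bound $3 \le i$ forces $2 \notin \{i, i+1\}$.

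The argument is entirely routine; the only places demanding care are the bookkeeping of signs in the braid lift (the asymmetry $T_i(E_{i+1,c}) = +E_{i,c}$ versus $T_i(E_{i,c}) = -E_{i+1,c}$) and the equally easy-to-misstate verification that each range condition is equivalent to the free index lying outside $\{i, i+1\}$. I expect no genuine obstacle beyond this sign-and-range bookkeeping.
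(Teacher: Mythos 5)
Your proof is correct, but it takes a genuinely different route from the paper's. The paper deduces (\ref{eq:TE-1}), (\ref{eq:TE-3}), (\ref{eq:TE-4}) from the chain formulas $T_{i} T_{i+1} \cdots T_{j-1}\big(x_{j}^{\pm}\big)=E_{i,j+1}, E_{j+1,i}$ of the preceding lemma, commuting $T_i$ through the chain via the braid relations and then evaluating a single bracket, and it gets (\ref{eq:TE-2}) from (\ref{eq:TE-1}) by applying $\omega$. You instead observe that for $c \notin \{i,i+1\}$ the planes $\bbC E_{i,c} \oplus \bbC E_{i+1,c}$ and $\bbC E_{c,i} \oplus \bbC E_{c,i+1}$ are $\ad$-stable copies of the two-dimensional $\fraksl_2$-module, on which each factor of $T_i$ truncates to $1+\ad(\cdot)$, and the rank-one matrix computation $\left(\begin{smallmatrix}0&1\\-1&0\end{smallmatrix}\right)$ gives all four transport formulas at once; the range hypotheses are exactly the condition $c \notin \{i,i+1\}$, and all signs check out ($T_i(E_{i,c})=-E_{i+1,c}$, $T_i(E_{i+1,c})=E_{i,c}$, and dually for rows). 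Your argument is more self-contained and in fact proves a slightly stronger, uniform statement (the four identities for every admissible $c$, not just $c=1$ and $c=2$ in the signed cases), at the cost of not reusing the machinery the paper has already set up; the paper's route keeps everything expressed through iterated $T_w$'s, which is the form actually needed in the subsequent computation of $T_{w(i,m)}$. Both are complete proofs.
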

\begin{proof}The identity (\ref{eq:TE-2}) is deduced from (\ref{eq:TE-1}) by applying $\omega$. We use (\ref{eq:increase}) to show the other identities as
\begin{gather*}
T_i(E_{i+1,j}) = \begin{cases}
T_i T_j T_{j+1} {\cdots} T_{i-1}\big(x_i^-\big) = T_j T_{j+1} {\cdots} T_{i-2} \big(x_{i-1}^-\big) = E_{i,j} &\text{if $j+1 \leq i \leq N-1$}, \\
T_i T_{i+1} T_{i+2} \cdots T_{j-1}\big(x_j^+\big) = E_{i,j} & \text{if $i+2 \leq j$},
\end{cases}
\\
T_i(E_{i,1}) = T_i T_{1} T_{2} \cdots T_{i-2}\big(x_{i-1}^-\big) = T_{1} T_{2} \cdots T_{i-2} T_i \big(x_{i-1}^-\big)= T_{1} T_{2} \cdots T_{i-2} \big(\big[x_{i-1}^-,x_i^-\big]\big)\\
\hphantom{T_i(E_{i,1})}{} = \big[T_{1} T_{2} \cdots T_{i-2} \big(x_{i-1}^-\big),x_i^-\big] = [E_{i,1}, E_{i+1,i}] = -E_{i+1,1},\\
T_i(E_{2,i}) = T_i T_{2} T_{3} \cdots T_{i-2}\big(x_{i-1}^+\big) = T_{2} T_{3} \cdots T_{i-2} T_i \big(x_{i-1}^+\big) = T_{2} T_{3} \cdots T_{i-2} \big(\big[x_{i}^+,x_{i-1}^+\big]\big)\\
\hphantom{T_i(E_{2,i})}{} = \big[x_i^+, T_{2} T_{3} \cdots T_{i-2} \big(x_{i-1}^+\big)\big] = [E_{i,i+1}, E_{2,i}] = -E_{2,i+1}.\tag*{\qed}
\end{gather*}\renewcommand{\qed}{}
\end{proof}

\subsection{Evaluation map}

The evaluation map for the affine Yangian $\affY$ was introduced by Guay~\cite{MR2323534}. It is an affine analog of the well-known evaluation map from $Y(\fraksl_N)$ to $U(\gl_N)$. Let $U\big(\hat{\gl}_N\big)_{{\rm comp},-}$ be the completion of ${U}\big(\hat{\gl}_N\big)$ defined in \cite[Definition~2.5]{kodera_evaluation}. From now on, we regard the central element~$c$ of~$\hat{\gl}_N$ as a complex number.

The following result can be deduced from a formula for $\ev(H_{i,1})$ $(i \neq 0)$ where $H_{i,1}=h_{i,1} + (i/2)(\ve_1-\ve_2)h_i$, given in \cite[Section~6, pp.~462--463]{MR2323534}. See \cite{kodera_evaluation} for details.
\begin{thm}[\cite{MR2323534, kodera_evaluation}]Assume $\hbar c = N \ve_2$. Then there exists an algebra homomorphism $\ev \colon \affY \to U\big(\hat{\gl}_N\big)_{{\rm comp},-}$ uniquely determined by
\begin{gather*}
\ev(x_{i,0}^{+}) = x_{i}^{+}, \qquad \ev(x_{i,0}^{-}) = x_{i}^{-},\qquad \ev(h_{i,0}) = h_{i},\\
\ev(x_{i,1}^{+}) = \begin{cases}
(1+ N \varepsilon_2) x_{0}^{+} + \hbar \displaystyle\sum_{s \geq 0} \sum_{k=1}^N E_{k,1}(s+1) E_{N,k}(-s) \quad \text{if $i = 0$},\\
(1+ i \varepsilon_2) x_{i}^{+} + \hbar \displaystyle\sum_{s \geq 0} \bigg( \sum_{k=1}^i E_{k,i+1}(s) E_{i,k}(-s)\\
\qquad{} + \displaystyle \sum_{k=i+1}^N E_{k,i+1}(s+1) E_{i,k}(-s-1) \bigg) \quad \text{if $i \neq 0$},
\end{cases}\\
\ev(x_{i,1}^-) = \begin{cases}
(1+ N \varepsilon_2) x_{0}^{-} + \hbar \displaystyle\sum_{s \geq 0} \sum_{k=1}^N E_{k,N}(s) E_{1,k}(-s-1)\quad \text{if $i = 0$},\\
(1+ i \varepsilon_2) x_{i}^{-} + \hbar \displaystyle\sum_{s \geq 0} \bigg( \sum_{k=1}^i E_{k,i}(s) E_{i+1,k}(-s) \\
\qquad {} + \displaystyle\sum_{k=i+1}^N E_{k,i}(s+1) E_{i+1,k}(-s-1) \bigg) \quad \text{if $i \neq 0$},
\end{cases}\\
\ev(h_{i,1}) = \begin{cases}
(1+ N \varepsilon_2) h_{0} - \hbar E_{N,N} (E_{1,1}-c) \\
\qquad {}+ \hbar \displaystyle\sum_{s \geq 0} \sum_{k=1}^{N} ( E_{k,N}(s) E_{N,k}(-s) - E_{k,1}(s+1) E_{1,k}(-s-1) ) \quad \text{if $i = 0$},\\
(1+ i \varepsilon_2) h_{i} - \hbar E_{i,i}E_{i+1,i+1}+ \hbar \displaystyle\sum_{s \geq 0} \bigg( \sum_{k=1}^{i} E_{k,i}(s) E_{i,k}(-s) \\
\qquad{}+ \displaystyle\sum_{k=i+1}^{N} E_{k,i}(s+1) E_{i,k}(-s-1) - \displaystyle\sum_{k=1}^{i}E_{k,i+1}(s) E_{i+1,k}(-s) \\
\qquad {} - \displaystyle\sum_{k=i+1}^{N} E_{k,i+1}(s+1) E_{i+1,k}(-s-1) \bigg)\quad \text{if $i \neq 0$}.
\end{cases}
\end{gather*}
\end{thm}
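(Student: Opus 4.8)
The plan is to take as input, from \cite{MR2323534}, both the existence of the homomorphism $\ev$ and Guay's explicit expression for $\ev(H_{i,1})$ with $i\neq 0$, and then to deduce every displayed formula by combining the rotation automorphism $\rho$ with the low-degree relations of Lemma~\ref{lem:relations}, the computational details being those of \cite{kodera_evaluation}. First I would record the generation fact underlying uniqueness: $\affY$ is generated by the degree-zero generators $x_{i,0}^{\pm}$, $h_{i,0}$ $(i\in\bbZ/N\bbZ)$ together with a single degree-one generator $x_{i_0,1}^{+}$. Indeed $h_{i,1}=\big[x_{i,1}^+,x_i^-\big]$, the relation of Lemma~\ref{lem:relations} for $i=j\pm1$ propagates $x_{i,1}^+$ to the adjacent $x_{j,1}^+$ around the connected affine Dynkin diagram, and the defining relation $\big[h_{i,r+1},x_{j,s}^\pm\big]-\big[h_{i,r},x_{j,s+1}^\pm\big]=\cdots$ generates all higher degrees; consequently any homomorphism out of $\affY$ is determined by its values on this finite set, and the hypothesis $\hbar c=N\ve_2$ is exactly the condition under which Guay's map is well defined.

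For $i\neq 0$ I would translate Guay's formula into the present conventions. Since $H_{i,1}=h_{i,1}+\tfrac{i}{2}(\ve_1-\ve_2)h_i$, the expression for $\ev(h_{i,1})$ is read off at once, and then $\ev\big(x_{i,1}^{\pm}\big)$ follows from the identities $x_{i,1}^{\pm}=\pm\tfrac12\big[\tilde h_{i,1},x_i^{\pm}\big]$ of Lemma~\ref{lem:relations}: since $\ev$ is a homomorphism, $\ev\big(x_{i,1}^{\pm}\big)=\pm\tfrac12\big[\ev(h_{i,1})-\tfrac{\hbar}{2}h_i^2,\,x_i^{\pm}\big]$, and one carries out these brackets against $x_i^+=E_{i,i+1}$ and $x_i^-=E_{i+1,i}$ inside $U\big(\hat\gl_N\big)_{\mathrm{comp},-}$.

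The crux is the index $i=0$, which I would reach by $\rho$-equivariance. Both $\ev\circ\rho$ and $\rho\circ\ev$ — the latter with $\rho$ acting on the target completion — are algebra homomorphisms $\affY\to U\big(\hat\gl_N\big)_{\mathrm{comp},-}$, and by Lemma~\ref{lem:rho} they agree on the degree-zero generators; here the central shift $\bfid(s)\mapsto\bfid(s)+\delta_{s,0}c$ in $\rho$ together with $\rho(E_{1,1})=E_{N,N}+c$ is what forces the match on $h_{1,0}$, whose rotation equals $h_0=E_{N,N}-E_{1,1}+c$. Taking the single degree-one generator to be $x_{N-1,1}^+$, its image $\rho\big(x_{N-1,1}^+\big)=x_{N-2,1}^++\ve_2 x_{N-2}^+$ under $\rho$ sits at the index $N-2\neq 0$ because $N\geq 3$, so the one remaining comparison involves only the $i\neq 0$ formulas already in hand and no circularity occurs; thus $\ev\circ\rho=\rho\circ\ev$. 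Evaluating this equality on $h_{1,1}$ gives $\ev(h_{0,1})=\rho\big(\ev(h_{1,1})\big)-\ve_2 h_0$, which I would expand with Lemma~\ref{lem:rho}, and then $\ev\big(x_{0,1}^{\pm}\big)=\pm\tfrac12\big[\ev(h_{0,1})-\tfrac{\hbar}{2}h_0^2,\,x_0^{\pm}\big]$ as before.

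The step I expect to be the main obstacle is the bookkeeping inside the completion $U\big(\hat\gl_N\big)_{\mathrm{comp},-}$: one must confirm that the infinite sums defining the images genuinely lie in the completion and that the bracket and $\rho$ computations reproduce the precise pattern of degree shifts $s\mapsto s+1$, the replacement of the scalar $1+i\ve_2$ by $1+N\ve_2$, and the central term $E_{N,N}(E_{1,1}-c)$ that single out the $i=0$ formulas. It is exactly the constraint $\hbar c=N\ve_2$, together with the central shift built into $\rho$, that reconciles these scalar and central contributions, and tracking them through the rotation is where the real work lies.
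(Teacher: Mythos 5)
Your proposal is correct and follows essentially the same route the paper indicates: the paper gives no self-contained proof but states that the theorem "can be deduced from a formula for $\ev(H_{i,1})$ $(i \neq 0)$" in Guay's work, with details in the reference \cite{kodera_evaluation}, which is precisely your plan of translating $H_{i,1}=h_{i,1}+\tfrac{i}{2}(\ve_1-\ve_2)h_i$, recovering $\ev\big(x_{i,1}^{\pm}\big)$ via Lemma~\ref{lem:relations}, and reaching $i=0$ through the $\rho$-equivariance of Proposition~\ref{prop:ev_and_auto}(ii). Your observation that the single degree-one generator $x_{N-1,1}^{+}$ rotates to index $N-2\neq 0$ (using $N\geq 3$) correctly rules out circularity in establishing $\rho\circ\ev=\ev\circ\rho$ before the $i=0$ formulas are known.
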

We will use the following property.
\begin{prop}\label{prop:ev_and_auto}We have
\begin{gather*}
\mathrm{(i)}\ \omega \circ \ev = \ev \circ \,\omega, \qquad \mathrm{(ii)}\ \rho \circ \ev = \ev \circ\, \rho,\qquad \mathrm{(iii)}\ T_i \circ \ev = \ev \circ\, T_i.
\end{gather*}
\end{prop}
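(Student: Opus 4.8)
The plan is to verify all three identities on a set of algebra generators of $\affY$. Both sides of each identity are algebra homomorphisms in cases (ii) and (iii), and algebra anti-homomorphisms in case (i) (since $\ev$ is a homomorphism while $\omega$ is an anti-automorphism on either algebra); in every case it therefore suffices to check the identity on the generators $x_j^{\pm}$, $h_j$ ($j\in\bbZ/N\bbZ$) and $x_{j,1}^+$, which generate $\affY$ as used in the proof of Proposition~\ref{prop:compatibility_coproduct}. On the degree-zero generators $x_j^{\pm}$, $h_j$ every identity is immediate: $\ev$ carries them to the standard generators of $\hat{\gl}_N$, and $\omega$, $\rho$, $T_i$ are defined by literally the same formulas on the Chevalley generators of $\affY$ and of $\hat{\gl}_N$ (invoking Lemma~\ref{lem:rho} for $\rho$). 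Thus the whole content lies in the single family $x_{j,1}^+$.

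For (iii) I would argue formally, without the explicit evaluation formula. Because $\ev$ is an algebra homomorphism with $\ev(x_i^{\pm})=x_i^{\pm}$, it intertwines the adjoint actions: $\ev([x_i^{\pm},X])=[x_i^{\pm},\ev(X)]$, that is $\ev\circ\ad x_i^{\pm}=\ad x_i^{\pm}\circ\ev$. Iterating and dividing by $n!$ gives $\ev\circ\exp\ad x_i^{\pm}=\exp\ad x_i^{\pm}\circ\ev$, where on the source each exponential is a finite sum by local nilpotency of $\ad x_i^{\pm}$ on $\affY$; composing the three factors of $T_i=\exp\ad x_i^{+}\exp\ad(-x_i^{-})\exp\ad x_i^{+}$ yields (iii). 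The point requiring care is that $T_i$ must be defined on the completion $U\big(\hat{\gl}_N\big)_{{\rm comp},-}$: one checks that $\ad x_i^{\pm}$ is locally nilpotent with degree bounded on each graded piece (already for $i=0$, where $x_0^{\pm}$ shifts the loop grading, the nilpotency degree on a current equals the $\fraksl_2$ bound), so that $\exp\ad x_i^{\pm}$ extends continuously and the termwise intertwining passes to the completion.

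For (i) I would compute on $x_{j,1}^+$, where the required identity reads $\omega(\ev(x_{j,1}^+))=\ev(\omega(x_{j,1}^+))=\ev(x_{j,1}^-)$. Applying the anti-automorphism $\omega\colon X(s)\mapsto{}^{t}X(-s)$ to the explicit formula for $\ev(x_{j,1}^+)$ and using ${}^{t}E_{k,l}=E_{l,k}$ together with order reversal, each quadratic summand $E_{k,i+1}(\ast)E_{i,k}(\ast)$ is sent to $E_{k,i}(\ast)E_{i+1,k}(\ast)$ and the linear term $(1+i\ve_2)x_i^{+}$ to $(1+i\ve_2)x_i^{-}$; term by term this reproduces the stated formula for $\ev(x_{j,1}^-)$, the ordering of modes being preserved so that $\omega$ respects the completion. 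The case $i=0$ is identical in shape.

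For (ii) I would likewise compute on $x_{j,1}^+$, now using $\rho(x_{j,1}^+)=x_{j-1,1}^+ +\ve_2 x_{j-1}^+$ on the source, so that the target identity is $\rho(\ev(x_{j,1}^+))=\ev(x_{j-1,1}^+)+\ve_2 x_{j-1}^+$. Applying Lemma~\ref{lem:rho}, under which $E_{a,b}(s)\mapsto E_{a-1,b-1}(s+\delta_{a,1}-\delta_{b,1})$ up to a central correction, converts the $j$-indexed formula into the $(j-1)$-indexed one: the mode shifts $\delta_{\cdot,1}$ are exactly what re-splits the two inner sums over $k\le i$ and $k>i$ after relabeling, the extra $\ve_2 x_{j-1}^+$ appears from matching the coefficient $(1+i\ve_2)$ against $(1+(i-1)\ve_2)+\ve_2$, and the central terms are absorbed using $\hbar c=N\ve_2$. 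The main obstacle I expect is precisely this bookkeeping in (ii): tracking the mode corrections and the central contributions through the wrap-around between the $i\neq0$ and $i=0$ shapes of $\ev$, where an off-by-one in the loop grading is easy to make; by contrast (i) is a clean termwise match and (iii) is formal once the completion issue is dispatched.
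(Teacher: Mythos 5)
Your proof is correct and, for parts (i) and (iii), follows essentially the same route as the paper: (i) is a termwise check against the explicit formulas for $\ev$, and (iii) is the formal intertwining $\ev\circ\ad x_i^{\pm}=\ad x_i^{\pm}\circ\ev$ applied to the three exponential factors of $T_i$ (your added care about extending $T_i$ to the completion $U\big(\hat{\gl}_N\big)_{{\rm comp},-}$ is a legitimate point the paper leaves implicit). The one place you diverge is (ii): the paper simply cites \cite[Proposition~3.6]{kodera_evaluation} for this, whereas you carry out the verification directly via Lemma~\ref{lem:rho}; your account of where the work lies is accurate --- the relabeling is routine for $2\le i\le N-1$ and for the $i=0\to i=N-1$ step, and the only central contribution arises at the $i=1\to i=0$ wrap-around from $\rho(E_{1,1}(0))=E_{N,N}+c$, where the mismatch $(1+\ve_2)+\hbar c$ versus $(1+N\ve_2)+\ve_2$ is resolved exactly by the hypothesis $\hbar c=N\ve_2$.
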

\begin{proof}The assertion (i) is immediate from the definition of $\ev$. The assertion (ii) is stated in \cite[Section~6, p.~463]{MR2323534} and a proof is given in \cite[Proposition~3.6]{kodera_evaluation}. Since~$\ev$ is the identity on the subalgebra $U\big(\affsl\big)$ and $T_i$ is defined via the generators of $U\big(\affsl\big)$, the assertion (iii) holds.
\end{proof}

\subsection{Construction of Heisenberg generators}

We construct elements $a_m$ ($m \in \bbZ$) of the affine Yangian $\affY$ such that $\ev(a_m) = \bfid(m)$ under the assumption $\ve_2 \neq 0$.

First consider the case $m=0$.
\begin{prop}\label{prop:zero-mode} We have
\begin{gather*} \ev \left( \sum_{i=0}^{N-1} \tilde{h}_{i,1} \right)= \ve_2 \bfid + c + \tfrac{\hbar}{2} c^2.
\end{gather*}
\end{prop}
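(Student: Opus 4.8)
The plan is to compute $\ev\bigl(\sum_{i}\tilde{h}_{i,1}\bigr)$ directly from the explicit formula for $\ev(h_{i,1})$ given in the theorem above, writing $\ev(\tilde{h}_{i,1}) = \ev(h_{i,1}) - \tfrac{\hbar}{2}h_i^2$. I would organize the resulting expression into four groups of terms: the linear part (degree one in the $h_i$), the finite ``diagonal correction'' terms $-\hbar E_{i,i}E_{i+1,i+1}$ together with $-\hbar E_{N,N}(E_{1,1}-c)$ for $i=0$, the infinite double sums, and the subtracted $-\tfrac{\hbar}{2}\sum_i h_i^2$. The goal is to show that, after summing over $i \in \bbZ/N\bbZ$, every genuinely operator-valued contribution cancels and only the central and scalar terms $\ve_2 \bfid + c + \tfrac{\hbar}{2}c^2$ survive.

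First I would dispose of the double sums, which is where the bookkeeping concentrates. For $1 \le c \le N$ and $0 \le p \le N$ set
\[ D_{c,p} = \hbar\sum_{s\ge 0}\Bigl(\sum_{k=1}^{p}E_{k,c}(s)E_{c,k}(-s) + \sum_{k=p+1}^{N}E_{k,c}(s+1)E_{c,k}(-s-1)\Bigr). \]
Then the double sum appearing in $\ev(h_{i,1})$ equals $D_{i,i}-D_{i+1,i}$ for $i \ne 0$ and $D_{N,N}-D_{1,0}$ for $i=0$. Summing over $i$ reassembles the positive parts into $\sum_{c=1}^{N} D_{c,c}$ and the negative parts into $\sum_{c=1}^{N} D_{c,c-1}$, so the total double-sum contribution is $\sum_{c=1}^{N}(D_{c,c}-D_{c,c-1})$. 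Since $D_{c,c}$ and $D_{c,c-1}$ differ only in the $k=c$ term, their difference telescopes and the infinite sum collapses to $\hbar E_{c,c}(0)^2$; hence the double sums contribute precisely $\hbar\sum_{c=1}^{N} E_{c,c}^2$.

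For the linear part I would use the identity $\bfid = \sum_{i=1}^{N-1} i\,h_i + N E_{N,N}$ together with $\sum_{i=0}^{N-1} h_i = c$, which reduces it to $c + \ve_2(\bfid - N E_{1,1} + Nc)$; the assumption $\hbar c = N\ve_2$ converts $N\ve_2$ into $\hbar c$, producing $\ve_2\bfid$, a correction $-\hbar c\,E_{1,1}$, and the scalar $\hbar c^2$. Expanding $-\tfrac{\hbar}{2}\sum_i h_i^2$ (all the relevant diagonal $E_{k,k}$ commute) gives $-\hbar\sum_c E_{c,c}^2 + \hbar\sum_c E_{c,c}E_{c+1,c+1} - \tfrac{\hbar}{2}c^2 - \hbar c\,E_{N,N} + \hbar c\,E_{1,1}$, with indices read mod $N$. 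Collecting the four groups, the $E_{c,c}^2$ terms cancel against the double-sum contribution, the $E_{c,c}E_{c+1,c+1}$ terms cancel against the diagonal corrections, and the $c\,E_{1,1}$ and $c\,E_{N,N}$ corrections cancel in pairs; what remains is $\ve_2\bfid + c + \hbar c^2 - \tfrac{\hbar}{2}c^2 = \ve_2\bfid + c + \tfrac{\hbar}{2}c^2$.

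The only real obstacle is organizational: matching the shifted column sums $D_{c,p}$ so that the telescoping is clean, and tracking the several diagonal and central corrections (those involving $E_{1,1}$, $E_{N,N}$, and $c$) so that they cancel exactly rather than leaving residual operator terms. A useful consistency check before carrying out the full computation is that both sides transform identically under $\rho$: from $\rho(\tilde{h}_{i,1}) = \tilde{h}_{i-1,1} + \ve_2 h_{i-1}$ one gets $\rho\bigl(\sum_i \tilde{h}_{i,1}\bigr) = \sum_i \tilde{h}_{i,1} + \ve_2 c$, which matches $\rho(\ve_2\bfid + c + \tfrac{\hbar}{2}c^2) = \ve_2\bfid + c + \tfrac{\hbar}{2}c^2 + \ve_2 c$ using $\rho(\bfid) = \bfid + c$.
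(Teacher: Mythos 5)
Your proposal is correct and follows essentially the same route as the paper: the telescoping identity $D_{c,c}-D_{c,c-1}=\hbar E_{c,c}^2$ is exactly the paper's $A_{i+1}-B_i=E_{i+1,i+1}^2$ after re-indexing, and the remaining cancellations (the paper packages the $-\hbar E_{i,i}E_{i+1,i+1}$ corrections and $-\tfrac{\hbar}{2}h_i^2$ into perfect squares $-\tfrac{\hbar}{2}\big(E_{i,i}^2+E_{i+1,i+1}^2\big)$ before summing, whereas you expand and cancel term by term) are the same algebra with diagonal elements, finished by $\hbar c=N\ve_2$ and $\bfid=\sum_{i=1}^{N-1}ih_i+NE_{N,N}$. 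The $\rho$-equivariance consistency check is a nice addition not present in the paper.
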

\begin{proof}
Put
\begin{gather*}
A_0 = \sum_{s \geq 0} \sum_{k=1}^N E_{k,N}(s)E_{N,k}(-s),\qquad
B_0 = \sum_{s \geq 0} \sum_{k=1}^N E_{k,1}(s+1)E_{1,k}(-s-1),\\
A_i = \sum_{s \geq 0} \left( \sum_{k=1}^i E_{k,i}(s)E_{i,k}(-s) + \sum_{k=i+1}^N E_{k,i}(s+1)E_{i,k}(-s-1) \right), \qquad i \neq 0,\\
B_i = \sum_{s \geq 0} \left( \sum_{k=1}^i E_{k,i+1}(s)E_{i+1,k}(-s) + \sum_{k=i+1}^N E_{k,i+1}(s+1)E_{i+1,k}(-s-1) \right), \qquad i \neq 0,
\end{gather*}
so that
\begin{gather*}
\ev\big(\tilde{h}_{i,1}\big) = \begin{cases}
(1+ N \varepsilon_2) h_{0} - \tfrac{\hbar}{2} \big(E_{N,N}^2 + (E_{1,1}-c)^2\big) + \hbar(A_0-B_0)& \text{if $i=0$},\\
(1+ i \varepsilon_2) h_{i} - \tfrac{\hbar}{2} \big(E_{i,i}^2 + E_{i+1,i+1}^2\big) + \hbar(A_i-B_i)& \text{otherwise}.
\end{cases}
\end{gather*}
Here we use
\begin{gather*}
-\hbar E_{N,N}(E_{1,1}-c) - \tfrac{\hbar}{2} h_0^2 = - \tfrac{\hbar}{2} \big(E_{N,N}^2 + (E_{1,1}-c)^2\big),\\
-\hbar E_{i,i} E_{i+1,i+1} - \tfrac{\hbar}{2} h_i^2 = - \tfrac{\hbar}{2} \big(E_{i,i}^2 + E_{i+1,i+1}^2\big), \qquad i \neq 0.
\end{gather*}
Then we have $A_{i+1}-B_{i}=E_{i+1,i+1}^2$ for $i=0,\ldots,N-1$, where we regard $A_N=A_0$.
Therefore
\begin{gather*}
\ev \left( \sum_{i=0}^{N-1} \tilde{h}_{i,1} \right)= c + \ve_2 \left(N h_0 + \sum_{i=1}^{N-1} i h_i\right) -\tfrac{\hbar}{2}\left( 2\sum_{i=1}^N E_{i,i}^2 - 2cE_{1,1} + c^2\right) + \hbar \sum_{i=1}^N E_{i,i}^2 \\
\hphantom{\ev \left( \sum_{i=0}^{N-1} \tilde{h}_{i,1} \right)}{} = c + \ve_2 \sum_{i=1}^{N-1} i h_i + N \ve_2 h_0 + \hbar c E_{1,1} - \tfrac{\hbar}{2}c^2.
\end{gather*}
By the assumption $\hbar c = N\ve_2$, we have
\begin{gather*}
N \ve_2 h_0 + \hbar c E_{1,1} - \tfrac{\hbar}{2}c^2 = N \ve_2 (E_{N,N} + c) - \tfrac{\hbar}{2}c^2 = N \ve_2 E_{N,N} + \tfrac{\hbar}{2}c^2.
\end{gather*}
The assertion holds since we have $\bfid = \sum\limits_{i=1}^{N-1} i h_i + NE_{N,N}$.
\end{proof}

Assume $\hbar c = N \ve_2$ and $\ve_2 \neq 0$. Put
\begin{gather*}
a_0 = \dfrac{1}{\ve_2} \left( \sum_{i=0}^{N-1} \tilde{h}_{i,1} - c - \tfrac{\hbar}{2} c^2\right).
\end{gather*}
Then we have $\ev(a_0) = \bfid$ by Proposition~\ref{prop:zero-mode}.

Next consider the case $m \geq 1$. For each $i \in \bbZ/N\bbZ$ and a fixed $m$, define an element $w(i,m)$ of the affine Weyl group $\hat{W}$ by
\begin{gather*}
w(i,m) = t_{-\alpha_{i+1}}^{m-1} s_{i+1} s_{i+2} \cdots s_{i-3} s_{i-2}.
\end{gather*}
This element has the property $w(i,m)(\alpha_{i-1}) = -\alpha_{i} + m \delta$. Hence the elements
\begin{gather*}
\big[x_{i}^{+}, T_{w(i,m)}\big(x_{i-1}^{+}\big)\big],\qquad \big[x_{i}^{+}, T_{w(i,m)}\big(x_{i-1,1}^{+}\big)\big]
\end{gather*} have weight $m\delta$.
We will see
\begin{gather*}
\big[x_{i}^+, T_{w(i,m)}\big(x_{i-1}^+\big)\big] = (-1)^{m-1} \times \begin{cases}
h_{-\theta}(m) & \text{if $i=0$},\\
h_i(m) & \text{otherwise}
\end{cases}
\end{gather*}
in Lemma~\ref{lem:braket_pre}, and will compute the value of
\begin{gather*}
\ev \big( \big[x_{i}^{+}, T_{w(i,m)}\big(x_{i-1,1}^{+}\big)\big]\big) = \big[x_{i}^{+}, T_{w(i,m)}\ev\big(x_{i-1,1}^{+}\big)\big]
\end{gather*}
in Proposition~\ref{prop:braket}. Then we will take the summation over $i$ in Proposition~\ref{prop:Heisenberg}. The result will yield a construction of the elements $a_m$ in Theorem~\ref{thm:image}.

By Lemma~\ref{lem:rho_omega_and_T}(ii), $\rho \circ T_{w(i,m)} = T_{w(i-1,m)} \circ \rho$ holds. The case $i=1$ will be important. Note that $w(1,1)=s_2 s_3 \cdots s_{N-1}$ and $t_{-\alpha_2} = s_2 s_3 \cdots s_0 s_1 s_0 \cdots s_3$.
\begin{lem}\label{lem:diagonal2}We have
\begin{gather*}
T_{t_{-\alpha_2}}(E_{i,i}(s)) = \begin{cases}
E_{1,1}(s) & \text{if $i=1$}, \\
E_{2,2}(s) + \delta_{s,0}c & \text{if $i=2$}, \\
E_{3,3}(s) - \delta_{s,0}c & \text{if $i=3$}, \\
E_{i,i}(s) & \text{if $4 \leq i \leq N$}
\end{cases}
\end{gather*}
for any $s \in \bbZ$.
\end{lem}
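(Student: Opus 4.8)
The plan is to exploit the structural fact, recorded in Lemma~\ref{lem:diagonal}, that every generator $T_i$ ($i\in\bbZ/N\bbZ$) maps a diagonal matrix unit $E_{j,j}(s)$ to another diagonal matrix unit of the \emph{same} degree $s$, with the index $j$ transformed by the transposition $s_i$ (where $s_0$ interchanges $1$ and $N$ and $s_i=(i,i+1)$ for $i\neq0$), plus a possible central correction. Three features make this tractable: the degree $s$ is never altered, no off-diagonal or higher terms ever appear, and a central term is produced \emph{only} by $T_0$, namely $+\delta_{s,0}c$ when it acts on current index $1$ and $-\delta_{s,0}c$ when it acts on current index $N$. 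Since moreover $T_i(c)=c$ for all $i$, the entire computation stays inside $\mathrm{span}\{E_{j,j}(s)\}\oplus\bbC c$, and $T_{t_{-\alpha_2}}(E_{i,i}(s))$ is determined by just two pieces of data: (a) a permutation of $\{1,\dots,N\}$ and (b) the accumulated central term, which is relevant only when $s=0$.

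Using the reduced expression $t_{-\alpha_2}=s_2 s_3\cdots s_{N-1}s_0\,s_1\,s_0 s_{N-1}\cdots s_3$, I would apply the operators innermost-first, so that they act on the running index in the order $T_3,T_4,\dots,T_{N-1},T_0,T_1,T_0,T_{N-1},\dots,T_3,T_2$. I would first isolate the two ``runs'' of transpositions, $\sigma_1=s_{N-1}\cdots s_4 s_3$ and $\sigma_2=s_2 s_3\cdots s_{N-1}$, and record their effect on indices ($\sigma_1$ sends $3\mapsto N$ and $k\mapsto k-1$ for $4\le k\le N$, fixing $1,2$; $\sigma_2$ is the cycle $(2,3,\dots,N)$, fixing $1$). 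Composing with the two $s_0$'s and the middle $s_1$ shows that the full product acts as the identity permutation on $\{1,\dots,N\}$, exactly as it must, since $t_{-\alpha_2}$ lies in the translation lattice and hence projects to the identity in the finite Weyl group $S_N$. This already yields $E_{i,i}(s)\mapsto E_{i,i}(s)$ up to central corrections.

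It then remains to pin down the central corrections, which enter only at the two $T_0$ steps and only when $s=0$. I would trace the running index through the word separately in the four cases $i=1$, $i=2$, $i=3$, and $4\le i\le N$, noting at each $T_0$ whether the index equals $1$ (contributing $+\delta_{s,0}c$) or $N$ (contributing $-\delta_{s,0}c$). For $i=1$ the index visits $1$ at the first $T_0$ and $N$ at the second, so the contributions $+\delta_{s,0}c$ and $-\delta_{s,0}c$ cancel; for $i=2$ a single $+\delta_{s,0}c$ survives; for $i=3$ a single $-\delta_{s,0}c$ survives; and for $4\le i\le N$ the index remains in $\{3,\dots,N-1\}$ at both $T_0$ steps, so no central term is produced. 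These four outcomes are precisely the four cases of the asserted formula.

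The main obstacle is purely one of bookkeeping: getting the order of application of the $T_i$ right (innermost-first) and determining exactly when the running index equals $1$ or $N$ at the two $T_0$ steps, since this is the sole source of the central terms. The permutation computations for $\sigma_1$ and $\sigma_2$ must be carried out carefully, but once the structural observation of the first paragraph is in place, each of the four cases reduces to a short finite trace.
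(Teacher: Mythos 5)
Your proposal is correct and follows exactly the route the paper intends: the paper's proof consists of the single sentence that the assertion follows easily from Lemma~\ref{lem:diagonal}, i.e., from tracing the diagonal index through the reduced word $t_{-\alpha_2}=s_2\cdots s_{N-1}s_0s_1s_0s_{N-1}\cdots s_3$ while tracking the $\pm\delta_{s,0}c$ corrections produced by the two occurrences of $T_0$. Your case-by-case bookkeeping (cancellation for $i=1$, a surviving $+\delta_{s,0}c$ for $i=2$, a surviving $-\delta_{s,0}c$ for $i=3$, and no contribution for $4\le i\le N$ since the running index stays in $\{3,\dots,N-1\}$ at both $T_0$ steps) is accurate.
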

\begin{proof}The assertion is easily proved by Lemma~\ref{lem:diagonal}.
\end{proof}
\begin{lem}\label{lem:i=1_partial}We have
\begin{gather*}
T_{w(1,m)}(E_{1,1}(s)) = E_{1,1}(s),\qquad T_{w(1,m)}(E_{N,N}(-s)) = E_{2,2}(-s) + \delta_{s,0}(m-1)c
\end{gather*}
for any $s \in \bbZ$.
\end{lem}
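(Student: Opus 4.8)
The plan is to factor the operator $T_{w(1,m)}$ along the definition $w(1,m) = t_{-\alpha_2}^{m-1} s_2 s_3 \cdots s_{N-1}$, writing
\[
T_{w(1,m)} = T_{t_{-\alpha_2}}^{\,m-1}\, T_2 T_3 \cdots T_{N-1},
\]
and then to apply the two blocks in turn to $E_{1,1}(s)$ and $E_{N,N}(-s)$. First I would treat the inner block $T_2 \cdots T_{N-1}$, for which Lemma~\ref{lem:diagonal} (the case $i \neq 0$) applies: each $T_i$ merely interchanges $E_{i,i}$ and $E_{i+1,i+1}$ and fixes the remaining diagonal matrix units, with no central correction. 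Since none of the indices $2, \ldots, N-1$ involves $E_{1,1}$, this block fixes $E_{1,1}(s)$; reading the composition from the right, $E_{N,N}(-s)$ is carried successively to $E_{N-1,N-1}(-s), E_{N-2,N-2}(-s), \ldots$, and finally to $E_{2,2}(-s)$.

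It then remains to apply $T_{t_{-\alpha_2}}^{m-1}$ to $E_{1,1}(s)$ and to $E_{2,2}(-s)$ using Lemma~\ref{lem:diagonal2}. The entry $E_{1,1}(s)$ is fixed by $T_{t_{-\alpha_2}}$, hence by every power, which gives the first identity at once. For the second I would induct on the exponent: Lemma~\ref{lem:diagonal2} gives $T_{t_{-\alpha_2}}(E_{2,2}(-s)) = E_{2,2}(-s) + \delta_{s,0}c$, and since $c$ is central it is fixed by every $T_i$ and so by $T_{t_{-\alpha_2}}$; therefore $T_{t_{-\alpha_2}}^{k}(E_{2,2}(-s)) = E_{2,2}(-s) + k\,\delta_{s,0}c$, and taking $k = m-1$ yields $E_{2,2}(-s) + \delta_{s,0}(m-1)c$, as claimed. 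Combining the two blocks gives both stated formulas.

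The computation is otherwise routine, so the main point requiring care is the validity of the operator factorization in the first display: one must know that $T_{w(1,m)}$ genuinely equals the iterated product $T_{t_{-\alpha_2}}^{m-1} T_2 \cdots T_{N-1}$, i.e.\ that a reduced word for $t_{-\alpha_2}^{m-1} s_2 \cdots s_{N-1}$ is obtained by concatenating reduced words of the factors. This is where length additivity enters: since $t_{-\alpha_2}^{m-1} = t_{-(m-1)\alpha_2}$ has length $(m-1)$ times that of $t_{-\alpha_2}$, and this translation part concatenates reducedly with $s_2 \cdots s_{N-1}$, the braid operators multiply as written, while the independence of $T_w = T_{i_1}\cdots T_{i_l}$ of the chosen reduced expression makes the factorization unambiguous. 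The only other thing to keep straight is the accumulation of the central term through the induction, which is immediate once one observes that $c$ is fixed by all $T_i$.
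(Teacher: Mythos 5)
Your proof is correct and takes essentially the same route as the paper: the paper's own one-line proof likewise factors $T_{w(1,m)}$ as $T_{t_{-\alpha_2}}^{m-1}T_2\cdots T_{N-1}$, computes the inner block on the diagonal entries via Lemma~\ref{lem:diagonal}, and then iterates Lemma~\ref{lem:diagonal2}, with the central term accumulating because $c$ is fixed by every $T_i$. Your explicit remark about length additivity justifying the factorization is a point the paper leaves implicit, but it does not change the argument.
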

\begin{proof}The assertion follows from
\begin{gather*}
T_2 T_3 \cdots T_{N-1} (E_{1,1}(s)) = E_{1,1}(s), \qquad T_2 T_3 \cdots T_{N-1} (E_{N,N}(-s)) = E_{2,2}(-s)
\end{gather*}
and Lemma~\ref{lem:diagonal2}.
\end{proof}

\begin{lem}\label{lem:classical_part} We have
\begin{gather*} T_{w(i,m)}(x_{i-1}^+) = (-1)^{m-1} \times \begin{cases}
E_{1,N}(m-1) & \text{if $i=0$},\\
E_{i+1,i}(m) & \text{otherwise}.
\end{cases}
\end{gather*}
\end{lem}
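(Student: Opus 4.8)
The plan is to reduce everything to the case $i=1$ and then compute along a reduced word for the translation $t_{-\alpha_2}$. First I would use the relation $\rho\circ T_{w(i,m)} = T_{w(i-1,m)}\circ\rho$ (Lemma~\ref{lem:rho_omega_and_T}(ii)) together with $\rho(x_{i-1}^+)=x_{i-2}^+$: applying $\rho$ to the identity for index $i$ and transporting the right-hand side by Lemma~\ref{lem:rho} yields the identity for index $i-1$. One checks that the claimed matrix units are permuted consistently, namely $\rho(E_{i+1,i}(m))=E_{i,i-1}(m)$ for $2\le i\le N-1$, $\rho(E_{2,1}(m))=E_{1,N}(m-1)$ and $\rho(E_{1,N}(m-1))=E_{N,N-1}(m)$, so that the $i=0$ case with its shifted exponent drops out of the $i=1$ case, and the cycle $1\to 0\to N-1\to\cdots\to 2\to 1$ reaches every index. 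Hence it suffices to prove $T_{w(1,m)}(x_0^+)=(-1)^{m-1}E_{2,1}(m)$.

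For $i=1$ I would factor $T_{w(1,m)}=T_{t_{-\alpha_2}}^{m-1}T_2T_3\cdots T_{N-1}$. The finite part is routine: each $T_j$ with $j\ne 0$ preserves the loop degree (Lemma~\ref{lem:higher}), and starting from $x_0^+=E_{N,1}(1)$ repeated use of (\ref{eq:TE-1}) decrements the first index one step at a time, giving $T_2T_3\cdots T_{N-1}(E_{N,1}(1))=E_{2,1}(1)$. It then remains to understand the single translation $T_{t_{-\alpha_2}}$ on the vectors $E_{2,1}(s)$.

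The key claim is $T_{t_{-\alpha_2}}(E_{2,1}(s))=-E_{2,1}(s+1)$ for all $s$; iterating $m-1$ times on $E_{2,1}(1)$ then gives $(-1)^{m-1}E_{2,1}(m)$ and finishes the proof. I would first settle the degree-zero case $T_{t_{-\alpha_2}}(E_{2,1})=-E_{2,1}(1)$ by running the reduced word for $t_{-\alpha_2}$ recorded before Lemma~\ref{lem:diagonal2} through $E_{2,1}$: the first few reflections transport $E_{2,1}$ to $x_0^-=E_{1,N}(-1)$ (using a short $\exp\ad$ computation for the off-diagonal step $T_0(E_{2,1})=E_{2,N}(-1)$ and then (\ref{eq:TE-1}) with Lemma~\ref{lem:higher}), the middle factor contributes $T_0(x_0^-)=-x_0^+$ by Proposition~\ref{prop:formula1}, and the remaining reflections transport $-x_0^+=-E_{N,1}(1)$ back down to $-E_{2,1}(1)$ by (\ref{eq:TE-1}). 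The general $s$ then comes cheaply: writing $E_{2,1}(s)=[E_{2,2}(s),E_{2,1}]$ and using $T_{t_{-\alpha_2}}(E_{2,2}(s))=E_{2,2}(s)+\delta_{s,0}c$ from Lemma~\ref{lem:diagonal2}, the central term commutes out of the bracket and $T_{t_{-\alpha_2}}(E_{2,1}(s))=[E_{2,2}(s),-E_{2,1}(1)]=-E_{2,1}(s+1)$.

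The main obstacle is pinning down the sign in $T_{t_{-\alpha_2}}(E_{2,1})=-E_{2,1}(1)$. The shape $E_{2,1}(1)$ is forced by the root-vector transport property of the braid operators (the translation sends the root $-\alpha_1$ to $-\alpha_1+\delta$), and cheap symmetry arguments are not enough to fix the sign: since $\omega$ commutes with each $T_i$ (Lemma~\ref{lem:rho_omega_and_T}(i)), writing $T_{t_{-\alpha_2}}(E_{2,1})=\kappa E_{2,1}(1)$ forces $T_{t_{-\alpha_2}}(E_{1,2})=\kappa E_{1,2}(-1)$, and applying $T_{t_{-\alpha_2}}$ to $h_1=[E_{1,2},E_{2,1}]$ together with $[E_{1,2}(-1),E_{2,1}(1)]=h_1-c$ only yields $\kappa^2=1$. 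The honest determination $\kappa=-1$ must therefore come from the explicit pass through the $s_0$-factors, where exactly one application of $T_0(x_0^-)=-x_0^+$ occurs; everything else is bookkeeping of degrees and index shifts that is routine given Lemmas~\ref{lem:higher} and~\ref{lem:diagonal2} and the formulas (\ref{eq:TE-1})--(\ref{eq:TE-4}).
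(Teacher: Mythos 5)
Your proposal is correct and follows essentially the same route as the paper: reduce to $i=1$ by $\rho$ and Lemma~\ref{lem:rho}, settle $m=1$ with the finite-type braid operators, and induct on $m$ through a single application of $T_{t_{-\alpha_2}}$. The only deviation is cosmetic --- you evaluate $T_{t_{-\alpha_2}}$ on $E_{2,1}$ at loop degree zero by pushing it through the reduced word and then propagate to all degrees by bracketing with $E_{2,2}(s)$ (which in fact makes explicit the paper's terse ``the case $m=2$ yields the equality for general $m$ inductively''), whereas the paper works directly at degree one via Proposition~\ref{prop:iji_pre} and Lemma~\ref{lem:0_to_k}.
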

\begin{proof}We prove the assertion for $i=1$. The other cases are deduced from this case by applying~$\rho$ and Lemma~\ref{lem:rho}. First assume $m=1$. Recall $T_{w(1,1)}=T_2 T_3 \cdots T_{N-1}$. We have
\begin{gather*}
T_2 T_3 \cdots T_{N-1}\big(x_0^+\big) = T_2 T_3 \cdots T_{N-2} \big(\big[x_{N-1}^+,x_0^+\big]\big) = \big[T_2 T_3 \cdots T_{N-2} \big(x_{N-1}^+\big),x_0^+\big] \\
\hphantom{T_2 T_3 \cdots T_{N-1}\big(x_0^+\big)}{} \overset{\text{by (\ref{eq:increase})}}{=} [E_{2,N}, E_{N,1}(1)] = E_{2,1}(1),
\end{gather*}
hence the case $m=1$ is proved. Next consider the case $m \geq 2$. Since the case $m=2$ yields the equality for general $m \geq 2$ inductively, it is enough to prove
\begin{gather*}
T_{t_{-\alpha_2}}(E_{2,1}(1))=-E_{2,1}(2).
\end{gather*}
Since $E_{2,1}(1)$ is invariant under $T_i$ for $3 \leq i \leq N-1$, we have
\begin{gather}
T_{t_{-\alpha_2}}(E_{2,1}(1)) = T_2 T_3 \cdots T_0 T_1 T_0 (E_{2,1}(1)).\label{eq:E21}
\end{gather}
We have
\begin{gather*}
T_0 T_1 T_0 (E_{2,1}(1)) = T_0 T_1 T_0 \big(T_2 T_3 \cdots T_{N-1}\big(x_0^+\big)\big) = T_0 T_1 T_2 T_3 \cdots T_{N-2}\big(x_{N-1}^+\big) = -E_{N,1}(2).
\end{gather*}
Here the second equality follows from the braid relations and Proposition~\ref{prop:iji_pre}, and the last from Lemma~\ref{lem:0_to_k}. Then the right-hand side of (\ref{eq:E21}) is
\begin{gather*}
T_2 T_3 \cdots T_{N-1} (-E_{N,1}(2)) \overset{\text{by Lemma~\ref{lem:higher}}}{=} - (T_2 T_3 \cdots T_{N-1} (E_{N,1}) )(2) \overset{\text{by (\ref{eq:TE-1})}}{=} -E_{2,1}(2),
\end{gather*}
hence the assertion is proved.
\end{proof}

\begin{lem}\label{lem:braket_pre}
We have
\begin{gather*} \big[x_{i}^+, T_{w(i,m)}\big(x_{i-1}^+\big)\big] = (-1)^{m-1} \times \begin{cases}
h_{-\theta}(m) & \text{if $i=0$},\\
h_i(m) & \text{otherwise}.
\end{cases}
\end{gather*}
\end{lem}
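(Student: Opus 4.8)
The plan is to feed the explicit matrix formula of Lemma~\ref{lem:classical_part} into the commutator $\big[x_i^+, T_{w(i,m)}\big(x_{i-1}^+\big)\big]$ and evaluate it directly with the Lie bracket of $\hat{\gl}_N$. Since Lemma~\ref{lem:classical_part} already identifies $T_{w(i,m)}\big(x_{i-1}^+\big)$ with an explicit element $X(r)$ of $\gl_N \otimes t^r$, the whole statement collapses to a single bracket computation in the affine Lie algebra; accordingly I expect no serious obstacle and regard the lemma as essentially a corollary of Lemma~\ref{lem:classical_part}.

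First I would treat the generic case $i \neq 0$. Here $x_i^+ = E_{i,i+1}$ and, by Lemma~\ref{lem:classical_part}, $T_{w(i,m)}\big(x_{i-1}^+\big) = (-1)^{m-1} E_{i+1,i}(m)$. The first factor sits in degree $0$ and the second in degree $m$, so the degrees sum to $m \neq 0$ and the cocycle term $r\delta_{r+s,0}\tr(XY)c$ in the affine bracket vanishes. Hence $\big[x_i^+, T_{w(i,m)}\big(x_{i-1}^+\big)\big] = (-1)^{m-1}[E_{i,i+1},E_{i+1,i}](m) = (-1)^{m-1}(E_{i,i}-E_{i+1,i+1})(m) = (-1)^{m-1} h_i(m)$, as claimed.

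Next I would handle $i = 0$, where $x_0^+ = E_{N,1}(1)$ and $T_{w(0,m)}\big(x_{N-1}^+\big) = (-1)^{m-1} E_{1,N}(m-1)$ by Lemma~\ref{lem:classical_part}. Now the two degrees are $1$ and $m-1$, summing to $m$, which is again nonzero for $m \geq 1$, so the central contribution $1\cdot\delta_{m,0}\tr(E_{N,1}E_{1,N})c$ drops out. Therefore $\big[x_0^+, T_{w(0,m)}\big(x_{N-1}^+\big)\big] = (-1)^{m-1}[E_{N,1},E_{1,N}](m) = (-1)^{m-1}(E_{N,N}-E_{1,1})(m)$. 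Recalling that the coroot $h_{-\theta}$ attached to $-\theta$ equals $E_{N,N}-E_{1,1}$ (exactly as already used in the proof of Lemma~\ref{lem:rho}) identifies this with $(-1)^{m-1}h_{-\theta}(m)$ and completes the case.

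The only point requiring care is the bookkeeping of the cocycle term in the affine bracket: one must confirm that it vanishes in both cases, which it does precisely because $m \geq 1$ keeps the total degree away from $0$. Together with the identification $h_{-\theta} = E_{N,N}-E_{1,1}$, this is all that is needed, so the computation is routine and finishes the proof.
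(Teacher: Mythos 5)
Your proposal is correct and is exactly the computation the paper intends: its proof of this lemma is the single line ``Immediate from Lemma~\ref{lem:classical_part},'' and you have simply written out that immediate bracket calculation, including the (correct) observations that the cocycle term vanishes since $m\geq 1$ and that $h_{-\theta}=[E_{N,1},E_{1,N}]=E_{N,N}-E_{1,1}$.
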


\begin{proof}
Immediate from Lemma~\ref{lem:classical_part}.
\end{proof}

\begin{lem}\label{lem:i=1}We have
\begin{gather*}
T_{w(1,m)}(E_{k,1}(s+1)E_{N,k}(-s)) \\
\qquad{} = (-1)^{m-1} \times\begin{cases}
E_{1,1}(s+1)E_{2,1}(-s+m-1) & \text{if $k=1$},\\
E_{3,1}(s-m+2)E_{2,3}(-s+2m-2) & \text{if $k=2$},\\
E_{k+1,1}(s+1)E_{2,k+1}(-s+m-1) & \text{if $3 \leq k \leq N-1$},\\
E_{2,1}(s+m)\Big( E_{2,2}(-s) + \delta_{s,0}(m-1)c \Big) & \text{if $k=N$}.
\end{cases}
\end{gather*}
\end{lem}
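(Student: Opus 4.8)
The plan is to use that $T_{w(1,m)}$ is an algebra automorphism of $U\big(\hat{\gl}_N\big)$, so that it distributes over the product,
\begin{gather*}
T_{w(1,m)}\big(E_{k,1}(s+1)E_{N,k}(-s)\big) = T_{w(1,m)}\big(E_{k,1}(s+1)\big)\, T_{w(1,m)}\big(E_{N,k}(-s)\big),
\end{gather*}
which reduces the statement to computing the image of each matrix-unit factor. I would then factor $T_{w(1,m)} = T_{t_{-\alpha_2}}^{m-1}\circ T_2 T_3\cdots T_{N-1}$ according to $w(1,m) = t_{-\alpha_2}^{m-1}s_2 s_3\cdots s_{N-1}$ and treat the two pieces in turn, exactly as in the proof of Lemma~\ref{lem:classical_part}.

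First I would apply the finite part $P := T_2 T_3\cdots T_{N-1}$. By Lemma~\ref{lem:higher} it preserves the $t$-degree, and on an off-diagonal matrix unit it permutes indices by the cycle $\sigma=(2,3,\dots,N)$, so $P(E_{a,b}) = \pm E_{\sigma(a),\sigma(b)}$, the sign being read off from \eqref{eq:TE-1}--\eqref{eq:TE-4} together with \eqref{eq:increase} and \eqref{eq:decrease}; on the diagonal units $E_{1,1},E_{N,N}$ I would quote Lemma~\ref{lem:diagonal}. Concretely this yields $P(E_{N,1})=E_{2,1}$, $P(E_{k,1})=-E_{k+1,1}$ and $P(E_{N,k})=-E_{2,k+1}$ for $3\le k\le N-1$, $P(E_{2,1})=-E_{3,1}$, $P(E_{N,2})=-E_{2,3}$, and $P(E_{1,1})=E_{1,1}$, $P(E_{N,N})=E_{2,2}$. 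Note that the cycle viewpoint handles the boundary $k=N-1$ uniformly, so no separate index case is needed; the one sign flip occurring in each $P(E_{N,k})$ comes from the single reflection that crosses the diagonal.

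Next I would apply the translation $T_{t_{-\alpha_2}}^{m-1}$. On an off-diagonal real root vector $E_{a,b}(r)$ one application shifts the degree by $p:=(\alpha_2,\epsilon_a-\epsilon_b)=\delta_{a,2}-\delta_{a,3}-\delta_{b,2}+\delta_{b,3}$ and multiplies by $(-1)^{p}$, exactly the behaviour recorded by $T_{t_{-\alpha_2}}(E_{2,1}(1))=-E_{2,1}(2)$ in the proof of Lemma~\ref{lem:classical_part}; iterating $m-1$ times gives a degree shift $(m-1)p$ and sign $(-1)^{(m-1)p}$. For the diagonal factor occurring when $k=1$ or $k=N$ I would instead quote Lemma~\ref{lem:i=1_partial} directly, which already records $T_{w(1,m)}(E_{1,1}(s))=E_{1,1}(s)$ and $T_{w(1,m)}(E_{N,N}(-s))=E_{2,2}(-s)+\delta_{s,0}(m-1)c$ with its central correction. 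Multiplying the two transformed factors and simplifying the degree and sign in each of the four ranges $k=1$, $k=2$, $3\le k\le N-1$, $k=N$ then produces the stated formulas. The sign and degree bookkeeping can be made fully rigorous by induction on $m$ in the style of Lemma~\ref{lem:classical_part}: the base case $m=1$ is the degree-preserving computation of $P$, and the inductive step applies $T_{t_{-\alpha_2}}$ once to the finitely many matrix units $E_{1,1}$, $E_{2,1}$, $E_{3,1}$, $E_{k+1,1}$, $E_{2,3}$, $E_{2,k+1}$, $E_{2,2}$ appearing in the formula, using Lemma~\ref{lem:diagonal2} for the diagonal ones.

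The main obstacle is the combinatorial control of the $t$-degree shifts and signs through the $m-1$ translations, together with two special features. The case $k=2$ is delicate because after $P$ the column index lands on $3$, so the pairing of the root $\epsilon_2-\epsilon_3$ with $\alpha_2$ equals $2$, giving a degree shift of $2$ per step and \emph{no} sign; this is precisely what produces the exponents $s-m+2$ and $-s+2m-2$. The case $k=N$ is delicate because the diagonal factor $E_{N,N}$ forces the central term $\delta_{s,0}(m-1)c$, which must be extracted from Lemma~\ref{lem:i=1_partial} rather than from the generic real-root-vector rule. Everything else is a routine application of the automorphism property and the matrix-unit formulas established above.
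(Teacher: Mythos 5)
Your overall strategy coincides with the paper's: split the product using that $T_{w(1,m)}$ is an automorphism, compute each matrix-unit factor, apply the finite part $T_2\cdots T_{N-1}$ first (your values here all agree with the paper's), and then iterate $T_{t_{-\alpha_2}}$, quoting Lemma~\ref{lem:i=1_partial} for the diagonal factors. The genuine problem is the ``general rule'' you invoke for the translation step, namely that one application of $T_{t_{-\alpha_2}}$ sends $E_{a,b}(r)$ to $(-1)^{p}E_{a,b}(r+p)$ with $p=(\alpha_2,\epsilon_a-\epsilon_b)$. The degree shift is correct but the sign is not. Already for $N=3$ a direct computation of $T_2T_0T_1T_0$ gives $T_{t_{-\alpha_2}}(E_{3,1})=+E_{3,1}(-1)$ and $T_{t_{-\alpha_2}}(E_{2,3})=-E_{2,3}(2)$, whereas your rule predicts $-E_{3,1}(-1)$ and $+E_{2,3}(2)$. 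The correct per-step signs for these two root vectors are $(-1)^{N+1}$ and $(-1)^{N}$, which is exactly what the paper's equations \eqref{eq:m=2_k=2} and \eqref{eq:m=2_k=2_second} record; so your per-factor formulas in the case $k=2$ (the one you flag as delicate, asserting ``no sign'' for $E_{2,3}$) are false for all odd $N$. Your final answer survives only because the two wrong signs multiply to the correct product sign $(-1)^{N+1}(-1)^{N}=-1$, but since your argument is organized as ``compute each factor, then multiply,'' this is an error in the proof, not a harmless shortcut: the sign of $T_{t_{-\alpha_2}}(E_{a,b})$ is not a function of the pairing $(\alpha_2,\epsilon_a-\epsilon_b)$ alone and has to be extracted from the reduced word, which is precisely what the paper's inductions using \eqref{eq:TE-1}--\eqref{eq:TE-4} and the braid relations accomplish.

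A second, smaller gap: for the translation part you treat all loop degrees $r$ uniformly, but $T_{t_{-\alpha_2}}$ involves $T_0$, so Lemma~\ref{lem:higher} does not apply and degree-by-degree uniformity is not free. The paper handles this by proving each identity at $s=0$ and then bracketing with $E_{1,1}(\pm s)$ (resp.\ $E_{2,2}(-s)$), which are $T_{t_{-\alpha_2}}$-invariant by Lemma~\ref{lem:diagonal2}; your proposed induction on $m$ needs this step before ``applying $T_{t_{-\alpha_2}}$ once to finitely many matrix units'' is literally meaningful. With the signs recomputed honestly in the inductive step and the reduction to $s=0$ supplied, your outline does reproduce the paper's proof.
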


\begin{proof}We prove
\begin{gather}
T_{w(1,m)}(E_{k,1}(s+1)) = \begin{cases}
E_{1,1}(s+1) & \text{if $k=1$},\\
(-1)^{(m-1)(N+1)} (-E_{3,1}(s-m+2)) & \text{if $k=2$},\\
-E_{k+1,1}(s+1) & \text{if $3 \leq k \leq N-1$},\\
(-1)^{m-1} E_{2,1}(s+m) & \text{if $k=N$}
\end{cases}\label{eq:i=1former}
\end{gather}
and
\begin{gather}
T_{w(1,m)}(E_{N,k}(-s)) = \begin{cases}
(-1)^{m-1} E_{2,1}(-s+m-1) & \text{if $k=1$},\\
(-1)^{(m-1)N} (- E_{2,3}(-s+2m-2)) & \text{if $k=2$},\\
(-1)^{m-1}(- E_{2,k+1}(-s+m-1)) & \text{if $3 \leq k \leq N-1$},\\
E_{2,2}(-s) + \delta_{s,0}(m-1)c & \text{if $k=N$}.
\end{cases}\label{eq:i=1latter}
\end{gather}
The equalities (\ref{eq:i=1former}) for $k=1$ and (\ref{eq:i=1latter}) for $k=N$ follow from Lemma~\ref{lem:i=1_partial}.

Consider (\ref{eq:i=1former}) for $k=N$ and (\ref{eq:i=1latter}) for $k=1$. Note that Lemma~\ref{lem:classical_part} for $i=1$ is nothing but (\ref{eq:i=1former}) for $k=N$ and $s=0$. We can prove the other cases by applying $[-, E_{1,1}(\pm s)]$ to this case as
\begin{gather*}
\big[T_{w(1,m)}(E_{N,1}(1)), E_{1,1}(\pm s)\big] = T_{w(1,m)}\big(\big[E_{N,1}(1), E_{1,1}(\pm s)\big]\big) = T_{w(1,m)}\big(E_{N,1}(1 \pm s)\big).
\end{gather*}
Here we use the fact that $E_{1,1}(\pm s)$ is invariant under $T_{w(1,m)}$ proved in Lemma~\ref{lem:i=1_partial}. In the sequel, we prove (\ref{eq:i=1former}) and (\ref{eq:i=1latter}) for $2 \leq k \leq N-1$.

We prove (\ref{eq:i=1former}). First we consider the case $m=1$. We prove
\begin{gather*}
T_{w(1,1)} \big(E_{k,1}(s+1)\big)=-E_{k+1,1}(s+1)
\end{gather*}
for $2 \leq k \leq N-1$ by induction on $k$. Since $T_{w(1,1)}=T_2 T_3 \cdots T_{N-1}$ does not involve $T_0$, it is enough to prove $T_{w(1,1)}(E_{k,1})=-E_{k+1,1}$ by Lemma~\ref{lem:higher}. When $k=2$, we have
\begin{gather*}
T_2 T_3 \cdots T_{N-1}\big(x_1^-\big) = T_2\big(x_1^-\big) = -E_{3,1}.
\end{gather*}
Assume that the assertion holds for $k$. Since we have $E_{k+1,1}=-T_k(E_{k,1})$ by (\ref{eq:TE-3}),
\begin{gather*}
T_2 T_3 \cdots T_{N-1}(E_{k+1,1}) = -T_2 T_3 \cdots T_{N-1} T_k(E_{k,1}) = -T_{k+1} T_2 T_3 \cdots T_{N-1} (E_{k,1})\\
\hphantom{T_2 T_3 \cdots T_{N-1}(E_{k+1,1})}{} = T_{k+1}(E_{k+1,1}) \overset{\text{by (\ref{eq:TE-3})}}{=} -E_{k+2,1}.
\end{gather*}
Thus we have proved the case $m=1$. Next we consider the case $m \geq 2$. Since the case $m=2$ yields the equality for general $m \geq 2$ inductively, it is enough to prove:
\begin{gather}
T_{t_{-\alpha_2}}\big(E_{3,1}(s+1)\big) = (-1)^{N+1}E_{3,1}(s), \label{eq:m=2_k=2}\\
T_{t_{-\alpha_2}}\big(E_{k+1,1}(s+1)\big) = E_{k+1,1}(s+1) \qquad \text{if $3 \leq k \leq N-1$}. \label{eq:m=2_k+1}
\end{gather}
If we prove the assertion for $s=0$, we can prove the other cases by applying $[-,E_{1,1}(s)]$ to this case by using the fact that $E_{1,1}(s)$ is invariant under $T_{t_{-\alpha_2}}$ proved in Lemma~\ref{lem:diagonal2}.
We prove~(\ref{eq:m=2_k=2}) for $s=0$. Since we have $E_{3,1}=-T_2\big(x_1^-\big)$, the left-hand side of~(\ref{eq:m=2_k=2}) for $s=0$ is
\begin{gather*}
-T_2 T_3 \cdots T_0 T_1 T_0 \cdots T_3 \big(T_2 \big(x_1^-\big)(1)\big) \overset{\text{by (\ref{eq:decrease})}}{=} -T_2 T_3 \cdots T_0 T_1 T_0\big( (-1)^{N-2} E_{N,1}(1) \big) \\
\qquad {} = (-1)^{N-1} T_2 T_3 \cdots T_0 T_1 T_0 \big(x_0^+\big) = (-1)^N T_2 T_3 \cdots T_{N-1} \big(x_1^-\big) \\
\qquad{} = (-1)^N T_2\big(x_1^-\big) = (-1)^{N+1} E_{3,1}.
\end{gather*}
We prove (\ref{eq:m=2_k+1}) for $s=0$ by backward induction on $k$. The case $k=N-1$ is proved as
\begin{gather*}
T_2 T_3 \cdots T_0 T_1 T_0 \cdots T_3\big(x_0^+\big) = T_2 T_3 \cdots T_0 T_1 \big(x_{N-1}^+\big) = T_2 T_3 \cdots T_{N-2} \big(x_0^+\big) = x_0^+.
\end{gather*}
Assume that the assertion holds for $k$. Since we have $E_{k,1} = T_k(E_{k+1,1})$ by~(\ref{eq:TE-1}),
\begin{gather*}
T_{t_{-\alpha_2}}\big(E_{k,1}(1)\big) = T_{t_{-\alpha_2}} T_{k}\big(E_{k+1,1}(1)\big) = T_{k} T_{t_{-\alpha_2}}\big(E_{k+1,1}(1)\big) = T_k \big(E_{k+1,1}(1)\big) = E_{k,1}(1).
\end{gather*}
Here the second equality holds by $k \geq 3$. Thus we have proved (\ref{eq:i=1former}).

We prove (\ref{eq:i=1latter}). First we consider the case $m=1$. We prove
\begin{gather*}
T_{w(1,1)} \big(E_{N,k}(-s)\big)=-E_{N,k+1}(-s)
\end{gather*}
for $2 \leq k \leq N-1$ by backward induction on $k$. By Lemma~\ref{lem:higher}, it is enough to prove $T_{w(1,1)}(E_{N,k})=-E_{2,k+1}$. When $k=N-1$, we have
\begin{gather*}
T_2 T_3 \cdots T_{N-1}\big(x_{N-1}^-\big) = -T_2 T_3 \cdots T_{N-2}\big(x_{N-1}^+\big) = -E_{2,N}
\end{gather*}
by (\ref{eq:increase}). Assume that the assertion holds for $k$. Since we have $E_{N,k-1}=T_{k-1}(E_{N,k})$ by (\ref{eq:TE-2}),
\begin{gather*}
T_2 T_3 \cdots T_{N-1}\big(E_{N,k-1}\big) = T_2 T_3 \cdots T_{N-1} T_{k-1}\big(E_{N,k}\big) = T_k T_2 T_3 \cdots T_{N-1} \big(E_{N,k}\big)\\
\hphantom{T_2 T_3 \cdots T_{N-1}\big(E_{N,k-1}\big)}{} = -T_k(E_{2,k+1}) \overset{\text{by (\ref{eq:TE-2})}}{=} -E_{2,k}.
\end{gather*}
Thus we have proved the case $m=1$. Next we consider the case $m \geq 2$. Since the case $m=2$ yields the equality for general $m \geq 2$ inductively, it is enough to prove:
\begin{gather}
T_{t_{-\alpha_2}}\big(E_{2,3}(-s)\big) = (-1)^N E_{2,3}(-s+2), \label{eq:m=2_k=2_second}\\
T_{t_{-\alpha_2}}\big(E_{2,k+1}(-s)\big) = -E_{2,k+1}(-s+1) \qquad \text{if $3 \leq k \leq N-1$}. \label{eq:m=2_k+1_second}
\end{gather}
If we prove the assertion for $s=0$, we can prove the other cases by applying $[E_{2,2}(-s),-]$ to this case by using the fact that $E_{2,2}(-s)$ is invariant under $T_{t_{-\alpha_2}}$ for $s \geq 1$ proved in Lemma~\ref{lem:diagonal2}. We prove (\ref{eq:m=2_k=2_second}) for $s=0$. Since we have
\begin{gather*}
T_{N-1} T_{N-2} \cdots T_3 \big(x_2^+\big) = (-1)^{N-3} E_{2,N}
\end{gather*}
by (\ref{eq:decrease}), and
\begin{gather*}
T_0 T_1 T_0 \big(E_{2,N}\big) = T_1 T_0 T_1 \big(E_{2,N}\big) \overset{\text{by (\ref{eq:TE-1})}}{=} T_1 T_0 \big(E_{1,N}\big)
\overset{\text{by Lemma~\ref{lem:0_to_k}}}{=} T_1\big({-}E_{N,1}(2)\big),
\end{gather*}
the left-hand side of (\ref{eq:m=2_k=2_second}) for $s=0$ is
\begin{gather*}
(-1)^{N-2} T_2 T_3 \cdots T_{N-1} T_1 \big(E_{N,1}(2)\big) = (-1)^{N-2} T_2 T_1 T_3 \cdots T_{N-1} \big(E_{N,1}(2)\big) \\
\qquad{} \overset{\text{by (\ref{eq:TE-1})}}{=} (-1)^{N-2} T_2 T_1 \big(E_{3,1}\big) (2)= (-1)^{N-2} T_2 T_1 \big({-}T_2\big(x_1^-\big)\big)(2) \\
\qquad {} = (-1)^{N-1} T_2 \big(x_2^-\big)(2) = (-1)^N E_{2,3}(2).
\end{gather*}
We prove (\ref{eq:m=2_k+1_second}) for $s=0$ by induction on $k$. Assume $k=3$. Since we have $E_{2,4}=T_2\big(x_3^+\big)$, the left-hand side of (\ref{eq:m=2_k+1_second}) for $s=0$ is
\begin{gather*}
T_2 T_3 \cdots T_0 T_1 T_0 \cdots T_3\big(T_2\big(x_3^+\big)\big) = T_2 T_3 \cdots T_0 T_1 T_0 \cdots T_{4} \big(x_{2}^+\big) = T_2 T_3 \cdots T_0 T_1 \big(x_2^+\big) \\
\qquad \overset{\text{by Lemma~\ref{lem:0_to_k}}}{=} T_2 T_3 \cdots T_{N-1} \big(E_{N,3}(1)\big)\overset{\text{by (\ref{eq:TE-1})} }{=} T_2 T_3 (E_{4,3}) (1) = - T_2\big(x_3^+\big)(1) = -E_{2,4}(1).
\end{gather*}
Assume that the assertion holds for $k$.
Since we have $E_{2,k+2} = -T_{k+1}(E_{2,k+1})$ by (\ref{eq:TE-4}), the left-hand side is
\begin{gather*}
T_{t_{-\alpha_2}}\big(E_{2,k+2}\big) = -T_{t_{-\alpha_2}} T_{k+1}\big(E_{2,k+1}\big) = -T_{k+1} T_{t_{-\alpha_2}}\big(E_{2,k+1}\big)\\
\hphantom{T_{t_{-\alpha_2}}\big(E_{2,k+2}\big)}{} = T_{k+1} \big(E_{2,k+1}\big)(1) = -E_{2,k+2}(1).
\end{gather*}
Here the second equality holds by $k \geq 3$.
\end{proof}

For a fixed $m \geq 1$, put
\begin{gather*}
S_{i,j}(p) = E_{i,j}(p)E_{j,i}(m-p).
\end{gather*}

\begin{prop}\label{prop:braket}
We have
\begin{gather}
\big[x_i^+, T_{w(i,m)} \ev\big(x_{i-1,1}^+\big)\big] \nonumber\\
= (-1)^{m-1} \left(A_i + \hbar \sum_{k=1}^N (P_{i,k}-Q_{i,k}) - \hbar \sum_{s=0}^{m-2} E_{i,i}(s+1)E_{i+1,i+1}(-s+m-1) \right),\label{eq:braket}
\end{gather}
where
\begin{gather*}
A_i = \begin{cases}
(1+(N-1)\ve_2)h_{-\theta}(m) + (m-1)\hbar ch_{-\theta}(m)+ \hbar c E_{N,N}(m) & \text{if $i=0$}, \\
(1+N\ve_2)h_1(m) + (m-1)\hbar ch_{1}(m)& \text{if $i=1$}, \\
(1+(i-1)\ve_2)h_{i}(m) + (m-1)\hbar ch_{i}(m) + \hbar c h_{i}(m) & \text{if $2 \leq i \leq N-1$},
\end{cases}
\\
P_{i,k} = \sum_{s \geq 0} S_{k,i}(p(i,k)), \qquad Q_{i,k} = \sum_{s \geq 0} S_{k,i+1}(q(i,k)).
\end{gather*}
The integers $p(i,k)$ and $q(i,k)$ are given by
\begin{gather*}
p(0,k)= \begin{cases}
s+m-1 & \text{if $k=1$},\\
s-m+1 & \text{if $k=2$},\\
s & \text{if $3 \leq k \leq N-1$},\\
s+1 & \text{if $k=N$},
\end{cases}\\
p(i,k)= \begin{cases}
s & \text{if $1 \leq k \leq i-1$},\\
s+1 & \text{if $k=i$},\\
s+m & \text{if $k=i+1$},\\
s-m+2 & \text{if $k=i+2$},\\
s+1 & \text{if $i+3 \leq k \leq N$},
\end{cases}\qquad 1 \leq i \leq N-2,\\
p(N-1,k)= \begin{cases}
s-m+1 & \text{if $k=1$},\\
s & \text{if $2 \leq k \leq N-2$},\\
s+1 & \text{if $k=N-1$},\\
s+m & \text{if $k=N$},
\end{cases}\qquad
q(i,k)=\begin{cases}
p(0,k)+1 & \text{if $i=0$},\\
p(i,k) &\text{otherwise}.
\end{cases}
\end{gather*}
\end{prop}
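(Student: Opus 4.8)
The plan is to perform the whole computation inside $U\big(\hat{\gl}_N\big)_{{\rm comp},-}$: here $T_{w(i,m)}$ denotes the $\hat{\gl}_N$-automorphism and $\ev\big(x_{i-1,1}^+\big)$ is given by the explicit formula of the evaluation theorem, so the left-hand side of~(\ref{eq:braket}) is obtained by applying $T_{w(i,m)}$ to matrix units and taking a single commutator with $x_i^+$. I would first split $\ev\big(x_{i-1,1}^+\big)$ into its linear part, with coefficient $(1+(i-1)\ve_2)$ (read as $(1+N\ve_2)$ when $i=1$), and its quadratic part $\hbar\sum_{s,k}(\,\cdot\,)$. For the linear part, Lemma~\ref{lem:classical_part} gives $T_{w(i,m)}\big(x_{i-1}^+\big)=(-1)^{m-1}E_{i+1,i}(m)$ (resp.\ $(-1)^{m-1}E_{1,N}(m-1)$ for $i=0$), and then Lemma~\ref{lem:braket_pre} yields the contribution $(-1)^{m-1}(1+(i-1)\ve_2)h_i(m)$ (resp.\ proportional to $h_{-\theta}(m)$ for $i=0$), which is exactly the leading $\ve_2$-dependent summand of $A_i$.

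The heart of the proof is the quadratic part. For the base case $i=1$ one expands
\[
\ev\big(x_{0,1}^+\big)=(1+N\ve_2)x_0^+ + \hbar\sum_{s\geq 0}\sum_{k=1}^N E_{k,1}(s+1)E_{N,k}(-s),
\]
applies $T_{w(1,m)}$ termwise using the four cases of Lemma~\ref{lem:i=1}, and brackets with $x_1^+=E_{1,2}$ by the Leibniz rule. I would then obtain the remaining cases $2\leq i\leq N-1$ and $i=0$ by transport along $\rho$: the identities $\rho\circ T_{w(i,m)}=T_{w(i-1,m)}\circ\rho$ (Lemma~\ref{lem:rho_omega_and_T}(ii)), $\rho\circ\ev=\ev\circ\rho$ (Proposition~\ref{prop:ev_and_auto}(ii)), $\rho\big(x_i^+\big)=x_{i-1}^+$, and Lemma~\ref{lem:rho} for $\rho$ on matrix units relate the index-$i$ expression to the index-$(i-1)$ one. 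The only subtlety is that $\rho\big(x_{i-1,1}^+\big)=x_{i-2,1}^+ + \ve_2 x_{i-2}^+$ introduces an extra $\ve_2(-1)^{m-1}h_{i-1}(m)$ (again via Lemma~\ref{lem:braket_pre}); this is precisely what shifts the linear coefficient between successive values of~$i$ and reproduces the patterns $(1+(i-1)\ve_2)$, $(1+N\ve_2)$, $(1+(N-1)\ve_2)$ in the three cases of $A_i$.

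The main obstacle is the bookkeeping of the quadratic and central terms. Bracketing $E_{i,i+1}$ against each transported monomial produces, via Leibniz, the matrix products that reassemble into $P_{i,k}=\sum_s S_{k,i}(p(i,k))$ and $-Q_{i,k}=-\sum_s S_{k,i+1}(q(i,k))$; matching the shifted arguments $p(i,k),q(i,k)$ requires care exactly at the special indices $k=i,i+1,i+2$, where the exponents in Lemma~\ref{lem:i=1} jump by $\pm m$ instead of by the generic amount. The central contributions come partly from the $\delta_{s,0}(m-1)c$ correction in the $k=N$ case of Lemma~\ref{lem:i=1} (which, bracketed with $x_i^+$, gives the term $(m-1)\hbar c\,h_i(m)$) and partly from the central corrections in Lemmas~\ref{lem:rho} and~\ref{lem:i=1_partial} under $\rho$-transport, accounting for the remaining $\hbar c\,E_{N,N}(m)$ and $\hbar c\,h_i(m)$ pieces of $A_i$. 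Finally, the mismatch of summation ranges created by these finitely many index shifts collapses into the correction $-\hbar\sum_{s=0}^{m-2}E_{i,i}(s+1)E_{i+1,i+1}(-s+m-1)$. Once the three boundary indices $i=0,1,N-1$ are verified by hand, the generic terms assemble termwise into the right-hand side of~(\ref{eq:braket}).
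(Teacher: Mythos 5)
Your proposal is correct and follows essentially the same route as the paper: establish the case $i=1$ by applying Lemma~\ref{lem:i=1} termwise, bracket with $x_1^+$ via the Leibniz rule (with the diagonal cross-terms reassembling into $S_{1,1}-S_{2,2}$ plus the finite correction sum), and then propagate to $i=0, N-1, N-2, \dots, 2$ by applying $\rho$, tracking the extra $\ve_2(-1)^{m-1}h_{i-1}(m)$ from $\rho\big(x_{i-1,1}^+\big)=x_{i-2,1}^++\ve_2 x_{i-2}^+$ and the central corrections from Lemmas~\ref{lem:rho} and~\ref{lem:i=1}. All the sources of the terms $A_i$, $P_{i,k}$, $Q_{i,k}$ and of the boundary sum are correctly identified, so only the routine bookkeeping remains to be written out.
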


\begin{proof}We prove the assertion for $i=1$. By Lemma~\ref{lem:i=1}, we have
\begin{gather*}
(-1)^{m-1}T_{w(1,m)}\ev\big(x_{0,1}^+\big) \\
 \qquad{} = (-1)^{m-1}(1+N\ve_2)T_{w(1,m)}\big(x_0^+\big) + (-1)^{m-1}\hbar \sum_{s \geq 0} \sum_{k=1}^N T_{w(1,m)}\big(E_{k,1}(s+1)E_{N,k}(-s)\big) \\
\qquad{} =(-1)^{m-1}(1+N\ve_2)T_{w(1,m)}\big(x_0^+\big) + (m-1)\hbar c E_{2,1}(m)\\
\qquad \quad{} + \hbar \sum_{s \geq 0} \bigg( E_{1,1}(s+1)E_{2,1}(-s+m-1) + E_{2,1}(s+m)E_{2,2}(-s) \\
 \qquad \quad{}+ E_{3,1}(s-m+2)E_{2,3}(-s+2m-2) + \sum_{k=4}^N E_{k,1}(s+1)E_{2,k}(-s+m-1) \bigg).
\end{gather*}
Therefore
\begin{gather}
(-1)^{m-1} [x_1^+, T_{w(1,m)} \ev(x_{0,1}^+)] = (1+N\ve_2)h_1(m) + (m-1)\hbar c h_1(m) \nonumber\\
{}+ \hbar \sum_{s \geq 0} \bigg( {-}S_{1,2}(s+1) + E_{1,1}(s+1)h_1(-s+m-1) + h_1(s+m)E_{2,2}(-s) + S_{2,1}(s+m)\nonumber\\
{}-S_{3,2}(s-m+2) + S_{3,1}(s-m+2) + \sum_{k=4}^N \big( {-}S_{k,2}(s+1) + S_{k,1}(s+1) \big) \bigg). \label{eq:braket_i=1}
\end{gather}
Here we use Lemma~\ref{lem:braket_pre}. Since we have
\begin{gather*}
\sum_{s \geq 0} \big( E_{1,1}(s+1)h_1(-s+m-1) + h_1(s+m)E_{2,2}(-s) \big)\\
\qquad {}=\sum_{s \geq 0} \big( S_{1,1}(s+1)-S_{2,2}(s+m) \big) - \sum_{s=0}^{m-2} E_{1,1}(s+1)E_{2,2}(-s+m-1),
\end{gather*}
the right-hand side of (\ref{eq:braket_i=1}) is equal to
\begin{gather*}
(1+N\ve_2)h_1(m) + (m-1)\hbar c h_1(m) \\
\qquad{} + \hbar \sum_{s \geq 0} \bigg( \bigg( S_{1,1}(s+1) + S_{2,1}(s+m) + S_{3,1}(s-m+2) + \sum_{k=4}^N S_{k,1}(s+1) \bigg) \\
\qquad{} - \bigg( S_{1,2}(s+1) + S_{2,2}(s+m) + S_{3,2}(s-m+2) + \sum_{k=4}^N S_{k,2}(s+1) \bigg) \bigg)\\
\qquad{} - \hbar \sum_{s=0}^{m-2} E_{1,1}(s+1)E_{2,2}(-s+m-1).
\end{gather*}
Hence the assertion holds for $i=1$. Then apply $\rho$ to (\ref{eq:braket}) for $i=1$. The left-hand side is
\begin{gather*}
\begin{split}&
 \rho\big(\big[x_1^+, T_{w(1,m)} \ev\big(x_{0,1}^+\big)\big]\big)=\big[x_0^+, T_{w(0,m)} \ev\big(x_{0,1}^++\ve_2 x_0^+\big)\big]\\
& \hphantom{\rho\big(\big[x_1^+, T_{w(1,m)} \ev\big(x_{0,1}^+\big)\big]\big)}{} = \big[x_0^+, T_{w(0,m)} \ev\big(x_{0,1}^+\big)\big] + \ve_2 h_{-\theta}(m).
\end{split}
\end{gather*}
Here we use Proposition~\ref{prop:ev_and_auto}(ii) and Lemma~\ref{lem:braket_pre}. Consider the right-hand side. We can see
\begin{gather*}
\rho(A_1) = A_0 + \ve_2 h_{-\theta}(m) - \hbar c E_{N,N}(m),\\
\rho(P_{1,k}) = \begin{cases}
P_{0,N} + E_{N,N}(m)c & \text{if $k=1$},\\
P_{0,k-1} & \text{if $2 \leq k \leq N$},
\end{cases}\qquad
\rho(Q_{1,k}) = Q_{0,k-1},\\
\rho\big(E_{1,1}(s+1)E_{2,2}(-s+m-1)\big) = E_{N,N}(s+1)E_{1,1}(-s+m-1).
\end{gather*}
Hence the assertion holds for $i=0$. Then apply $\rho$ to (\ref{eq:braket}) for $i=0$. Similarly the left-hand side is
\begin{gather*}
 \rho\big(\big[x_0^+, T_{w(0,m)} \ev\big(x_{N-1,1}^+\big)\big]\big)= \big[x_{N-1}^+, T_{w(N-1,m)} \ev\big(x_{N-2,1}^+\big)\big] + \ve_2 h_{N-1}(m),
\end{gather*}
and for the right-hand side, we can see
\begin{gather*}
\rho(A_0) = A_{N-1} + \ve_2 h_{N-1}(m) + \hbar c E_{N,N}(m),\\
\rho(P_{0,k}) = P_{N-1,k-1}, \qquad \rho(Q_{0,k}) = \begin{cases}
Q_{N-1,N} - E_{N,N}(m)c & \text{if $k=1$},\\
Q_{N-1,k-1} & \text{if $2 \leq k \leq N$},
\end{cases}\\
\rho\big(E_{N,N}(s+1)E_{1,1}(-s+m-1)\big) = E_{N-1,N-1}(s+1)E_{N,N}(-s+m-1).
\end{gather*}
Note that $c$ never appears in the last equality as $-s+m-1$ cannot be $0$ for $0 \leq s \leq m-2$. Hence the assertion holds for $i=N-1$. Continuing this process we prove the assertions for $i=N-2, N-3,\ldots,2$ since we have
\begin{gather*}
\rho(A_{i+1}) = A_{i} + \ve_2 h_{i}(m),\qquad \rho(P_{i+1,k}) = P_{i,k-1}, \qquad \rho(Q_{i+1,k}) =Q_{i,k-1},\\
\rho\big(E_{i+1,i+1}(s+1)E_{i+2,i+2}(-s+m-1)\big) = E_{i,i}(s+1)E_{i+1,i+1}(-s+m-1).\tag*{\qed}
\end{gather*}\renewcommand{\qed}{}
\end{proof}
\begin{prop}\label{prop:Heisenberg}
We have
\begin{gather}
\ev\left( (-1)^{m-1} \sum_{i=0}^{N-1} \big[x_{i}^+, T_{w(i,m)}\big(x_{i-1,1}^+\big)\big] \right) = \ve_2 \bfid(m) + \hbar R_m, \label{eq:Heisenberg}
\end{gather}
where
\begin{gather*}
R_m= (-1)^{m-1}\sum_{i=0}^{N-1} x_{i}^+ T_{w(i,m)}\big(x_{i-1}^+\big) + \sum_{p=1}^{m-1} \sum_{1 \leq i \leq j \leq N-1} h_i(p)h_j(m-p) \\
\hphantom{R_m=}{} - \sum_{p=-m+2}^{m-1} \bigg( E_{1,N}(p-1)E_{N,1}(m-p+1) + \sum_{i=0}^{N-2} E_{i+2,i+1}(p)E_{i+1,i+2}(m-p) \bigg) \\
\hphantom{R_m=}{} + \sum_{p=-m+2}^{0} \bigg( E_{1,N-1}(p-1)E_{N-1,1}(m-p+1) +E_{2,N}(p-1)E_{N,2}(m-p+1) \\
\hphantom{R_m=}{} + \sum_{i=0}^{N-3} E_{i+3,i+1}(p)E_{i+1,i+3}(m-p) \bigg).
\end{gather*}
\end{prop}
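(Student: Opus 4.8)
The plan is to reduce the left-hand side of~\eqref{eq:Heisenberg} to the sum of the expressions already computed in Proposition~\ref{prop:braket}, and then to split the result into a linear part producing $\ve_2 \bfid(m)$ and a quadratic part producing $\hbar R_m$. First I would use that $\ev$ is an algebra homomorphism with $\ev(x_i^+)=x_i^+$ which commutes with every $T_j$ by Proposition~\ref{prop:ev_and_auto}(iii), hence with $T_{w(i,m)}$. This gives $\ev\big(\big[x_i^+,T_{w(i,m)}\big(x_{i-1,1}^+\big)\big]\big)=\big[x_i^+,T_{w(i,m)}\ev\big(x_{i-1,1}^+\big)\big]$, so each summand is exactly the left-hand side of~\eqref{eq:braket}. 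Multiplying by $(-1)^{m-1}$ cancels the global sign of Proposition~\ref{prop:braket} and summing over $i$ yields
\begin{gather*}
\ev\left((-1)^{m-1}\sum_{i=0}^{N-1}\big[x_i^+,T_{w(i,m)}\big(x_{i-1,1}^+\big)\big]\right) \\
= \sum_{i=0}^{N-1}A_i + \hbar\sum_{i=0}^{N-1}\left(\sum_{k=1}^N(P_{i,k}-Q_{i,k}) - \sum_{s=0}^{m-2}E_{i,i}(s+1)E_{i+1,i+1}(-s+m-1)\right).
\end{gather*}

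Next I would prove $\sum_{i=0}^{N-1}A_i=\ve_2\bfid(m)$. The coefficients of $1$ telescope to $h_{-\theta}(m)+\sum_{i=1}^{N-1}h_i(m)=0$, so the genuinely linear contribution comes only from the $\ve_2$-coefficients; using $h_i(m)=E_{i,i}(m)-E_{i+1,i+1}(m)$ and $h_{-\theta}(m)=E_{N,N}(m)-E_{1,1}(m)$ these collect into $\ve_2\big(\bfid(m)-NE_{2,2}(m)\big)$. The central contributions split as $(m-1)\hbar c\big(h_{-\theta}(m)+\sum_i h_i(m)\big)=0$ plus a remainder $\hbar c\,E_{2,2}(m)$, and the hypothesis $\hbar c=N\ve_2$ turns this into $N\ve_2 E_{2,2}(m)$, exactly cancelling the spurious term and leaving $\ve_2\bfid(m)$.

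The main work is the quadratic identity
\begin{gather*}
\sum_{i=0}^{N-1}\left(\sum_{k=1}^N(P_{i,k}-Q_{i,k}) - \sum_{s=0}^{m-2}E_{i,i}(s+1)E_{i+1,i+1}(-s+m-1)\right)=R_m.
\end{gather*}
I would reorganize the double sum by the family $S_{k,j}$ it feeds: a term $S_{k,j}(p)$ appears with a plus sign inside $P_{j,k}$ and with a minus sign inside $Q_{j-1,k}$, so its total weight is $\sum_{s\geq0}\big(S_{k,j}(p(j,k))-S_{k,j}(q(j-1,k))\big)$. Reading off the explicit shifts, $p(j,k)$ and $q(j-1,k)$ coincide as affine functions of $s$ for all pairs $(k,j)$ except the finitely many with $k$ within distance two of $j$ (plus the boundary pairs near $i=0$, where $q(0,k)=p(0,k)+1$, and the wraparound indices). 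For the coinciding pairs the infinite tails cancel outright; each exceptional pair contributes a finite difference, of one of the three shapes $\sum_{s\geq0}(S(s+1)-S(s+m))=\sum_{p=1}^{m-1}S(p)$, $\sum_{s\geq0}(S(s-m+2)-S(s+m))=\sum_{p=-m+2}^{m-1}S(p)$, or $\sum_{s\geq0}(S(s-m+2)-S(s+1))=\sum_{p=-m+2}^{0}S(p)$. The diagonal families $k=j$, combined with the correction sums and rewritten in the $h$-basis, give $\sum_{p=1}^{m-1}\sum_{1\leq i\leq j\leq N-1}h_i(p)h_j(m-p)$; the surviving distance-one and distance-two off-diagonal families give the two remaining sums of $EE$ products, while the $p=0$ boundary terms of the distance-one families reassemble, via Lemma~\ref{lem:classical_part}, into $(-1)^{m-1}\sum_i x_i^+T_{w(i,m)}(x_{i-1}^+)$.

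The hard part is precisely this last step: keeping track of which pairs $(k,j)$ escape the telescoping, reindexing each surviving sum to its correct finite range of $p$, and handling the wraparound products ($E_{1,N}$, $E_{2,N}$, $E_{1,N-1}$) and the diagonal-to-$h$ conversion with the prescribed ordering $i\leq j$. It is worth noting that no central terms enter this quadratic part, since all $c$-contributions were already absorbed into the $A_i$; this is what makes the splitting into $\ve_2\bfid(m)$ and $\hbar R_m$ clean.
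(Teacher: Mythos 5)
Your proposal is correct and follows essentially the same route as the paper: reduce to Proposition~\ref{prop:braket} via $\ev \circ T_{w(i,m)} = T_{w(i,m)} \circ \ev$, verify $\sum_i A_i = \ve_2\,\bfid(m)$ using $\hbar c = N\ve_2$, and then pair $P_{j,k}$ with $Q_{j-1,k}$ so that the infinite sums telescope, leaving exactly the finite sums (single terms, diagonal-to-$h$ conversion, and the distance-one and distance-two families) that constitute $R_m$. The paper performs the identical cancellation by explicitly listing $\sum_{k}(P_{i+1,k}-Q_{i,k})$ case by case, which is just the unpacked version of your telescoping argument.
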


\begin{rem}The point of the statement of Proposition~\ref{prop:Heisenberg} is as follows: although each term $\ev\big(\big[x_{i}^+, T_{w(i,m)}\big(x_{i-1,1}^+\big)\big]\big)$ lies in the completion of $U\big(\hat{\gl}_N\big)$, we obtain $R_m$ an element of $U\big(\affsl\big)$ as a remainder term after cancellation.
\end{rem}

\begin{proof} We use the notation in Proposition~\ref{prop:braket}. We have
\begin{gather*}
\sum_{i=0}^{N-1} A_i = \ve_2 \left((N-1)h_{-\theta} + N h_1 + \sum_{i=2}^{N-1}(i-1)h_i\right)(m) + \hbar c \left(E_{N,N}+\sum_{i=2}^{N-1} h_i\right)(m) \\
\hphantom{\sum_{i=0}^{N-1} A_i}{} = \ve_2 \left( (N-1)h_{-\theta} + Nh_1 + \sum_{i=2}^{N-1} (N+i-1) h_i + NE_{N,N} \right)(m) \\
\hphantom{\sum_{i=0}^{N-1} A_i}{} = \ve_2 \left( \sum_{i=1}^{N-1} i h_i + NE_{N,N}\right)(m)=\ve_2 \bfid(m).
\end{gather*}
In the second equality we use the condition $\hbar c = N\ve_2$. We compute $\sum\limits_{k=1}^{N} (P_{i+1,k}-Q_{i,k})$ as follows:
\begin{gather*}
\sum_{k=1}^{N} (P_{0,k}-Q_{N-1,k}) = - \sum_{s=0}^{2m-3}S_{1,N}(s-m+1) + \sum_{s=0}^{m-2}S_{2,N}(s-m+1)\\
\hphantom{\sum_{k=1}^{N} (P_{0,k}-Q_{N-1,k})=}{} +S_{N-1,N}(0)+\sum_{s=0}^{m-2} S_{N,N}(s+1),
\\
\sum_{k=1}^{N} (P_{1,k}-Q_{0,k}) = \sum_{s=0}^{m-2} S_{1,1}(s+1) - \sum_{s=0}^{2m-3}S_{2,1}(s-m+2) \\
\hphantom{\sum_{k=1}^{N} (P_{1,k}-Q_{0,k}) =}{} + \sum_{s=0}^{m-2}S_{3,1}(s-m+2)+S_{N,1}(1),
\\
\sum_{k=1}^{N} (P_{i+1,k}-Q_{i,k}) = S_{i,i+1}(0) + \sum_{s=0}^{m-2} S_{i+1,i+1}(s+1)- \sum_{s=0}^{2m-3}S_{i+2,i+1}(s-m+2) \\
\hphantom{\sum_{k=1}^{N} (P_{i+1,k}-Q_{i,k}) =}{} + \sum_{s=0}^{m-2}S_{i+3,i+1}(s-m+2) \qquad \text{for}\ 1 \leq i \leq N-3,
\\
\sum_{k=1}^{N} (P_{N-1,k}-Q_{N-2,k})= \sum_{s=0}^{m-2}S_{1,N-1}(s-m+1)+S_{N-2,N-1}(0)\\
\hphantom{\sum_{k=1}^{N} (P_{N-1,k}-Q_{N-2,k})=}{} +\sum_{s=0}^{m-2} S_{N-1,N-1}(s+1) - \sum_{s=0}^{2m-3}S_{N,N-1}(s-m+2).
\end{gather*}
Hence the assertion holds by
\begin{gather*}
(-1)^{m-1} x_i^+ T_{w(i,m)}\big(x_{i-1}^+\big) = \begin{cases}
S_{N,1}(1) & \text{if $i=0$,}\\
S_{i,i+1}(0) & \text{otherwise}
\end{cases}
\end{gather*}
and
\begin{gather*}
\sum_{i=0}^{N-1} \big( S_{i,i}(s+1) - E_{i,i}(s+1) E_{i+1,i+1}(-s+m-1) \big) \\
\qquad{}= \left( \sum_{i=1}^{N-1} E_{i,i}(s+1) h_{i}(-s+m-1) \right)+ E_{N,N}(s+1)h_{-\theta}(-s+m-1)\\
\qquad{} = \sum_{i=1}^{N-1} (E_{i,i}-E_{N,N})(s+1) h_{i}(-s+m-1) = \sum_{i=1}^{N-1} \sum_{j \geq i} h_j(s+1) h_i(-s+m-1).\tag*{\qed}
\end{gather*}\renewcommand{\qed}{}
\end{proof}

Applying $\omega$ to (\ref{eq:Heisenberg}), we obtain
\begin{gather*}
\ev\left( (-1)^{m-1} \sum_{i=0}^{N-1} \big[T_{w(i,m)}\big(x_{i-1,1}^-\big), x_{i}^-\big] \right) = \ve_2 \bfid(-m) + \hbar \omega(R_m).
\end{gather*}
Now the following theorem has been proved.
\begin{thm}\label{thm:image}Assume $\hbar c =N\ve_2$ and $\ve_2 \neq 0$. Let $R_m$ $(m \geq 1)$ be the element of $U(\hat{\mathfrak{sl}}_N) \subset \affY$ as in Proposition~{\rm \ref{prop:Heisenberg}}, and define for $m \in \bbZ$,
\begin{gather*}
a_{m} = \dfrac{1}{\ve_2} \times \begin{cases}
\displaystyle \left( (-1)^{m-1} \sum_{i=0}^{N-1} \big[x_{i}^+, T_{w(i,m)}\big(x_{i-1,1}^+\big)\big] - \hbar R_m\right) & \text{if $m>0$},\\
\displaystyle \left( \sum_{i=0}^{N-1} \tilde{h}_{i,1} - c - \tfrac{\hbar}{2} c^2\right) & \text{if $m=0$},\\
\displaystyle \left((-1)^{m-1} \sum_{i=0}^{N-1} \big[T_{w(i,-m)}\big(x_{i-1,1}^-\big),x_{i}^-\big] - \hbar \omega(R_{-m})\right) & \text{if $m<0$}.
\end{cases}
\end{gather*}
Then we have $\ev(a_m) = \bfid(m)$. In particular, the image of the evaluation map $\ev$ contains $U\big(\hat{\gl}_N\big)$.
\end{thm}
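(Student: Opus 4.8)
The plan is to recognize that essentially all of the computational content has already been discharged in Propositions~\ref{prop:zero-mode} and~\ref{prop:Heisenberg} (together with its $\omega$-twisted counterpart displayed just above the theorem), so that the theorem itself is a matter of assembling these identities and cancelling the remainder terms. The single conceptual ingredient is that $\ev$ restricts to the identity on the subalgebra $U\big(\affsl\big) \subset \affY$, which is immediate from the definition $\ev(x_i^{\pm})=x_i^{\pm}$, $\ev(h_i)=h_i$. Combined with the observation recorded in the Remark after Proposition~\ref{prop:Heisenberg} that $R_m$ genuinely lies in $U\big(\affsl\big)$ rather than merely in the completion $U\big(\hat{\gl}_N\big)_{\mathrm{comp},-}$, this means that $R_m$ may be regarded as an element of $\affY$ and satisfies $\ev(R_m)=R_m$; hence subtracting $\hbar R_m$ cancels the unwanted tail exactly, and the resulting $a_m$ is a legitimate element of $\affY$.

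First I would treat the three cases of the definition separately. For $m=0$, Proposition~\ref{prop:zero-mode} gives $\ev\big(\sum_{i=0}^{N-1}\tilde{h}_{i,1}\big)=\ve_2\bfid + c + \tfrac{\hbar}{2}c^2$, so $\ev(a_0)=\bfid=\bfid(0)$ at once. For $m>0$, I would apply $\ev$ to the defining expression and invoke Proposition~\ref{prop:Heisenberg} together with $\ev(R_m)=R_m$:
\[
\ev(a_m) = \frac{1}{\ve_2}\left( \ve_2\,\bfid(m) + \hbar R_m - \hbar R_m \right) = \bfid(m).
\]
For $m<0$, I would use the identity obtained by applying $\omega$ to~(\ref{eq:Heisenberg}), rewritten with summation index $-m\geq 1$, which (using $(-1)^{m-1}=(-1)^{-m-1}$) reads
\[
\ev\left( (-1)^{m-1}\sum_{i=0}^{N-1} \big[T_{w(i,-m)}\big(x_{i-1,1}^-\big), x_i^-\big] \right) = \ve_2\,\bfid(m) + \hbar\,\omega(R_{-m}),
\]
and, since $\omega(R_{-m}) \in U\big(\affsl\big)$ is again fixed by $\ev$, conclude $\ev(a_m)=\bfid(m)$ by the same cancellation.

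Finally, for the ``in particular'' clause I would note that the image of $\ev$ already contains $U\big(\affsl\big)$, and that the vector space decomposition $\gl_N=\fraksl_N \oplus \bbC\bfid$ gives $\hat{\gl}_N = \affsl \oplus \big(\bbC\bfid \otimes \bbC[t,t^{-1}]\big)$; hence $U\big(\hat{\gl}_N\big)$ is generated as an algebra by $U\big(\affsl\big)$ together with the elements $\bfid(m)$, $m\in\bbZ$. Since each $\bfid(m)=\ev(a_m)$ lies in the image, the image contains all of $U\big(\hat{\gl}_N\big)$. I do not expect any genuine obstacle in the theorem itself, as the delicate work lives in the preceding propositions; the only points requiring care are confirming that $R_m$ (and $\omega(R_{-m})$) lie in $U\big(\affsl\big)$ so that the subtracted terms actually belong to $\affY$, and tracking the sign identity $(-1)^{m-1}=(-1)^{-m-1}$ and the Weyl-element substitution $w(i,-m)$ when passing from Proposition~\ref{prop:Heisenberg} to the $m<0$ case.
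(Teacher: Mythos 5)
Your proposal is correct and follows essentially the same route as the paper: the paper likewise treats the theorem as an immediate assembly of Proposition~\ref{prop:zero-mode}, Proposition~\ref{prop:Heisenberg}, and the identity obtained by applying $\omega$ to~(\ref{eq:Heisenberg}) (using $\omega\circ\ev=\ev\circ\,\omega$ from Proposition~\ref{prop:ev_and_auto}), with the cancellation of $\hbar R_m$ justified exactly by the observation that $R_m\in U\big(\affsl\big)$ is fixed by $\ev$.
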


\begin{cor}Assume $\hbar c =N\ve_2$ and $\ve_2 \neq 0$. Then the pull-back of an irreducible $\hat{\gl}_N$-module by the evaluation map $\ev$ is irreducible as a module of $\affY$.
\end{cor}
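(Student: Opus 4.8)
The plan is to derive the corollary directly from Theorem~\ref{thm:image} by comparing $\affY$-submodules with $\hat{\gl}_N$-submodules. Fix an irreducible $\hat{\gl}_N$-module $V$ belonging to the category on which $U\big(\hat{\gl}_N\big)_{{\rm comp},-}$ acts, so that the pull-back by $\ev$ is defined, and view $V$ as a $\affY$-module via $y \cdot v = \ev(y)v$. The whole point will be that this $\affY$-action and the original $\hat{\gl}_N$-action have exactly the same invariant subspaces.

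First I would identify which operators on $V$ arise from the image of $\ev$. Since $\ev$ restricts to the identity on the Chevalley generators $x_i^{\pm}$, $h_i$, the subalgebra $U\big(\affsl\big) \subseteq \affY$ acts on $V$ through the standard embedding $U\big(\affsl\big) \subseteq U\big(\hat{\gl}_N\big)$. By Theorem~\ref{thm:image} we have the identities $\ev(a_m) = \bfid(m)$ in $U\big(\hat{\gl}_N\big)_{{\rm comp},-}$ for all $m \in \bbZ$; as these are equalities in the algebra that acts on $V$, the element $a_m \in \affY$ acts on $V$ by the operator $\bfid(m)$. Because $U\big(\hat{\gl}_N\big)$ is generated as an algebra by $U\big(\affsl\big)$ together with the diagonal Heisenberg generators $\bfid(m)$, it follows that every operator in the $\hat{\gl}_N$-action on $V$ is realized as $\ev(a)$ for some $a \in \affY$.

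Now I would run the submodule comparison. Let $W \subseteq V$ be a nonzero $\affY$-submodule, so that $\ev(y)W \subseteq W$ for all $y \in \affY$. In particular $W$ is stable under the operators giving the $U\big(\affsl\big)$-action and under each $\bfid(m) = \ev(a_m)$; by the previous step these operators exhaust the entire $\hat{\gl}_N$-action, so $W$ is a $\hat{\gl}_N$-submodule of $V$. Irreducibility of $V$ over $\hat{\gl}_N$ then gives $W = V$, proving that $V$ is irreducible as a $\affY$-module.

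The argument is essentially formal once Theorem~\ref{thm:image} is available, and I do not expect a genuine obstacle. The one place deserving care is the completion: one must observe that the pull-back is well-defined precisely for the modules on which $U\big(\hat{\gl}_N\big)_{{\rm comp},-}$ acts, and that the equality $\ev(a_m) = \bfid(m)$, holding in this completed algebra, transfers to an equality of operators on $V$. No convergence question arises for the submodule $W$, since its stability is only tested against operators already known to act on all of $V$.
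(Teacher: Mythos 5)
Your argument is correct and is exactly the intended deduction: the paper states the corollary without proof as an immediate consequence of Theorem~\ref{thm:image}, whose final assertion that the image of $\ev$ contains $U\big(\hat{\gl}_N\big)$ is precisely what makes every $\affY$-submodule of the pull-back a $\hat{\gl}_N$-submodule. Your extra care about the completion and about $U\big(\hat{\gl}_N\big)$ being generated by $U\big(\affsl\big)$ together with the $\bfid(m)$ is sound and fills in the details the paper leaves implicit.
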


\begin{rem}If $\ve_2 = 0$, we cannot apply our construction. In fact, if we assume $c \neq 0$, the condition $\hbar c = N\ve_2$ and $\ve_2 = 0$ implies $\ve_1=0$. The affine Yangian at $\ve_1 = \ve_2 = 0$ is isomorphic to the universal enveloping algebra of the universal central extension of the Lie algebra $\mathfrak{sl}_N\big[s,t^{\pm 1}\big]$. Moreover the evaluation map becomes the genuine evaluation at $s=0$. In this situation, the image of the evaluation map is $U\big(\affsl\big)$, and hence it does not contain the Heisenberg algebra generated by $\bfid(m)$ ($m\in \bbZ$).
\end{rem}
\begin{rem}We do not know whether the elements $a_m$ ($m \in \bbZ$) satisfy the Heisenberg relations. The construction of the Heisenberg subalgebra inside the affine Yangian will be left as a future work.
\end{rem}

\appendix
\section{General case}\label{appendixA}
We use the notation $(\ad X)^{(n)}(Y) = (\ad X)^{n}(Y) / n!$ for divided power operators.

\subsection{Yangian and braid group action}

Let $(a_{ij})_{i,j \in I}$ be a symmetrizable generalized Cartan matrix and fix integers $(d_i)_{i \in I}$ such that $(d_i a_{ij})_{i,j \in I}$ is symmetric. We denote by $\frakg$ the corresponding Kac--Moody Lie algebra. Then the Yangian $Y(\frakg)$ is defined to be generated by $x_{i,r}^{+}$, $x_{i,r}^{-}$, $h_{i,r}$ ($i \in I$, $r \in \mathbb{Z}_{\geq 0}$) with a parameter $\hbar \in \bbC$ subject to the relations:
\begin{gather*}
[h_{i,r}, h_{j,s}] = 0, \qquad \big[x_{i,r}^{+}, x_{j,s}^{-}\big] = \delta_{ij} h_{i, r+s}, \qquad \big[h_{i,0}, x_{j,r}^{\pm}\big] = \pm d_i a_{ij} x_{j,r}^{\pm},\\
\big[h_{i, r+1}, x_{j, s}^{\pm}\big] - \big[h_{i, r}, x_{j, s+1}^{\pm}\big] = \pm d_i a_{ij} \tfrac{\hbar}{2} \big\{h_{i, r}, x_{j, s}^{\pm}\big\},\\
\big[x_{i, r+1}^{\pm}, x_{j, s}^{\pm}\big] - \big[x_{i, r}^{\pm}, x_{j, s+1}^{\pm}\big] = \pm d_i a_{ij}\tfrac{\hbar}{2} \big\{x_{i, r}^{\pm}, x_{j, s}^{\pm}\big\},\\
\sum_{w \in \mathfrak{S}_{1 - a_{ij}}}\big[x_{i,r_{w(1)}}^{\pm}, \big[x_{i,r_{w(2)}}^{\pm}, \dots, \big[x_{i,r_{w(1 - a_{ij})}}^{\pm}, x_{j,s}^{\pm}\big]\dots\big]\big] = 0, \qquad i \neq j.
\end{gather*}
Set $x_{i}^{\pm} = x_{i,0}^{\pm}$, $h_{i} = h_{i,0}$. Then the standard Chevalley generators of $\frakg$ are identified with $d_i^{-1/2} x_i^+$, $d_i^{-1/2} x_i^-$, $d_i^{-1}h_i$.

Following \cite{MR3861718}, we define
\begin{gather*}
T_i = \exp\ad d_i^{-1/2}x_i^+ \exp\ad \big({-}d_i^{-1/2}x_i^-\big) \exp\ad d_i^{-1/2}x_i^+.
\end{gather*}
In the sequel, we put $e_i = d_i^{-1/2}x_i^+$, $f_i=d_i^{-1/2}x_i^-$.
\begin{prop}
The operators $\{T_i\}$ satisfy the braid relations. That is, we have
\begin{gather*}
T_i T_j = T_j T_i \quad \text{if $a_{ij}=0$},\qquad T_i T_j T_i = T_j T_i T_j \quad \text{if $a_{ij}=-1$},\\
T_i T_j T_i T_j = T_j T_i T_j T_i \quad \text{if $a_{ij}=-2$},\qquad T_i T_j T_i T_j T_i T_j = T_j T_i T_j T_i T_j T_i \quad \text{if $a_{ij}=-3$}.
\end{gather*}
\end{prop}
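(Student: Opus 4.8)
The plan is to follow the strategy used in the proof of Proposition~\ref{prop:braid}, reducing each braid relation to a single conjugation identity and then invoking the conjugation formula $\varphi\circ\exp\ad X\circ\varphi^{-1}=\exp\ad\varphi(X)$ to transfer it from $U(\frakg)$ to the whole Yangian $Y(\frakg)$. For $a_{ij}=0,-1,-2,-3$ (with $a_{ji}=-1$ in the last two, so that the pair generates a subsystem of type $B_2$, resp.\ $G_2$) the order $m$ of $s_is_j$ equals $2,3,4,6$, and the rank-two subalgebra generated by $e_i,f_i,e_j,f_j$ is of type $A_1\times A_1$, $A_2$, $B_2$, $G_2$ respectively. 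Write $g=\underbrace{T_iT_j\cdots}_{m-1}$ for the product of the first $m-1$ factors of the left-hand word, and let $c$ be the index of its $m$-th factor, so $c=i$ for $m$ odd and $c=j$ for $m$ even. Since the right-hand word $\underbrace{T_jT_i\cdots}_{m}$ has first factor $T_j$ and remaining factors $\underbrace{T_iT_j\cdots}_{m-1}=g$, it equals $T_j g$; hence the braid relation is equivalent to the single identity $g\,T_c\,g^{-1}=T_j$.

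First I would apply the conjugation formula to write $g\,T_c\,g^{-1}=\exp\ad\!\bigl(g(e_c)\bigr)\,\exp\ad\!\bigl({-}g(f_c)\bigr)\,\exp\ad\!\bigl(g(e_c)\bigr)$. Comparing with the definition of $T_j$, this shows that $g\,T_c\,g^{-1}=T_j$ holds as an automorphism of all of $Y(\frakg)$ as soon as the two Lie-algebra identities $g(e_c)=e_j$ and $g(f_c)=f_j$ hold. Because $e_c,f_c\in U(\frakg)$ and each operator $\exp\ad e_k$, $\exp\ad f_k$ preserves the subalgebra $U(\frakg)$, these identities may be verified inside $U(\frakg)$, and only the four generators $e_i,f_i,e_j,f_j$ intervene, so they become identities in the rank-two subalgebra. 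At the level of the Weyl group they reflect the fact that the two reduced expressions of the longest element $w_0$ give $w s_c=s_j w=w_0$ for $w=\underbrace{s_is_j\cdots}_{m-1}$, whence $w(\alpha_c)=\alpha_j$.

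Next I would verify the root-vector identities case by case. For $a_{ij}=0$ they are immediate, since $\ad e_i$ and $\ad f_i$ annihilate $e_j$ and $f_j$, so $T_i(e_j)=e_j$ and $T_i(f_j)=f_j$. For $a_{ij}=-1$ they are exactly Proposition~\ref{prop:iji_pre} (with $x^{\pm}$ read as $e,f$). For $a_{ij}=-2$ and $a_{ij}=-3$ I would expand each exponential using the divided-power notation $(\ad X)^{(n)}$; the Serre relations give the nilpotency bound $(\ad e_i)^{1-a_{ij}}(e_j)=0$, so only terms up to $(\ad\,\cdot\,)^{(2)}$ occur in type $B_2$ and up to $(\ad\,\cdot\,)^{(3)}$ in type $G_2$. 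One then computes $g(e_c)$ and $g(f_c)$ by applying the factors of $g$ one at a time, tracking the image through the positive root vectors of the rank-two system until it returns to $e_j$, respectively $f_j$.

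The hard part will be the case $a_{ij}=-3$ (type $G_2$, $m=6$): here $g=T_iT_jT_iT_jT_i$ is a fivefold product and the computation passes through all six positive roots with divided powers up to third order, so the bookkeeping of the intermediate root vectors and their scalar coefficients is delicate. No new difficulty specific to the Yangian appears, however: the conjugation formula makes the passage from $U(\frakg)$ to $Y(\frakg)$ automatic, so the entire content is the classical rank-two computation. As a shortcut one could instead quote the classical fact that $\{T_i\}$ satisfy the braid relations on $U(\frakg)$ to obtain $g(e_c)=e_j$ and $g(f_c)=f_j$ in $U(\frakg)$, and then lift these identities to $Y(\frakg)$ verbatim via the conjugation formula.
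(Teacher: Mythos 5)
Your proposal is correct and follows essentially the same route as the paper: the paper's proof likewise reduces each braid relation, via the conjugation formula $\varphi\circ\exp\ad X\circ\varphi^{-1}=\exp\ad\varphi(X)$, to exactly the rank-two identities you list ($T_i(e_j)=e_j$ for $a_{ij}=0$, $T_iT_j(e_i)=e_j$ for $a_{ij}=-1$, $T_iT_jT_i(e_j)=e_j$ for $a_{ij}=-2$, $T_iT_jT_iT_jT_i(e_j)=e_j$ for $a_{ij}=-3$, and the same with $e$ replaced by $f$), and, like you, it leaves the verification of these classical identities on the Chevalley generators unelaborated.
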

\begin{proof}The identities follow from
\begin{alignat*}{4}
& T_i(e_j) = e_j, \qquad && T_i(f_j) = f_j, \qquad && a_{ij}=0,& \\
& T_i T_j (e_i) = e_j, \qquad && T_i T_j (f_i) = f_j, \qquad && a_{ij}=-1,& \\
& T_i T_j T_i (e_j) = e_j, \qquad && T_i T_j T_i (f_j) = f_j, \qquad && a_{ij}=-2,& \\
& T_i T_j T_i T_j T_i (e_j) = e_j, \qquad && T_i T_j T_i T_j T_i (f_j) = f_j, \qquad && a_{ij}=-3 &
\end{alignat*}
as in the proof of Proposition~\ref{prop:braid}.
\end{proof}

\begin{prop}We have
\begin{gather*}
T_i\big(x_j^{\pm}\big) = \begin{cases}
-x_i^{\mp} & \text{if $i=j$},\\
\big({\pm} \ad d_i^{-1/2}x_i^{\pm}\big)^{(-a_{ij})}\big(x_j^{\pm}\big) & \text{if $a_{ij}<0$},\\
x_j^{\pm} & \text{if $a_{ij}=0$},
\end{cases}\qquad
T_i(h_j) = \begin{cases}
-h_i & \text{if $i=j$},\\
h_j - a_{ij} h_i & \text{if $a_{ij}<0$},\\
h_j & \text{if $a_{ij}=0$},
\end{cases}
\\
T_i\big(x_{j,1}^{\pm}\big) = \begin{cases}
-x_{i,1}^{\mp} + \tfrac{\hbar}{2}\big\{ h_i, x_i^{\mp} \big\} & \text{if $i=j$},\\
\big({\pm} \ad d_i^{-1/2} x_i^{\pm}\big)^{(-a_{ij})}\big(x_{j,1}^{\pm}\big) & \text{if $a_{ij} <0 $},\\
x_{j,1}^{\pm} & \text{if $a_{ij}=0$}.
\end{cases}
\end{gather*}
\end{prop}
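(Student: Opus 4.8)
The plan is to reduce every assertion to the representation theory of the $\fraksl_2$-triple attached to $i$. First I would note that $e_i$, $f_i$ and $\check h_i := d_i^{-1}h_i$ satisfy $[\check h_i, e_i]=2e_i$, $[\check h_i, f_i]=-2f_i$, $[e_i,f_i]=\check h_i$, so they span a copy of $\fraksl_2$ acting on $Y(\frakg)$ by $\ad$. I would then invoke the standard fact that on a finite-dimensional $\fraksl_2$-module the operator $\exp\ad e_i\,\exp\ad(-f_i)\,\exp\ad e_i$ realises the nontrivial Weyl element, sending a lowest weight vector $v$ of weight $-n$ to $(\ad e_i)^{(n)}(v)$ and a highest weight vector of weight $n$ to $(-\ad f_i)^{(n)}(v)$; this is the same mechanism underlying Proposition~\ref{prop:formula1}.

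The cases $a_{ij}=0$ are immediate: the Serre relation then reads $[x_{i,r}^\pm, x_{j,s}^\pm]=0$ for all $r,s$, and $[x_i^\pm, x_{j,s}^\mp]=0$ since $i\neq j$, so $\ad e_i$ and $\ad f_i$ annihilate $x_{j,0}^\pm$ and $x_{j,1}^\pm$ and $T_i$ fixes them. The cases $a_{ij}<0$ I would handle uniformly in degree zero and degree one, the key observation being that $x_{j,0}^+$ and $x_{j,1}^+$ are both lowest weight vectors for the adjoint $\fraksl_2$: each is annihilated by $\ad f_i$ because $[x_{j,s}^+, x_i^-]=\delta_{ij}h_\ast=0$, each has $\check h_i$-weight $d_i^{-1}\cdot d_i a_{ij}=a_{ij}$, and $(\ad x_i^+)^{1-a_{ij}}(x_{j,s}^+)=0$. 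For $s=0$ this last identity is the Serre relation; for $s=1$ it follows by specialising the degree-one Serre relation to $r_1=\dots=r_{1-a_{ij}}=0$ and $s=1$, whereupon the symmetrised sum reduces to $(1-a_{ij})!\,(\ad x_i^+)^{1-a_{ij}}(x_{j,1}^+)$. Hence $x_{j,s}^+$ generates the irreducible $(1-a_{ij})$-dimensional $\fraksl_2$-module, and the braid operator carries it to $(\ad e_i)^{(-a_{ij})}(x_{j,s}^+)$, the asserted formula; the $-$ statements follow identically, the relevant vectors being highest weight and killed by $\ad e_i$, or by applying the anti-automorphism $\omega$ as in Lemma~\ref{lem:rho_omega_and_T}. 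The formula for $T_i(h_j)$ I would read off from $h_j=[x_j^+, x_j^-]$ together with the formulas just proved for $T_i(x_j^\pm)$, the resulting bracket collapsing to $h_j-a_{ij}h_i$ exactly as in the $a_{ij}=-1$ computation of the main text.

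The one case lying outside pure $\fraksl_2$-representation theory, and which I expect to be the main obstacle, is $i=j$ in degree one: the answer $-x_{i,1}^\mp+\tfrac{\hbar}{2}\{h_i, x_i^\mp\}$ carries a genuine Yangian correction and $x_{i,1}^+$ is no longer extremal. Here I would pass to the rank-one subalgebra generated by the index-$i$ generators; rescaling $x_{i,1}^\pm$ by $d_i^{-1/2}$ and $h_{i,1}$ by $d_i^{-1}$ turns these into the generators of the one-parameter Yangian of $\fraksl_2$ with parameter $d_i\hbar$ (the Kac--Moody relations here carry no $m_{ij}$ term, so no $\ve_1-\ve_2$ asymmetry intervenes). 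The computation of $T_i$ on $x_{i,1}^\pm$ is then formally identical to the $i=j$ computation in the proof of Proposition~\ref{prop:formula2}, and a short check shows that all powers of $d_i$ cancel, recovering the stated $d_i$-free formula. The care required is precisely in tracking these normalisation factors and in the degree-one Serre collapse above; everything else is an instance of the classical $\fraksl_2$ computation.
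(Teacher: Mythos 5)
Your proposal is correct and follows essentially the same route as the paper: the $a_{ij}<0$ cases are reduced to the classical $\mathfrak{sl}_2$ computation applied to the extremal vectors $x_{j,s}^{\pm}$ (where you cite the standard Weyl-element fact, the paper verifies the collapse of $\exp\ad e_i\exp\ad(-f_i)$ by an explicit binomial induction, and both arguments rest on the degree-one Serre relation with all inner indices zero), while the $i=j$ degree-one case is the rank-one computation of Proposition~\ref{prop:formula2} with the $d_i$-normalisations tracked. Your explicit identification of the degree-one Serre specialisation and of the $d_i$-cancellation is precisely the content the paper compresses into ``verified in a very similar way.''
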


\begin{proof} The formulas for $T_i\big(x_j^{\pm}\big)$, $T_i(h_j)$ are deduced from well-known formulas for the Chevalley generators. We produce a computation of $T_i\big(x_j^{-}\big)$ for $a_{ij}<0$ since the case $T_i\big(x_{j,1}^{-}\big)$ for $a_{ij}<0$ is verified in a very similar way. Put $m = -a_{ij}$. We have
\begin{gather}
T_i(x_{j}^{-}) = \exp\ad e_i \exp\ad(-f_i) \big(x_{j}^{-}\big) = \sum_{n=0}^{m} \dfrac{(-1)^n}{n!} \exp\ad e_i (\ad f_i)^n\big(x_{j}^{-}\big).\label{eq:n!}
\end{gather}
We can prove
\begin{gather}
\exp\ad e_i (\ad f_i)^n \big(x_j^-\big) = \sum_{k=0}^n \begin{binom}
n
k
\end{binom} \left(\prod_{l=k}^{n-1} (m-l)\right) (\ad f_i)^k\big(x_j^-\big) \label{eq:binom}
\end{gather}
by induction on $n$. Indeed, if we assume the assertion for $n$, we have
\begin{gather}
\exp\ad e_i (\ad f_i)^{n+1} \big(x_j^-\big) = \ad \left( \exp\ad e_i(f_i) \right) \big( \exp\ad e_i (\ad f_i)^{n} \big(x_j^-\big) \big)\label{eq:inductive}\\
\hphantom{\exp\ad e_i (\ad f_i)^{n+1} \big(x_j^-\big)}{} =\ad\big(f_i + d_i^{-1}h_i - e_i\big) \left( \sum_{k=0}^n
\begin{binom}
n
k
\end{binom} \left(\prod_{l=k}^{n-1} (m-l)\right) (\ad f_i)^k\big(x_j^-\big) \right). \nonumber
\end{gather}
Since we have
\begin{gather*}
\ad h_i (\ad f_i)^k\big(x_j^-\big) = d_i(-2k+m) (\ad f_i)^k\big(x_j^-\big)
\end{gather*}
and
\begin{gather*}
\ad e_i (\ad f_i)^k\big(x_j^-\big) = \sum_{s=0}^{k-1} (\ad f_i)^{k-s-1} \big(\ad d_i^{-1} h_i\big) (\ad f_i)^s\big(x_j^-\big) \\
\hphantom{\ad e_i (\ad f_i)^k\big(x_j^-\big)}{} = \sum_{s=0}^{k-1} (-2s+m) (\ad f_i)^{k-1}\big(x_j^-\big) = k(m-k+1) (\ad f_i)^{k-1}\big(x_j^-\big),
\end{gather*}
the coefficient of $(\ad f_i)^k\big(x_j^-\big)$ in (\ref{eq:inductive}) is
\begin{gather*}
\begin{binom}
n
{k-1}
\end{binom} \left(\prod_{l=k-1}^{n-1} (m-l)\right)
+
\begin{binom}
n
{k}
\end{binom} \left( \prod_{l=k}^{n-1} (m-l) \right) (m-2k)\\
\qquad {}-
\begin{binom}
n
{k+1}
\end{binom} \left( \prod_{l=k+1}^{n-1} (m-l) \right) (k+1)(m-k).
\end{gather*}
It is easy to see that this is equal to
\begin{gather*}
\begin{binom}
{n+1}
{k}
\end{binom} \left( \prod_{l=k}^{n} (m-l) \right).
\end{gather*}
Thus (\ref{eq:binom}) is proved. Then~(\ref{eq:n!}) is equal to
\begin{gather*}
 \sum_{n=0}^{m} \dfrac{(-1)^n}{n!} \sum_{k=0}^n \begin{binom}
n
k
\end{binom} \left(\prod_{l=k}^{n-1} (m-l)\right) (\ad f_i)^k\big(x_j^-\big) \\
 \qquad {} = \sum_{k=0}^m \dfrac{(-1)^k}{k!} (\ad f_i)^k\big(x_j^-\big) \times \left( \sum_{n=k}^{m} (-1)^{n-k} \begin{binom}
{m-k}
{n-k}
\end{binom} \right) = \dfrac{(-1)^m}{m!} (\ad f_i)^m\big(x_j^-\big).
\end{gather*}

The formula for $T_i\big(x_{i,1}^-\big)$ is proved in a way similar to Proposition~\ref{prop:formula2}. We use
\begin{gather*}
\exp\ad e_i\big(x_{i,1}^+\big) = x_{i,1}^+ - d_i^{1/2} \hbar \big(x_i^+\big)^2, \\
\exp\ad e_i \big(x_{i,1}^-\big) = x_{i,1}^- + d_i^{-1/2} h_{i,1} - x_{i,1}^+ - \tfrac{\hbar}{2}\big\{h_i,x_i^+\big\} + d_i^{1/2} \hbar \big(x_i^+\big)^2, \\
\exp\ad e_i \big(\big\{h_i,x_i^+\big\}\big) = \big\{h_i,x_i^+\big\} - 4d_i^{1/2}\big(x_i^+\big)^2, \\
\exp\ad e_i \big(\big(x_i^+\big)^2\big) = \big(x_i^+\big)^2,
\\
\exp\ad (-f_i)\big(x_{i,1}^+\big)= x_{i,1}^+ + d_i^{-1/2} h_{i,1} - x_{i,1}^- - \tfrac{\hbar}{2}\big\{h_i,x_i^-\big\} + d_i^{1/2} \hbar \big(x_i^-\big)^2, \\
\exp\ad (-f_i)\big(x_{i,1}^-\big) = x_{i,1}^- - d_i^{1/2} \hbar \big(x_i^-\big)^2, \\
\exp\ad (-f_i)(h_{i,1}) = h_{i,1} - 2 d_i^{1/2} x_{i,1}^- - d_i^{1/2} \hbar \big\{h_i,x_i^-\big\} + 3 d_i \hbar \big(x_i^-\big)^2, \\
\exp\ad (-f_i)\big(\big\{h_{i},x_i^+\big\}\big) = \big\{h_i,x_i^+\big\} - 3 \big\{h_i,x_i^-\big\} - 2d_i^{1/2}\big\{x_i^+,x_i^-\big\} \\
\hphantom{\exp\ad (-f_i)\big(\big\{h_{i},x_i^+\big\}\big) =}{} + 2d_i^{-1/2} h_i^2 + 4d_i^{1/2}\big(x_i^-\big)^2, \\
\exp\ad (-f_i)\big(\big(x_i^+\big)^2\big) = \big(x_i^+\big)^2 + d_i^{-1} h_i^2 + \big(x_i^-\big)^2 + d_i^{-1/2} \big\{h_i,x_i^+\big\} \\
\hphantom{\exp\ad (-f_i)\big(\big(x_i^+\big)^2\big) =}{} - d_i^{-1/2} \big\{h_i,x_i^-\big\} - \big\{x_i^+,x_i^-\big\}.
\end{gather*}

As we mentioned, $T_i\big(x_{j,1}^-\big)$ for $a_{ij}<0$ is computed by replacing $x_j^-$ with $x_{j,1}^-$ in the argument for $T_i\big(x_{j}^-\big)$. Then the formulas for $T_i\big(x_{j,1}^+\big)$ are obtained from those for $T_i\big(x_{j,1}^-\big)$ by applying~$\omega$.
\end{proof}

\begin{rem} In this appendix we impose no further condition on $(a_{ij})_{i,j \in I}$, and hence on $\frakg$, to study the braid group action. However the defining relations of $Y(\frakg)$ given here may not produce a correct definition of Yangian for some generalized Cartan matrix. For example, it is known that for $Y(\hat{\mathfrak{sl}}_2)$ the defining relations should be modified as in \cite[Definition~5.1]{MR3898327} or \cite[Section~1.2]{MR3850574} (In~\cite{MR3898327}, some relations are missing). In \cite[equation~(2.1)]{GRW}, the authors suggest the following condition:
\begin{gather*}
\text{for all $i$ and $ j$, if $a_{ij} \leq -2$ then $a_{ji}=-1$ holds}.
\end{gather*}
See \cite[Section~2, Lemma~4.2, Remark~5.15]{GRW} for an explanation on some evidences.
\end{rem}

\subsection{Compatibility with the coproduct}

Assume that $\frakg$ is a Kac--Moody Lie algebra of finite or affine type except for $A_1^{(1)}$ and $A_2^{(2)}$. Then the coproduct $\Delta$ on $Y(\frakg)$ is well-defined by the same formula as in Theorem~\ref{thm:definition_of_coproduct} \cite[Definition~4.6, Theorem~4.9, Proposition~5.18]{MR3861718}. Let us prove the main result of this appendix.
\begin{prop}\label{prop:compatibility_coproduct2} We have $\Delta \circ T_i = (T_i \otimes T_i) \circ \Delta$.
\end{prop}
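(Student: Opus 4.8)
The plan is to mirror the proof of Proposition~\ref{prop:compatibility_coproduct}, now allowing $a_{ij}\in\{-1,-2,-3\}$. Both $\Delta\circ T_i$ and $(T_i\otimes T_i)\circ\Delta$ are algebra homomorphisms $Y(\frakg)\to Y(\frakg)\widehat{\otimes}Y(\frakg)$, so it suffices to check the identity on the generators $x_j^{\pm}$, $h_j$, $x_{j,1}^+$. On $x_j^{\pm}$ and $h_j$ both sides equal $\square\,T_i(\cdot)$, so the only substantial case is $x_{j,1}^+$. First I would record the general analogue of Lemma~\ref{lem:Omega}, namely $(T_i\otimes T_i)\Omega_+=\Omega_++x_i^+\otimes x_i^--x_i^-\otimes x_i^+$; its proof is word for word the same, since $(\,,\,)$ is $T_i$-invariant, $T_i$ sends a root vector for $\alpha$ to one for $s_i(\alpha)$, and only the pair $\pm\alpha_i$ contributes a correction.

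The key step is a reduction of the case $a_{ij}=-m<0$ to a classical identity. Writing $T_i(x_{j,1}^+)=(\ad e_i)^{(m)}(x_{j,1}^+)$ and using $\Delta(e_i)=\square(e_i)$ together with the homomorphism property of $\Delta$, we get $\Delta T_i(x_{j,1}^+)=(\ad\square(e_i))^{(m)}\Delta(x_{j,1}^+)$. Since $\square([X,Y])=[\square(X),\square(Y)]$ yields $\square\big((\ad X)^{(m)}Y\big)=(\ad\square(X))^{(m)}\square(Y)$, the $\square(x_{j,1}^+)$-parts of the two sides already agree, and the claim collapses to the single equation
\begin{gather*}
(\ad\square(e_i))^{(m)}\big[1\otimes x_j^+,\Omega_+\big]=\big[1\otimes T_i(x_j^+),(T_i\otimes T_i)\Omega_+\big].
\end{gather*}
Expanding the left-hand side by the divided-power Leibniz rule and using $(\ad\square(e_i))^{(p)}(1\otimes x_j^+)=1\otimes(\ad e_i)^{(p)}x_j^+$, one sees that only $q=0,1,2$ survive in the sum, because $(\ad\square(e_i))^{(q)}\Omega_+=0$ for $q\geq3$; the nonzero pieces are $\Omega_+$, the term $[\square(e_i),\Omega_+]=-e_i\otimes h_i$ of \cite[Lemma~4.2]{MR3861718}, and $(\ad\square(e_i))^{(2)}\Omega_+$, which is proportional to $e_i\otimes e_i$. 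The entire identity thereby becomes a statement in $U(\frakg)\otimes U(\frakg)$.

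Explicitly, after cancelling the $q=0$ term the identity to verify is
\begin{gather*}
\big[1\otimes(\ad e_i)^{(m-1)}x_j^+,\,[\square(e_i),\Omega_+]\big]+\big[1\otimes(\ad e_i)^{(m-2)}x_j^+,\,(\ad\square(e_i))^{(2)}\Omega_+\big]\\
=\big[1\otimes(\ad e_i)^{(m)}x_j^+,\,x_i^+\otimes x_i^--x_i^-\otimes x_i^+\big],
\end{gather*}
which I would check for $m=1,2,3$ using the explicit Casimir data above and the Serre relation $(\ad e_i)^{(m+1)}x_j^+=0$. The remaining cases are routine: for $a_{ij}=0$ the identity reduces to $[1\otimes x_j^+,\,x_i^+\otimes x_i^--x_i^-\otimes x_i^+]=0$, immediate since $x_i^{\pm}$ commute with $x_j^+$, and for $i=j$ one repeats verbatim the Casimir manipulation of Proposition~\ref{prop:compatibility_coproduct}. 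The main obstacle is the verification of the displayed identity for $m=2$ and $m=3$, which is genuinely new relative to the affine type A argument: there the higher divided powers $(\ad e_i)^{(2)}$, $(\ad e_i)^{(3)}$ and the quadratic term $(\ad\square(e_i))^{(2)}\Omega_+\propto e_i\otimes e_i$ first contribute, whereas for $m=1$ the equation recovers the computation already made in the main text. I expect the exclusion of $A_1^{(1)}$ and $A_2^{(2)}$ to enter only through the well-definedness of $\Delta$ and to play no role in this final check.
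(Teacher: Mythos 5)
Your proposal is correct and follows essentially the same route as the paper: reduce to the generator $x_{j,1}^+$, use the general form of Lemma~\ref{lem:Omega}, expand $(\ad\square(e_i))^{(-a_{ij})}$ by the divided-power Leibniz rule, and observe that $(\ad\square(e_i))^{(q)}\Omega_+$ vanishes for $q\geq 3$ so that only the $-e_i\otimes h_i$ and $e_i\otimes e_i$ terms contribute. The only cosmetic difference is that you propose to verify the resulting finite identity separately for $m=1,2,3$, whereas the paper closes it with a single uniform computation of $[T_i(x_j^+),x_i^{\mp}]$ valid for all $a_{ij}<0$.
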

\begin{proof}Lemma~\ref{lem:Omega} holds in a general situation and we use it. It is enough to prove $\Delta T_i\big(x_{j,1}^+\big) = (T_i \otimes T_i)\Delta\big(x_{j,1}^+\big)$ for $a_{ij} < 0$ since the proofs concerning the other generators are the same as in the proof of Proposition~\ref{prop:compatibility_coproduct}. We claim that the both sides coincide with
\begin{gather*}
\square\,T_i(x_{j,1}^+)-\hbar \big( \big[1 \otimes T_i\big(x_j^+\big),\Omega_+\big] - d_i^{1/2}x_i^+ \otimes (\ad e_i)^{(-a_{ij}-1)}\big(x_j^+\big) \big).
\end{gather*}
The left-hand side is
\begin{gather*}
\Delta \big( (\ad e_i)^{(-a_{ij})} \big(x_{j,1}^+\big)\big) = (\ad \square(e_i))^{(-a_{ij})} \big(\square\big(x_{j,1}^+\big) - \hbar\big[1 \otimes x_j^+,\Omega_+\big]\big) \\
= \square\,T_i\big(x_{j,1}^+\big) - \hbar\bigg( \big[(\ad \square(e_i))^{(-a_{ij})}\big(1 \otimes x_j^+\big),\Omega_+\big] \\
\quad{} + \sum_{n=1}^{-a_{ij}} \big[(\ad \square(e_i))^{(-a_{ij}-n)}\big(1 \otimes x_j^+\big),(\ad \square(e_i))^{(n)}(\Omega_+)\big]\bigg) \\
= \square\,T_i\big(x_{j,1}^+\big) - \hbar\bigg( \big[1 \otimes T_i\big(x_j^+\big),\Omega_+\big] + \sum_{n=1}^{-a_{ij}} \big[1 \otimes (\ad e_i)^{(-a_{ij}-n)}\big(x_j^+\big),(\ad \square(e_i))^{(n)}(\Omega_+)\big]\bigg).
\end{gather*}
We show
\begin{gather*}
\sum_{n=1}^{-a_{ij}} \big[1 \otimes (\ad e_i)^{(-a_{ij}-n)}\big(x_j^+\big),(\ad \square(e_i))^{(n)}(\Omega_+)\big] = - d_i^{1/2} x_i^+ \otimes (\ad e_i)^{(-a_{ij}-1)}\big(x_j^+\big).
\end{gather*}
Since we have
\begin{gather*}
(\ad \square(e_i))^{(n)}(\Omega_+) = \begin{cases}
-d_i^{-1/2} x_i^+ \otimes h_i & \text{if $n=1$,}\\
x_i^+ \otimes x_i^+ & \text{if $n=2$,}\\
0& \text{if $n \geq 3$,}
\end{cases}
\end{gather*}
we see
\begin{gather*}
\begin{split}&
\sum_{n=1}^{-a_{ij}} \big[1 \otimes (\ad e_i)^{(-a_{ij}-n)}\big(x_j^+\big),(\ad \square(e_i))^{(n)}(\Omega_+)\big] \\
& \qquad{} = \big[1 \otimes x_j^+, -d_i^{-1/2}x_i^+ \otimes h_i\big] = - d_i^{1/2} x_i^+ \otimes x_j^+\end{split}
\end{gather*}
if $a_{ij}=-1$, and
\begin{gather*}
\sum_{n=1}^{-a_{ij}} \big[1 \otimes (\ad e_i)^{(-a_{ij}-n)}\big(x_j^+\big),(\ad \square(e_i))^{(n)}(\Omega_+)\big] \\
\qquad{} = \big[1 \otimes (\ad e_i)^{(-a_{ij}-1)}\big(x_j^+\big), -d_i^{-1/2}x_i^+ \otimes h_i\big] + \big[1 \otimes (\ad e_i)^{(-a_{ij}-2)}\big(x_j^+\big),x_i^+ \otimes x_i^+\big] \\
\qquad{}= d_i^{-1/2}\big( 2d_i(-a_{ij}-1) + d_ia_{ij} \big) x_i^+ \otimes (\ad e_i)^{(-a_{ij}-1)}\big(x_j^+\big) \\
\qquad\quad{}- d_i^{1/2} ( -a_{ij}-1 ) x_i^+ \otimes (\ad e_i)^{(-a_{ij}-1)}\big(x_j^+\big) \\
\qquad {}= - d_i^{1/2} x_i^+ \otimes (\ad e_i)^{(-a_{ij}-1)}\big(x_j^+\big)
\end{gather*}
if $a_{ij} \leq -2$, hence the claim holds. The right-hand side is
\begin{gather*}
(T_i \otimes T_i)\big(\square\big(x_{j,1}^+\big) - \hbar\big[1 \otimes x_j^+,\Omega_+\big]\big) = \square\,T_i\big(x_{j,1}^+\big) - \hbar \big[1 \otimes T_i\big(x_j^+\big),\big(T_i \otimes T_i\big) \Omega_+\big]\\
\qquad{} = \square\,T_i\big(x_{j,1}^+\big) - \hbar \big[1 \otimes T_i\big(x_j^+\big),\Omega_+ + x_i^+ \otimes x_i^- - x_i^- \otimes x_i^+\big]\\
\qquad{} = \square\,T_i\big(x_{j,1}^+\big) - \hbar \big( \big[1 \otimes T_i\big(x_j^+\big),\Omega_+\big] + x_i^+ \otimes \big[T_i\big(x_j^+\big), x_i^-\big] - x_i^- \otimes \big[T_i(x_j^+),x_i^+\big]\big).
\end{gather*}
Then the desired identity follows from
\begin{gather*}
\big[T_i(x_j^+), x_i^-\big] = \left[\dfrac{1}{(-a_{ij})!} (\ad e_i)^{-a_{ij}}\big(x_j^+\big),d_i^{1/2} f_i\right]\\
\hphantom{\big[T_i(x_j^+), x_i^-\big]}{} = \dfrac{1}{(-a_{ij})!} d_i^{1/2} \sum_{n=0}^{-a_{ij}-1} (\ad e_i)^{-a_{ij}-n-1} \big(\ad d_i^{-1} h_i\big) (\ad e_i)^{n} \big(x_j^+\big) \\
\hphantom{\big[T_i(x_j^+), x_i^-\big]}{} = \dfrac{1}{(-a_{ij})!} d_i^{1/2} \sum_{n=0}^{-a_{ij}-1} (2n+a_{ij}) (\ad e_i)^{-a_{ij}-1} \big(x_j^+\big) \\
\hphantom{\big[T_i(x_j^+), x_i^-\big]}{} = \dfrac{1}{(-a_{ij})!} d_i^{1/2} a_{ij} (\ad e_i)^{-a_{ij}-1} \big(x_j^+\big)
 = -d_i^{1/2}(\ad e_i)^{(-a_{ij}-1)}\big(x_j^+\big)
\end{gather*}
and
\begin{gather*}
\big[T_i(x_j^+), x_i^+\big] = \left[\dfrac{1}{(-a_{ij})!} (\ad e_i)^{-a_{ij}}\big(x_j^+\big),d_i^{1/2} e_i\right] = -\dfrac{1}{(-a_{ij})!} d_i^{1/2} (\ad e_i)^{-a_{ij}+1}\big(x_j^+\big)=0.\!\!\!\!\tag*{\qed}
\end{gather*} \renewcommand{\qed}{}
\end{proof}

\subsection*{Acknowledgments}
The author would like to thank Yoshihisa Saito for suggesting him to study braid group action on affine Yangian. Discussions with him improved contents of this paper. He also thanks Nicolas Guay for telling him the reference \cite{MR1745710} and the referees for helpful comments. This work was supported by JSPS KAKENHI Grant Number 26287004, 17H06127, 18K13390, and The Kyoto University Foundation.

\pdfbookmark[1]{References}{ref}
\LastPageEnding

\end{document}